\newtheorem{theorem}{Theorem}[section]
\newtheorem{corollary}[theorem]{Corollary}
\newtheorem{proposition}[theorem]{Proposition}
\newtheorem*{theoremA}{Theorem A}
\newtheorem*{theoremB}{Theorem B}
\newtheorem*{theoremC}{Theorem C}
\newtheorem*{theoremD}{Theorem D}
\theoremstyle{definition}
\newtheorem{definition}[theorem]{Definition}
\theoremstyle{remark}
\newtheorem{remark}[theorem]{Remark}
\begin{document}

\title[Fluctuations of ergodic sums on periodic orbits]{Fluctuations of Ergodic Sums on Periodic Orbits under Specification}

\author{Manfred Denker}
\address{The Pennsylvania State University}

\author{Samuel Senti}
\address{Universidade Federal do Rio de Janeiro}

\author{Xuan Zhang}
\address{Universidade de S\~ao Paulo}
%\thanks

\subjclass[2010]{37A50, 37B99, 60F05}

\begin{abstract}
We study the fluctuations of ergodic sums using global and local specifications on periodic points. We obtain Lindeberg-type central limit theorems in both situations. As an application, when the system possesses a unique measure of maximal entropy, we show weak convergence of ergodic sums to a mixture of normal distributions. Our results suggest decomposing the variances of ergodic sums according to global and local sources.
\end{abstract}
\maketitle

\section{Fluctuations for ergodic sums}\label{sec:1}

The central limit theorem (CLT) for Gibbs measures was first observed by Bowen in \cite{Bow75}, although this was in fact preceded by earlier work on this subject (see \cite{Den89} for more details). Since then a vast amount of research on the CLT  in dynamical systems has appeared.
In most cases this law is shown to hold for H\"older continuous obervables or functions of bounded variation. This is due to the fact that, in the majority of cases, one can show a sufficiently rapid (summable) decay of correlations which entails  the CLT by Gordin's martingale-coboundary decomposition (\cite{Gor69}).

However, all but one of the early attempts to prove such laws in dynamics used mixing concepts which also lead to the Gaussian limit law in cases of functions with $2+\delta$ moments, which are sufficiently well approximated by an underlying probabilistic mixing structure (cf. \cite{Bra07} for basic definitions of probabilistic mixing conditions). This method allows formulating CLT's for non-continuous functions which merely need to be sufficiently well approximable (in $L^2$, or in probability) (see \cite{DenKel83} for an example of such an approach).
This mixing structure is simple when the transfer operator has a spectral gap (see \cite{Rou83} for a first result of this type), but it is not of the type considered in probability.
Extensions of the spectral gap and of the mixing methods have appeared, allowing the treatment of intermittent maps (see \cite{AarDenUrb93} for one of the first results) via tower constructions  (inducing on Darling-Kac sets, Schweiger's jump transformation, or recently  Young towers) as well as maps with slow decay of correlations.
We cite \cite{Gou04, GouMel14, HayNicVaiZha13, Liv95, Tho14, Tho15} for an incomplete list of some recent articles, including an example of a CLT for general arrays, as is presented here.

Our methods differ from the probabilistic mixing approach (see  \cite{Bra07, Dou94, IbrLin71}) as well as from the transfer operator method (see \cite{HenHer01} for a general approach).
We build upon the specification properties on periodic points, which were first introduced by Bowen (see \cite{Bow71}).
The use of periodic points has many conveniences, for example any invariant measure can be approximated by a distribution on the periodic points (see \cite{Sig70}) and periodic points can be used towards a numerical understanding of the dynamics (see \cite{DenDuaMcC09}). But it is worth noting that our results  do not depend heavily on the existence of periodic points. Specification properties using other sets of points are likely to work in the same way.
In particular, when choosing weighted point measures to approximate equilibrium states which are subject to specification, it is possible to study the fluctuation of ergodic sums  under equilibrium measures; this applies especially to maps of the interval with a Markov structure.

The classical CLTs in dynamics study the fluctuation of partial sums of  H\"older functions around their means globally using a global mixing concept (including spectral gaps), and there does not seem to be any other significant source of fluctuation.  Our results show that local randomness can also contribute to fluctuation. Hence it is important to investigate the sources of the fluctuations. We suggest a decomposition into  \textit{global}  and \textit{local} fluctuations of Ces\'aro averages and study the CLTs in this context. This description still allows for an unknown source of fluctuation which may arise from the non-uniform distribution of periodic orbits.

To set the stage, we choose to consider dynamical systems $(X,T)$ exhibiting either \textit{global} or \textit{local} specification. The specification is called global if the concatenation of any number of orbit pieces of a given length can be shadowed by a single periodic orbit, provided sufficient time (referred to as a \textit{gap}) is allowed to migrate from the end of one piece of orbit to the next (see Definition~\ref{def:spec}). The specification is called {local} when orbits that are close enough at their initial and terminal time periods can by shadowed by periodic points (see Definition~\ref{def:specloc}).

Thanks to the relation between periodic orbits given by the specification properties, one can pick out certain sets of periodic points and compare uniform measures on them with product measures to capture an independence-like structure (assuming that  there are only finitely many periodic points of any given period).
We call these \textit{globally or locally $\epsilon$-independent sets} (see definitions~\ref{def:independence} and \ref{def:locidp}).
This {global} structure allows for the study of the distribution of Birkhoff sums with gaps, i.e. incomplete time series, whereas the {local} version allows one to infer the distribution of the Birkhoff sums from the readings taken only at certain locations in the phase space.

In the global scenario, $\epsilon$-independence structure is constructed on the orbit pieces outside the gaps. To obtain a limit law it would be necessary to require certain condition on the observables over the gaps. We call this type of condition a \textit{gap condition}:
{the sum of the variances along the gaps is negligible when compared to the sum of the variances along the orbit pieces}. After neglecting all the gaps, an \textit{oscillation condition} on the observables ensures us to treat the measures on $\epsilon$-independent sets like product measures: {the first and second moments of the oscillation of the observable along the orbit pieces is negligible when compared to the sum of the variances.} Our first main theorem is a Lindeberg-type CLT for systems with global specification property (see Theorem~\ref{theo:1}).

\begin{theoremA}
	Consider a dynamical system $(X, T)$ with \emph{global} specification. Then along a sequence of $\epsilon_l$-independent sets $\mathcal P_{l}$ the following holds:
	 given observables $h_l$ satisfying the oscillation condition~\eqref{eq:4.2-0} and the gap condition~\eqref{eq:4.2-1}, if
	the Lindeberg condition~\eqref{eq:epslind} holds with respect to the uniform measure $\nu_{\mathcal P_{l}}$ on $\mathcal P_{l}$, then the central limit theorem holds:
	$$ \lim_{l\to\infty} \nu_{\mathcal P_{l}}\left(\bigg\{x\in X: \sum_{j=0}^l\Big(h_l(T^jx)-\mathbb{E}_{\nu_{\mathcal P_{l}}}(h_l)\Big)\leqslant ts_l\bigg\}\right) =\frac 1{\sqrt{2\pi}} \int_{-\infty}^t e^{-u^2/2} du$$
	where $s_l$ denotes a suitably-defined total variance. Additionally, the reverse  holds true under a uniform oscillation condition~\eqref{eq:4.2-00}.
\end{theoremA}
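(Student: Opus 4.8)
The plan is to transform the ergodic sum under the non-product measure $\nu_{\mathcal P_l}$ into a sum of genuinely independent random variables through a sequence of controlled approximations, and then to apply the Lindeberg--Feller central limit theorem for triangular arrays. Write $S_l(x)=\sum_{j=0}^l\big(h_l(T^jx)-\mathbb E_{\nu_{\mathcal P_l}}h_l\big)$ and decompose the time window $\{0,\dots,l\}$ into the orbit pieces and the gaps furnished by the specification and the $\epsilon_l$-independence structure of $\mathcal P_l$.

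First I would discard the gaps: the gap condition~\eqref{eq:4.2-1}, in conjunction with the (near) independence of the blocks, makes the part of $S_l$ supported on the gaps negligible in $L^2(\nu_{\mathcal P_l})$ relative to $s_l$, so by Chebyshev's inequality and a Slutsky-type argument it suffices to prove the CLT for the part $\widetilde S_l$ carried by the orbit pieces. Next, on each orbit piece I would replace $h_l\circ T^j$ by its value at a fixed representative of the $\epsilon_l$-independence cell containing $x$; the oscillation condition~\eqref{eq:4.2-0} bounds the first and second moments of the resulting error, so the same Chebyshev estimate shows this replacement changes $\widetilde S_l/s_l$ by a quantity tending to $0$ in $\nu_{\mathcal P_l}$-probability. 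After these two reductions the contribution $Y_{l,i}$ of the $i$-th orbit piece depends on $x$ only through its $i$-th coordinate in the finite partition underlying $\epsilon_l$-independence, and $s_l^2$ agrees, up to the negligible corrections above, with $\sum_i\operatorname{Var}(Y_{l,i})$ computed under the product of the one-dimensional marginals.

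The central step is the passage from $\nu_{\mathcal P_l}$ to that product measure. Here one invokes the defining comparison inequality of an $\epsilon_l$-independent set to estimate $\mathbb E_{\nu_{\mathcal P_l}}\big[e^{it\widetilde S_l/s_l}\big]$ against $\prod_i\mathbb E_{\mathrm{prod}}\big[e^{itY_{l,i}/s_l}\big]$, or, equivalently, one runs Lindeberg's telescoping replacement directly on $\widetilde S_l$, detaching one block at a time and absorbing the $\epsilon_l$-error at each step. I expect this to be the main obstacle: the number of blocks $n_l$ grows with $l$, so one must verify that the accumulated error --- of order $n_l\epsilon_l$, or $(1+\epsilon_l)^{n_l}-1$ depending on the precise form of the independence estimate --- still tends to $0$, i.e. that the quantitative independence built into $\mathcal P_l$ is strong enough relative to the number of blocks it carries. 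Granting this, $\widetilde S_l/s_l$ has the same limiting distribution as $\sum_i Y_{l,i}/s_l$, a sum of independent rows of a triangular array with total variance tending to $1$, and the hypothesis~\eqref{eq:epslind} --- transported through the two harmless reductions above --- is exactly the Lindeberg condition for this array; the Lindeberg--Feller theorem then yields convergence to $N(0,1)$.

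For the reverse implication I would argue that the \emph{uniform} oscillation condition~\eqref{eq:4.2-00} forces the array $\{Y_{l,i}/s_l\}$ to be uniformly asymptotically negligible (equivalently, $\max_i\operatorname{Var}(Y_{l,i})/s_l^2\to 0$). The converse half of the Lindeberg--Feller theorem states that for a row-wise independent, uniformly asymptotically negligible array whose normalized sums converge to $N(0,1)$ the Lindeberg condition must hold; transporting this back through the gap and oscillation reductions --- which is precisely where the uniformity of~\eqref{eq:4.2-00} is needed, since without it the equivalence of the Lindeberg conditions for $S_l$ and for the independent array could fail --- produces the Lindeberg condition~\eqref{eq:epslind} and completes the proof.
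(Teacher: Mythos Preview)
Your overall strategy---discard gaps via~\eqref{eq:4.2-1}, replace block sums by functions of the $i$-th coordinate via~\eqref{eq:4.2-0}, then apply Lindeberg--Feller to an independent array---is exactly the paper's. But you have misread the definition of $\epsilon$-independence, and this makes you invent a difficulty that is not there.

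An $\epsilon$-independent set $\mathcal P_l$ comes with a \emph{bijection} $\Phi:E_l^{k_l}\to\mathcal P_l$. Since both $\nu_{\mathcal P_l}$ and $\nu_{F_l}$ are uniform on sets of the same cardinality, $\nu_{\mathcal P_l}$ is \emph{exactly} the pushforward $\Phi_*\nu_{F_l}$. The parameter $\epsilon_l$ governs only the shadowing distance in~\eqref{eq:epsind}; it has nothing to do with any ``comparison inequality'' between $\nu_{\mathcal P_l}$ and a product measure. Consequently, once you have (correctly) replaced $S^{n_l}_{a_i}h_l(p)$ by $G_{l,i}(\Phi^{-1}p):=S^{n_l}h_l(x_i)$ using the oscillation bound, the $G_{l,i}$ are \emph{genuinely} independent under $\nu_{F_l}$ and there is no further passage to make, no telescoping replacement, and no accumulated error of order $n_l\epsilon_l$ or $(1+\epsilon_l)^{n_l}-1$. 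Your ``main obstacle'' simply does not exist; the paper dispatches this step in one line by invoking Proposition~\ref{prop:indclt}.

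For the converse, your intuition is right but the mechanism is more specific than you indicate. The uniform oscillation condition~\eqref{eq:4.2-00} does not directly control $\max_i\sigma^2_{\mathcal P_l}(S^{n_l}_{a_i}h_l)/s_l^2$; rather, the paper uses a cyclic-permutation argument on $\mathcal P_l$ (send $p=\Phi(x_1,\dots,x_{k_l})$ to $\Phi(x_i,\dots,x_{k_l},x_1,\dots,x_{i-1})$) to show that all the block variances $\sigma^2_{\mathcal P_l}(S^{n_l}_{a_i}h_l)$ are asymptotically equal to $\sigma^2_{\mathcal P_l}(S^{n_l}h_l)$, which then forces each to be $\sim s_l^2/k_l$ and hence negligible. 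That is where~\eqref{eq:4.2-00} enters, and it is the one genuinely nontrivial point in the reverse direction.
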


In the course of proving this theorem, we prove CLT for the more general \textit{dynamical arrays} (equivalently defined in~\cite[Definition 5.1]{DenSenZha17}) instead of Birkhoff sums.
\begin{definition}\label{def:arraygeral}[Dynamical Array]
	For each $l\in\mathbb N$ and $k_l\in \mathbb N$ consider two increasing sequences of positive integers
	$\{a_{l,i}\}_{1\leqslant i\leqslant k_l}$ and $\{b_{l,i}\}_{1\leqslant i\leqslant k_l}$ with
	$$
	0\leqslant a_{l,1}\leqslant b_{l,1}<a_{l,2}\leqslant b_{l,2}<\ldots< a_{l, k_l}\leqslant b_{l, k_l}
	$$
	and real-valued functions $h_{l,i}:X\to \mathbb R$ where $1\leqslant i\leqslant k_l$.
	A \emph{dynamical array} is a sequence $\{H_{l,i}\}_{ 1\leqslant i\leqslant k_l,\, l\in \mathbb N}$ of real valued functions of the form
	$$ H_{l,i} = \sum_{j=a_{l,i}}^{b_{l,i}} h_{l,i}\circ T^{j} \qquad   1\leqslant i\leqslant k_l.$$
	We refer to the intervals $a_{l,i+1}-b_{l,i}$ as the \textit{gaps} of the dynamical array.
\end{definition}
\noindent We will only consider the case $b_{l,i}-a_{l,i}=n_l$ and $a_{l,i+1}-b_{l,i}=M_l$ for all $1\leqslant i\leqslant k_l$, whereas similar results can be obtained for the more general case.

While Theorem A holds for the uniform measure supported on $\epsilon$-independent set, one can find a weighted measure supported on the set of all periodic points for which a Lindeberg-type CLT holds (see Corollary~\ref{cor:1}).
On the other hand, although the $\epsilon$-independent sets only contain incomplete periodic points, there is rich enough structure so that the entropy of any weak limit of uniform measures on these sets equals the topological entropy in an expansive system (see Theorem~\ref{theo:weak-convergence}).
We apply Theorem~A to show that it suffices to take the Birkhoff average over the full orbit of a typical periodic point to approximate the integrals of continuous functions with respect to the measure of maximal entropy (see Theorem~\ref{theo:3}).

\begin{theoremB}
Consider an expansive dynamical system with \emph{global} specification. Assume some mild additional conditions on the constants $N_l=k_l(n_l+M_l)$ defining the array and $\epsilon_l$-independent set $\mathcal P_l$, one has for a Lipschitz function $h$ and any $\eta>0$
$$\lim_{l\to\infty}\nu_{\mathcal P_{l}}\left(\bigg\{
\bigg|\frac 1{N_l}
\sum_{j=0}^{N_l}\Big(h(T^jx)-\mathbb{E}_{\nu_{\mathcal P_{l}}}(h)\Big)
\bigg|
\leqslant k_l^{-\frac 1 2+\eta}\bigg\}\right) = 1.$$
Moreover, if $(X,T)$ admits a unique measure of maximal entropy, then for random sequences of periodic points  $p_l\in\mathcal{P}_l$, the uniform distributions over the orbit of $p_l$ converge to the measure of maximal entropy.
\end{theoremB}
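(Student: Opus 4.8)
The plan is to derive the displayed concentration estimate by applying Theorem~A to the constant sequence of observables $h_l\equiv h$, and then to deduce the convergence of the random orbit measures by combining that estimate with the weak convergence $\nu_{\mathcal P_l}\to\mu_{\max}$ furnished by Theorem~\ref{theo:weak-convergence} together with the uniqueness hypothesis, via a routine density argument.

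\emph{Step 1: the concentration estimate.} First I would verify that a fixed Lipschitz function $h$, viewed as the constant array $h_{l}\equiv h$, satisfies the oscillation condition~\eqref{eq:4.2-0}, the gap condition~\eqref{eq:4.2-1} and the Lindeberg condition~\eqref{eq:epslind} relative to $\nu_{\mathcal P_l}$; this is exactly where the ``mild additional conditions'' on $N_l=k_l(n_l+M_l)$ and on the $\epsilon_l$-independent sets are used, since for Lipschitz $h$ the oscillation of $\sum_{j}h\circ T^{j}$ along an orbit piece of length $n_l$ is controlled by $\epsilon_l$ and the expansiveness constant, the per-block sums are uniformly bounded by $n_l\|h\|_\infty$, and all the gaps together carry only $O(k_lM_l)$ summands. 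Granting these, Theorem~A yields that $s_l^{-1}\sum_{i=1}^{k_l}\sum_{j=a_{l,i}}^{b_{l,i}}\bigl(h(T^{j}x)-\mathbb{E}_{\nu_{\mathcal P_l}}(h)\bigr)$ is asymptotically standard normal under $\nu_{\mathcal P_l}$, hence its laws are tight. Bounding $s_l$ from above through $\operatorname{Var}_{\nu_{\mathcal P_l}}\!\bigl(\sum_{j=a_{l,i}}^{b_{l,i}}h\circ T^{j}\bigr)\le(n_l\|h\|_\infty)^2$ and the smallness of the cross-block covariances on an $\epsilon_l$-independent set gives $s_l^2\le C\|h\|_\infty^2\,k_l\,n_l^2\le C\|h\|_\infty^2\,N_l^2\,k_l^{-1}$, so that $s_l/N_l\le C\|h\|_\infty\,k_l^{-1/2}$. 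Finally, the full Birkhoff sum equals the gapped sum plus the centered sum over the gaps, the latter having $L^{2}(\nu_{\mathcal P_l})$-norm $o(s_l)$ by the gap condition; putting this together with tightness, Chebyshev's inequality and $k_l\to\infty$ gives, for every $\eta>0$,
\[
\nu_{\mathcal P_l}\Bigl(\bigl|\tfrac1{N_l}\textstyle\sum_{j=0}^{N_l}\bigl(h(T^{j}x)-\mathbb{E}_{\nu_{\mathcal P_l}}(h)\bigr)\bigr|>k_l^{-1/2+\eta}\Bigr)\longrightarrow 0,
\]
which is the asserted limit.

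\emph{Step 2: convergence of the orbit measures.} Denote by $\mu_{p}$ the uniform distribution along the orbit of a periodic point $p$. By Theorem~\ref{theo:weak-convergence} every weak-$*$ limit $m$ of $(\nu_{\mathcal P_l})_l$ is a $T$-invariant measure with $h_m(T)=h_{\mathrm{top}}(T)$; by the variational principle $m$ is therefore a measure of maximal entropy, hence $m=\mu_{\max}$ by uniqueness. Since $X$ is compact (expansiveness), the space of Borel probability measures on $X$ is compact and metrizable in the weak-$*$ topology, so $\nu_{\mathcal P_l}\to\mu_{\max}$ weakly, i.e.\ $\mathbb{E}_{\nu_{\mathcal P_l}}(h)\to\int h\,d\mu_{\max}$ for every $h\in C(X)$. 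For a fixed Lipschitz $h$ one then decomposes
\[
\int h\,d\mu_{p_l}-\int h\,d\mu_{\max}=\Bigl(\tfrac1{N_l}\textstyle\sum_{j=0}^{N_l}h(T^{j}p_l)-\mathbb{E}_{\nu_{\mathcal P_l}}(h)\Bigr)+\Bigl(\mathbb{E}_{\nu_{\mathcal P_l}}(h)-\int h\,d\mu_{\max}\Bigr)+o(1),
\]
where the $o(1)$ absorbs the difference between $\mu_{p_l}$ and the length-$N_l$ average; the middle term tends to $0$ by the previous sentence, while by Step~1 the first term is $\le k_l^{-1/2+\eta}$ with $\nu_{\mathcal P_l}$-probability tending to $1$, so $\int h\,d\mu_{p_l}\to\int h\,d\mu_{\max}$ in $\nu_{\mathcal P_l}$-probability. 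Fixing a countable family of Lipschitz functions that determines weak convergence of probability measures on $X$ and intersecting the corresponding probability-one events (a diagonal argument; the convergence upgrades to almost sure once the deviation probabilities in Step~1 are summable in $l$, which the mild growth conditions on $k_l$ guarantee) then shows that the uniform distributions $\mu_{p_l}$ over the orbits of $\nu_{\mathcal P_l}$-random points $p_l\in\mathcal P_l$ converge weakly to $\mu_{\max}$.

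I expect the main obstacle to lie in Step~1, and specifically in pinning down the order of the normalizing sequence: the whole argument rests on the bound $s_l=O(N_l k_l^{-1/2})$ and on checking that a single Lipschitz observable genuinely satisfies the oscillation, gap and Lindeberg conditions under the prescribed constraints relating $k_l$, $n_l$, $M_l$ and $\epsilon_l$. A secondary, but real, subtlety is the mode of convergence in Step~2: securing almost sure (rather than merely in-probability) convergence of the random orbit measures requires a Borel--Cantelli input, hence a quantitative form of the concentration estimate, which is precisely what the mild conditions on the constants are there to supply.
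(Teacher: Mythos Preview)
Your overall architecture matches the paper's, but Step~1 has a real gap that the paper's proof explicitly addresses and you do not. All three conditions of Theorem~A---oscillation~\eqref{eq:4.2-0}, gap~\eqref{eq:4.2-1} and Lindeberg~\eqref{eq:epslind}---carry $s_l$ in the denominator, and the ``mild conditions'' on $k_l,n_l,M_l,\epsilon_l$ give \emph{no} lower bound on $s_l$: the variance $s_l^2=\sum_i\sigma^2_{\mathcal P_l}(S^{n_l}_{a_i}h)$ depends on $h$ and can be arbitrarily small (think of $h$ close to a coboundary). In that regime your verification of~\eqref{eq:4.2-0} fails, since the numerator $k_l(n_l\epsilon_l)^2$ need not be $o(s_l^2)$, and the appeal to Theorem~A collapses. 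The paper's fix is a dichotomy: set $I=\{l: s_l^2\ge k_l^{1-\eta_l^*}n_l^2\}$; for $l\in I$ the assumed lower bound on $s_l$ makes the oscillation, gap and Lindeberg conditions checkable exactly as you sketch, and the CLT then gives the concentration; for $l\notin I$ one bypasses Theorem~A entirely and uses Chebyshev directly, the \emph{smallness} of $s_l$ now working in one's favour. You flag the lower bound on $s_l$ as ``the main obstacle'' but do not supply this split, and without it Step~1 is incomplete.

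For Step~2 your route is close to the paper's but more explicit about the passage from concentration to weak convergence of orbit measures; the paper is terse here. The substantive difference is how summability for Borel--Cantelli is obtained: the paper invokes the Berry--Esseen theorem on the $k_l$ independent blocks to get a rate $O(k_l^{-1/2})$ in the CLT, and then imposes the fourth-moment condition $\sum_l k_l^{-1/2}\sup_{\|h\|_{\mathrm{Lip}}\le1}\mathbb E_{\mathcal P_l}(S^{n_l}h)^4/\sigma^4_{\mathcal P_l}(S^{n_l}h)<\infty$ to make the exceptional probabilities summable. Your proposal appeals to ``mild growth conditions on $k_l$'' for summability, which is not enough on its own: one needs a quantitative rate in the concentration estimate, and Berry--Esseen (plus a moment hypothesis) is what provides it.
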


In the {local} scenario, local $\epsilon$-independence structure is constructed at pre-defined locations. One would not need a gap condition but an oscillation condition is still necessary. In place of Theorem A we obtain a local Lindeberg-type CLT which studies the fluctuation of partial sums around local means (see Theorem~\ref{theo:2}).

\begin{theoremC}

	Consider a dynamical system with \emph{local} specification. Then along a sequence of locally $\epsilon_l$-independent sets $\mathcal P_{l}$  the following holds:
	given a dynamical array satisfying the oscillation condition, the Lindeberg condition
	holds with respect to the uniform distribution $\nu_{\mathcal P_{l}}$ on $\mathcal P_{l}$, if and only if the array
	is $\nu_{\mathcal{P}_l}$-asymptotically negligible and the CLT holds.
\end{theoremC}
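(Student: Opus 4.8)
The plan is to peel off the dynamics using the \emph{local} $\epsilon_l$-independence (Definition~\ref{def:locidp}) and the oscillation condition, reducing everything to the classical Lindeberg--Feller equivalence for triangular arrays of independent random variables: for a row-wise independent array $\{Y_{l,i}\}_{1\le i\le k_l}$ of centered, square-integrable functions with total variance $s_l^2=\sum_{i}\operatorname{Var}(Y_{l,i})$, the Lindeberg condition
\[
\frac1{s_l^2}\sum_{i=1}^{k_l}\mathbb E\bigl(Y_{l,i}^2\,\mathbbm 1_{\{|Y_{l,i}|>\tau s_l\}}\bigr)\xrightarrow[l\to\infty]{}0 \qquad\text{for every }\tau>0
\]
holds if and only if the array is asymptotically negligible, that is $\max_i\mathbb P(|Y_{l,i}|>\tau s_l)\to0$ for every $\tau>0$, and $s_l^{-1}\sum_i Y_{l,i}$ converges in distribution to the standard Gaussian. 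Given this, Theorem~C is obtained by transporting each of its three ingredients across the reduction.

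\emph{Reduction to the independent case.} For each fixed $l$ I would compare the uniform measure $\nu_{\mathcal P_l}$ with the product measure $\bar\nu_l$ on the readings of the locally $\epsilon_l$-independent set $\mathcal P_l$ at its pre-assigned locations; local specification (Definition~\ref{def:specloc}) is what supplies enough periodic points for this comparison to be meaningful. Write $\widetilde H_{l,i}$ for the members of the dynamical array recentered by their local (conditional) means and $Y_{l,i}$ for the corresponding independent, centered random variables under $\bar\nu_l$, so that the dynamical total variance $s_l^2$ reduces to $\sum_i\operatorname{Var}(Y_{l,i})$ up to a negligible error. The oscillation condition is designed precisely to control, in first and second moment and relative to $s_l^2$, the discrepancy between $\widetilde H_{l,i}$ and $Y_{l,i}$; from this one obtains that $s_l^{-1}\sum_i(\widetilde H_{l,i}-Y_{l,i})\to0$ in $\nu_{\mathcal P_l}$-probability, so that by Slutsky's lemma the two normalized partial sums have the same distributional limits (the CLT thus transfers), and likewise that the truncated second moments governing the Lindeberg ratio and the tail probabilities governing asymptotic negligibility are stable under the passage $\nu_{\mathcal P_l}\leftrightarrow\bar\nu_l$. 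I expect the verification that the oscillation condition is robust enough to carry all three properties --- the CLT, the Lindeberg condition, and asymptotic negligibility --- in \emph{both} directions to be the main technical obstacle; once it is done, only the probabilistic equivalence for independent arrays remains.

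\emph{Forward implication.} Granting the Lindeberg condition for $\{Y_{l,i}\}$, the CLT is the classical Lindeberg central limit theorem for triangular arrays (equivalently, the dynamical-array version established in the course of proving Theorem~A). Asymptotic negligibility follows from the elementary estimate
\[
\mathbb P(|Y_{l,i}|>\tau s_l)\le\frac1{\tau^2 s_l^2}\,\mathbb E\bigl(Y_{l,i}^2\mathbbm 1_{\{|Y_{l,i}|>\tau s_l\}}\bigr)\le\frac1{\tau^2}\cdot\frac1{s_l^2}\sum_{j}\mathbb E\bigl(Y_{l,j}^2\mathbbm 1_{\{|Y_{l,j}|>\tau s_l\}}\bigr),
\]
which, combined with the Lindeberg condition, gives $\max_i\mathbb P(|Y_{l,i}|>\tau s_l)\to0$ for every $\tau>0$, that is, asymptotic negligibility.

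\emph{Reverse implication (Feller's half).} Here I would pass to the characteristic functions $\phi_{l,i}$ of the normalized summands $Y_{l,i}/s_l$. Asymptotic negligibility, via $|e^{iu}-1|\le\min(2,|u|)$, gives $\max_i|\phi_{l,i}(t)-1|\to0$ for each $t$; centering and $|e^{iu}-1-iu|\le u^2/2$ give $|\phi_{l,i}(t)-1|\le\tfrac{t^2}{2}\mathbb E[(Y_{l,i}/s_l)^2]$, hence $\sum_i|\phi_{l,i}(t)-1|\le\tfrac{t^2}{2}$. By the standard linearization of the product (valid since the summands are uniformly small and absolutely summable), $\exp\!\bigl(\sum_i(\phi_{l,i}(t)-1)\bigr)\to e^{-t^2/2}$, and taking moduli $\sum_i\bigl(1-\operatorname{Re}\phi_{l,i}(t)\bigr)\to\tfrac{t^2}{2}$, that is $\sum_i\mathbb E\bigl(1-\cos(tY_{l,i}/s_l)\bigr)\to\tfrac{t^2}{2}$, for each fixed $t$. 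Since $\sum_i\mathbb E[(Y_{l,i}/s_l)^2]=1$, this rewrites as $\sum_i\mathbb E\bigl[\tfrac{t^2}{2}(Y_{l,i}/s_l)^2-(1-\cos(tY_{l,i}/s_l))\bigr]\to0$, where each summand is nonnegative and, on $\{|Y_{l,i}|>\tau s_l\}$, is at least $\tfrac{t^2}{4}(Y_{l,i}/s_l)^2$ provided $t\ge 2\sqrt2/\tau$ (because $1-\cos\le2$). Fixing such a $t$ then forces $\limsup_{l}\tfrac1{s_l^2}\sum_i\mathbb E\bigl(Y_{l,i}^2\mathbbm 1_{\{|Y_{l,i}|>\tau s_l\}}\bigr)=0$ for every $\tau>0$, which is the Lindeberg condition. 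Combining the two implications with the reduction above establishes Theorem~C.
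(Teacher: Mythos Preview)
Your proposal is correct and takes essentially the same route as the paper: use the bijection $\Phi_l$ between $\mathcal P_l$ and its product set $F_l$ together with the oscillation bound $|G_{l,i}(\Phi_l^{-1}p)-S^{n_l}_{a_i}h_{l,i}(p)|\le\omega^{n_l}_{a_i}(h_{l,i},\epsilon_l,p)$ to transfer each of the three properties (Lindeberg, CLT, asymptotic negligibility) to the independent array $\{G_{l,i}\}$ on $(F_l,\nu_{F_l})$, and then invoke the classical Lindeberg--Feller equivalence for independent triangular arrays (which the paper records as Proposition~\ref{prop:indclt}, citing a textbook). The only difference is cosmetic: you re-derive Feller's half via characteristic functions instead of citing it, and that sketch is correct.
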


As local specification implies global specification in a topologically mixing system, one can study the fluctuation of partial sums with respect to the measure of maximal entropy by local $\epsilon$-independence structure. %When the pre-defined locations shrink in size and pile up to fill the whole space, we can describe this  fluctuation as follows (see Theorem~\ref{theo:8.2}).
We obtain the following result (see Theorem~\ref{theo:8.2}).

\begin{theoremD}
Let $(X,T)$ be an expansive and topologically mixing dynamical system with the \emph{local} specification property. Then, with respect to  the unique measure of maximal entropy $\mu$,  the class of wildly oscillating functions in $L^3(\mu)$ (see Definition~\ref{def:5.2}) satisfying the moment condition~\eqref{eq:9.30} and with integrable local variance~\eqref{eq:9.32} belongs to the partial domain of attraction of a mixed normal distribution, i.e. a subsequence of properly centered and normed partial sums converges weakly to a mixed normal distribution.
\end{theoremD}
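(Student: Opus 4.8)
The plan is to run the local Lindeberg-type CLT of Theorem~\ref{theo:2} as a \emph{conditional} central limit theorem along a sequence of periodic-orbit measures converging to $\mu$, and then to convert that conditional Gaussian behaviour, together with the (random) limit of the conditional variances, into weak convergence to a mixed normal. Throughout I would use that in a topologically mixing system the local specification property implies the global one, so the constructions behind Theorem~\ref{theo:1}, Corollary~\ref{cor:1} and Theorem~\ref{theo:weak-convergence} are also available.

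First I would fix a wildly oscillating $h\in L^3(\mu)$ satisfying \eqref{eq:9.30} and \eqref{eq:9.32} and set up the approximating structure. Using expansiveness and Theorem~\ref{theo:weak-convergence}, choose locally $\epsilon_l$-independent sets $\mathcal P_l$ (Definition~\ref{def:locidp}), with blocks of length $n_l$ and $k_l\to\infty$ blocks, so that $N_l=k_l n_l$ and the uniform measures $\nu_{\mathcal P_l}$ converge weak-$*$ to an invariant measure of maximal entropy, which by uniqueness is $\mu$. Attach to $h$ and to the prescribed locations of $\mathcal P_l$ the dynamical array $H_{l,i}=\sum_{j=a_{l,i}}^{b_{l,i}}h\circ T^{j}$ of Definition~\ref{def:arraygeral}, and write $\mathcal F_l$ for the finite $\sigma$-algebra generated by those locations. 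I would then verify the hypotheses of Theorem~\ref{theo:2}: the oscillation condition follows from the block structure (the oscillation over a single block stays bounded while the total variance grows with $k_l$) together with the non-degeneracy supplied by the integrable local variance~\eqref{eq:9.32}, and the Lindeberg condition follows from the moment condition~\eqref{eq:9.30} by a Lyapunov-type estimate, the $L^3$ integrability providing the spare exponent. Theorem~\ref{theo:2} then gives, with respect to $\nu_{\mathcal P_l}$, that the array is asymptotically negligible and that the $\mathcal F_l$-centered sum $\sum_i\bigl(H_{l,i}-\mathbb E_{\nu_{\mathcal P_l}}(H_{l,i}\mid\mathcal F_l)\bigr)$, normalized by the square root of the total conditional variance $V_l=\sum_i\operatorname{Var}_{\nu_{\mathcal P_l}}(H_{l,i}\mid\mathcal F_l)$, converges to a standard normal.

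It remains to pass to $\mu$ and to extract the mixture. Since $h$ is not continuous, weak-$*$ convergence of $\nu_{\mathcal P_l}$ cannot be invoked directly; instead I would approximate $h$ in $L^3$ by Lipschitz functions $h^{(m)}$, use Theorems~\ref{theo:1} and~\ref{theo:2} together with the $\epsilon$-independence structure and the uniform $L^3$ control along $\nu_{\mathcal P_l}\to\mu$ to pass to $l\to\infty$ for each fixed $m$, and then let $m\to\infty$ through a standard triangular-array approximation lemma; this moves the previous conclusion to $\mu$, with centering $A_{N_l}=N_l\int h\,d\mu$ up to a negligible term. By~\eqref{eq:9.32} the normalized conditional variances $V_l/b_{N_l}^2$ (with $b_{N_l}^2$ the total variance) are uniformly integrable, hence tight, while the definition of a wildly oscillating function (Definition~\ref{def:5.2}) guarantees that a subsequential limit $V$ is a genuinely non-constant nonnegative random variable; passing to such a subsequence yields $V_l/b_{N_l}^2\Rightarrow V$ \emph{jointly} with the normalized sum. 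This is exactly the stable-convergence situation: conditionally on $\mathcal F_l$ the sum is asymptotically $\mathcal N(0,V_l)$, so integrating out $\mathcal F_l$ gives $\bigl(S_{N_l}-A_{N_l}\bigr)/b_{N_l}\Rightarrow\int_0^\infty\mathcal N(0,v)\,dG(v)$ under $\mu$, with $G$ the law of $V$. As this convergence is obtained only along the sparse sequence $N_l$, and a further subsequence may be needed for $V_l/b_{N_l}^2$ to converge, one reaches membership in the \emph{partial} domain of attraction of a mixed normal.

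The step I expect to be the main obstacle is the transfer to $\mu$: controlling the non-continuous observable $h$ while \emph{simultaneously} controlling the conditional-variance functional $V_l$ finely enough to obtain the joint convergence of $\bigl(S_{N_l}/b_{N_l},\,V_l/b_{N_l}^2\bigr)$ that stable convergence demands, and in particular ruling out that the global fluctuation of $\sum_i\mathbb E(H_{l,i}\mid\mathcal F_l)$ around $N_l\int h\,d\mu$ absorbs the local one. The conditions~\eqref{eq:9.30} and~\eqref{eq:9.32} and the notion of wild oscillation in Definition~\ref{def:5.2} are precisely what should make this control possible.
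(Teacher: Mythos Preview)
Your high-level picture---a conditional CLT on the cells of $\alpha_l$ followed by integration against the (random) limit of the conditional variances to obtain a mixed normal---is exactly the architecture of the paper's argument. But the implementation you propose has a genuine gap, and it is precisely the step you flag as the main obstacle.

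The paper does \emph{not} use Theorem~\ref{theo:2}. It proves Theorem~D by reducing to Proposition~\ref{prop:9.1}, whose engine is a Berry--Esseen estimate (Petrov), not a Lindeberg CLT. The reason is uniformity: a locally $\epsilon$-independent set $\mathcal P$ in Definition~\ref{def:locidp} is attached to a \emph{single} cell $A\in\alpha_l$, not to the whole space. So the ``conditional CLT'' you want is really a family of CLTs, one per $A\in\alpha_l$, and the number of cells explodes with $l$. Theorem~\ref{theo:2} gives, for each fixed $A$, convergence of the centered and normalized sum to $\mathcal N(0,1)$, but with no rate; summing $\sum_{A\in\alpha_l}\nu_{P_j}(A)\,[\ldots]$ over an $l$-dependent family does not survive passage to the limit without a uniform error bound. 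The moment condition~\eqref{eq:9.30} is there precisely to feed Berry--Esseen and produce an $O(1/\sqrt{k_l})$ error that is uniform in $A$, after which one can legitimately sum and invoke~\eqref{eq:9.32} (which is the paper's~\eqref{eq:9.5}) to identify the limit as $\int\mathcal N(t/\sigma_x)\,d\mu$. Your ``stable convergence'' paragraph is the place where this uniformity is needed and not supplied; the joint convergence of $(S_{N_l}/b_{N_l},\,V_l/b_{N_l}^2)$ does not follow from Theorem~\ref{theo:2} plus tightness of $V_l/b_{N_l}^2$ alone.

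Two smaller points. First, your description of a single locally $\epsilon_l$-independent $\mathcal P_l$ with $\nu_{\mathcal P_l}\Rightarrow\mu$ conflates the local and global constructions; in the paper one approximates $\mu$ by $\nu_{P_j}$ (all periodic points of large period $j$) and then, for each $A\in\alpha_l$, invokes Proposition~\ref{prop:locidp} to sandwich $A\cap P_j\subset\mathcal P_l\subset B_{\epsilon^*}^j(A)\cap P_j$, using~\eqref{eq:9.2b} to control the discrepancy. Second, the oscillation condition you propose to ``verify'' is not derived---it is part of the data of a wildly oscillating function (Definition~\ref{def:5.2}(2)), and it transfers to the periodic approximation directly.
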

Since we only assume continuity for the transformation $T$, we will prove our results for positively expansive systems in this paper. Proofs are analogous for expansive homeomorphisms.

Our results clarify the amount of randomness present in a dynamical system which is due to its periodic orbit structure. From a dynamical viewpoint such theorems are important for various reasons: first, they allow one to derive CLTs for non-standard functions and for limits of invariant discrete probabilities, such as equilibrium states. In our context we only consider measures of maximal entropy but our results should also apply to more general equilibrium states. This applies in particular to Anosov diffeomorphisms and subshifts of finite type.
Secondly, such theorems provide methods for data and numerical analysis of time series. Indeed this analysis can be carried out through descriptive statistics based on dynamical arrays and their asymptotic normality. From the point of view of applications (data and numerical analysis) it is also important to study how the variance of the dynamical arrays is determined by the periodic point structure.

\textbf{Structure of the paper}
In Section \ref{sec:2} we collect basic definitions and notations.
We recall the notions of local and global specification used in the paper and illustrate them with a few examples which include
Bowen's Axiom-A$^*$-homeomorphisms, hyperbolic rational map of $\mathbb{S}^2$ and topological Markov chains with restricted entries.
In Section~\ref{sec:globalCLT}  we define the $\epsilon$-independence structure in systems with global specification and prove the CLT Theorem~A. In Section~\ref{sec:localCLT} we define the local $\epsilon$-independence in systems with local specification and prove the CLT Theorem~C. In Section~\ref{sec:mme} we apply both concepts of $\epsilon$-independence structure to study the fluctuation problem with respect to the measure of maximal entropy. We show that the uniform measures on $\epsilon$-independent sets converge weakly to the measure of maximal entropy if the latter is unique and $T$ restricted to periodic points is a homeomorphism, and prove Theorems B and D.
In Section~\ref{sec:decomposition} a decomposition of the variance is described.

\section{Notations and definitions}\label{sec:2}

Consider a continuous transformation $T:X\to X$ of a compact metric space $(X, d)$.
Denote the sets of periodic points by
$$
P_n:=\{x\in X\colon T^n x=x\} \quad\mbox{ and }\quad P:=\bigcup_{n\in\mathbb N} P_n
$$
and denote
$$
B_\epsilon(x):=\{y\in X\colon d(x,y)<\epsilon\},\quad
d_n(x,y):=\sup_{0\leqslant k\leqslant n-1} d(T^kx, T^ky),$$
and
$$B^n_\epsilon(x):=\{y\in X\colon d_n(x,y)<\epsilon\},$$
$$B_{\epsilon}^{n}(A):=\{y\in X: d(T^iy, T^i A)<\epsilon, 0\leqslant i\leqslant n-1\} \text{ for } A\subset X.$$
A set $E\subset X$ is
\emph{$(n,\epsilon)$-separated} if $d_n(x,y)>\epsilon$ for all $x\neq y \in E$ and
\emph{$(n, \epsilon)$-spanning for $Y$} if $Y\subset \bigcup_{x\in E}B^n_\epsilon(x)$.

The map $T$ is said to be
\emph{$\epsilon^*$-positively expansive}, if for any $x\ne y\in X$ there exists $n\in\mathbb N$ such that
$$ d(T^nx, T^ny)>\epsilon^*.$$
If $T$ is invertible, one considers expansiveness rather than positive expansiveness with $\mathbb N$ replaced by $\mathbb Z$ and the results in this paper can be proved analogously.
Suppose throughout the paper that  $P_n$ is finite for every $n$. This is true, for instance, for positively expansive maps.

For any finite subset $Z\subset X$, denote its cardinality by $|Z|$ and the uniform probability on $Z$ by $\nu_Z$, i.e.
$$\nu_Z(W):= \frac {|W\cap Z|}{|Z|} = \frac 1{|Z|}\sum_{z\in Z} \mathbbm 1_W(z)\qquad W\subset X.$$ For a real valued function $h$ on $X$, denote by $\mathbb E_Z(h)$ its expectation with respect to $\nu_Z$ and by $\sigma_Z^2(h)$ the variance when they exist.
Recall that the \emph{Birkhoff sums} are given by, for $n,m\in\mathbb N$,
$$ S_m^n h(x):=\sum_{i=m}^{m+n-1} h(T^i x),\quad\mbox{ and }\quad S^n h(x):= S_0^n h (x).$$
The \emph{oscillations} of $h: Y\to\mathbb R$, $Y$ being $P$ or $X$, around $x\in Y$ are given by
$$ \omega_m^n(h,\epsilon, x) :=\sup\Big\{ |S_m^n h (x)- S^n h(y)|\colon y\in B_\epsilon^{n}(T^m x)\cap Y\Big\}$$
and $\omega^n(h,\epsilon, x):=\omega_0^n(h,\epsilon, x)$.

The notation $f\lesssim g$ means that $f\leqslant Cg$ for some constant $C$. Denote the distribution function of the standard normal distribution by $\mathcal N(t)$.

\subsection{Specification}\label{sec:spec}

Specification property was introduced by Bowen in \cite{Bow71} (see also \cite[(21.1)]{DenGriSig76}), from which were derived many related definitions in the literature (see e.g. \cite{KwiLacOpr16}) \footnote{We thank W. Cordeiro for helpful discussions on this subject}. Each definition may lead to a CLT similar to the one proven here.
The notions of specification in this paper are defined as follows:

\begin{definition}\label{def:spec}
[Global Specification]
The dynamical system $(X,T)$ has the (global) specification property
 if for every $\epsilon>0$ there exists $M(\epsilon)\in \mathbb N$ such that: for any $M\geqslant M(\epsilon)$,
$x_1,...,x_k\in X$, $k\in \mathbb{N}$ and $n\in \mathbb{N}$   there exists a periodic point $p\in P_{k(n+M)}$ with
$$T^{(i-1)(n+M)}p\in B^n_\epsilon(x_i)\qquad i=1,...,k.$$
\end{definition}

\begin{definition}\label{def:specloc}
[Local Specification]
The dynamical system $(X,T)$ has the local specification property
if  for any $\epsilon>0$ there exist $\delta=\delta(\epsilon)>0$ and $N(\epsilon)\in \mathbb N$ such that for any $x_1,...,x_k\in X$, $k\in\mathbb N$ and  $n\geqslant N(\epsilon)$ with
$$ d(T^{n}x_i, x_{i+1}) <\delta\qquad i=1,...,k\mbox{ and }x_{k+1}=x_1$$
there exists a periodic point $p\in P_{kn}$ with
$$T^{(i-1)n}p\in B^n_\epsilon(x_i)\qquad i=1,...,k.$$
\end{definition}

For topologically mixing maps the local specification property implies the global specification (see e.g. \cite{Hasselblatt2017a}), often simply referred to as \emph{specification}. However, in absence of topological mixing, both notions are distinct.
Indeed, global specification implies that the map $T$ is topologically mixing. This does not need to be the case for maps with the local specification. On the other hand, Example 3 in Section \ref{sec:examples} shows that global specification does not imply local specification.

In order to simplify calculations and notations, our definition of global specification requires equal length for all the stretches, still it is possible to connect any two stretches of different lengths.
\begin{proposition}\label{prop:weakspec}
Suppose $(X,T)$ satisfies the global specification property. Then for every $\epsilon>0$, any $n_1, n_2\in\mathbb N$, $M_1, M_2\geqslant \tilde M(\epsilon)=M(\epsilon/3)$ and $x_1, x_2\in X$, there exists a periodic point $p\in P_{n_1+M_1+n_2+M_2}$ such that
$$p\in B_{\epsilon}^{n_1}(x_1)\text{~ and ~} T^{(n_1+M_1)}p\in B_\epsilon^{n_2}(x_2)\qquad i=1,2.$$
\end{proposition}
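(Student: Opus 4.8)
The plan is to obtain $p$ from two successive applications of the (equal–length) global specification property of Definition~\ref{def:spec}: a first call that produces a ``template'' orbit shadowing $x_1$ and $x_2$ with the prescribed relative positions, and a second call that re-shadows this template over a window of the right length so as to pin the period down to exactly $N:=n_1+M_1+n_2+M_2$. Both calls are made at scale $\epsilon/3$, which is admissible since $M_1,M_2\geqslant M(\epsilon/3)$, and the factor $1/3$ leaves the slack needed for the triangle inequalities below. We would first reduce to the case $n_1\geqslant n_2$: if $n_1<n_2$, apply the statement (in the already-settled case where the first block is the longer one) to the data $(n_2,M_2,x_2),(n_1,M_1,x_1)$ to get $q\in P_N$ with $q\in B^{n_2}_\epsilon(x_2)$ and $T^{n_2+M_2}q\in B^{n_1}_\epsilon(x_1)$; then $p:=T^{n_2+M_2}q$ again lies in $P_N$, satisfies $p\in B^{n_1}_\epsilon(x_1)$, and $T^{n_1+M_1}p=T^{N}q=q\in B^{n_2}_\epsilon(x_2)$, as required.

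Assuming now $n_1\geqslant n_2$, the first step invokes Definition~\ref{def:spec} at scale $\epsilon/3$ with $k=2$, common block length $n_1$, gap $M_1$, and the points $x_1,x_2$; this returns a periodic point $p'\in P_{2(n_1+M_1)}$ (whose own period we discard) with $p'\in B^{n_1}_{\epsilon/3}(x_1)$ and $T^{n_1+M_1}p'\in B^{n_1}_{\epsilon/3}(x_2)\subseteq B^{n_2}_{\epsilon/3}(x_2)$, so $p'$ shadows $x_1$ along $[0,n_1)$ and $x_2$ along $[n_1+M_1,n_1+M_1+n_2)$ to within $\epsilon/3$. The second step invokes Definition~\ref{def:spec} again at scale $\epsilon/3$, this time with $k=1$, block length $\ell:=n_1+M_1+n_2$, gap $M_2$, and the single point $p'$; this returns a periodic point $p\in P_{\ell+M_2}=P_N$ with $p\in B^{\ell}_{\epsilon/3}(p')$. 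Combining the two steps by the triangle inequality (using $n_2\leqslant n_1$ and $n_1+M_1+n_2=\ell$): for $0\leqslant j<n_1$ one has $d(T^jp,T^jx_1)\leqslant d(T^jp,T^jp')+d(T^jp',T^jx_1)<2\epsilon/3<\epsilon$, hence $p\in B^{n_1}_\epsilon(x_1)$; and for $0\leqslant j<n_2$, since $n_1+M_1+j<\ell$, one has $d(T^{n_1+M_1+j}p,T^jx_2)\leqslant d(T^{n_1+M_1+j}p,T^{n_1+M_1+j}p')+d(T^{n_1+M_1+j}p',T^jx_2)<2\epsilon/3<\epsilon$, hence $T^{n_1+M_1}p\in B^{n_2}_\epsilon(x_2)$.

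The delicate point --- and the reason for the two-step construction rather than a single specification call --- is forcing the period to be exactly $N$: a single use of Definition~\ref{def:spec} only produces periods of the form $k(n+M)$, and for generic $n_1,M_1,n_2,M_2$ (say when $N$ is prime) the only admissible choice would be $k=1$, which prescribes a single block. Performing the $k=1$ call in the \emph{final} step removes this divisibility constraint, because the period $\ell+M_2$ of a one-block specification equals $N$ for the right choice of $\ell$; the price is one extra layer of shadowing, absorbed by the $\epsilon/3$ budget. We expect this period bookkeeping, together with the $n_1<n_2$ relabelling, to be the only mildly subtle part, the rest being a direct unwinding of the definitions.
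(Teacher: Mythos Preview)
Your proof is correct, and in fact it is cleaner than the argument in the paper. The paper first reduces to $M_1=M_2$ by absorbing the excess gap into one of the block lengths, and then applies Definition~\ref{def:spec} \emph{three} times (at scale $\epsilon/3$): once with $k=1$ to produce a short periodic point $p_1\in P_{n_1+M}$ shadowing $x_1$, once with $k=2$ and block length $n_2$ to wrap $p_1$ (via its periodicity) together with $x_2$ into a point $p_2$, and once more with $k=1$ and block length $n_1+n_2+M$ to pin the period. Your two-call strategy---a $k=2$ call with block length $\max(n_1,n_2)$ and gap $M_1$ to build the template $p'$, followed by a $k=1$ call with block length $n_1+M_1+n_2$ and gap $M_2$ to adjust the period---accomplishes the same thing more directly, exploiting the freedom in Definition~\ref{def:spec} to choose the gap $M\geqslant M(\epsilon/3)$ differently in each call. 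The symmetry reduction for $n_1<n_2$ via the shift $p=T^{n_2+M_2}q$ of a periodic point is a nice touch that avoids the paper's preliminary reduction to equal gaps. As a minor remark, your construction only consumes $2\epsilon/3$ of the error budget, so the constant $\tilde M(\epsilon)=M(\epsilon/3)$ could even be sharpened to $M(\epsilon/2)$ in your argument; the paper's three-layer shadowing genuinely needs the factor $3$.
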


\begin{proof}
First we note that it suffices to show the statement for any $M_1=M_2\geqslant \tilde M(\epsilon)$, since $n_1$ and $n_2$ can be adjusted so that $M_1=M_2$. Given any $n_1, n_2\in\mathbb N$, $M\geqslant \tilde M(\epsilon)$ and $x_1, x_2\in X$, suppose $n_1\leqslant n_2$ wlog. By the specification property, there exists $p_1\in P_{n_1+M}$ such that $$p_1\in B_{\epsilon/3}^{n_1}(x_1).$$
Choose $k\in\mathbb N$ such that $k(n_1+M)\geqslant n_2-n_1.$ Use the specification property again to obtain $p_2\in P_{2(n_2+M)}$ such that $$p_2\in B_{\epsilon/3}^{n_2}(T^{k(n_1+M)-n_2+n_1}p_1)\text{~ and ~} T^{n_2+M}p_2\in B_{\epsilon/3}^{n_2}(x_2).$$ In particular,  noting that $T^{k(n_1+M)}p_1=p_1$, the first inclusion implies
$$T^{n_2-n_1}p_2\in  B_{\epsilon/3}^{n_2}(p_1)\subset B_{\epsilon/3}^{n_1}(p_1).$$
Now the specification property implies the existence of some $p\in P_{n_1+n_2+2M}$ with $$p\in B_{\epsilon/3}^{n_1+n_2+M}(T^{n_2-n_1}p_2),$$
in particular, $$p\in B_{\epsilon/3}^{n_1}(T^{n_2-n_1}p_2)\text{~ and ~} T^{n_1+M}p\in B_{\epsilon/3}^{n_2}(T^{n_2+M}p_2).$$
Therefore $p\in B_\epsilon^{n_1}(x_1)$ and $T^{n_1+M}p\in B_\epsilon^{n_2}(x_2).$
\end{proof}

\subsection{Examples of maps with specification} \label{sec:examples}
We now provide a few examples for which
these specification properties hold.

\subsubsection*{Example 1}

An \emph{Axiom A$^*$ homeomorphism} (c.f. \cite[pg 378]{Bow71}) is a homeomorphism $T:X\to X$ of a compact metric space $X$ which satisfies the following properties:
\begin{enumerate}
\item [(A1)]Periodic points are dense in $X$.
\item [(A2)]
For every $\epsilon>0$ there exists $\delta>0$ such that
$$d(x,y)<\delta\quad\Longrightarrow\quad W_\epsilon^u(x)\cap W_\epsilon^s(y)\ne \emptyset$$
where
$$ W_\epsilon^s(x):=\{y\in X: d(T^n(x), T^n(y))<\epsilon\quad \forall n\geqslant 0\}$$
and
$$ W_\epsilon^u(x):=\{y\in X: d(T^{-n}(x), T^{-n}(y))<\epsilon \quad\forall n\geqslant 0\}.$$
\item [(A3)]
There exist $\eta>0$, $c\geqslant 1$  and $0<\lambda<1$ such that for all $x\in X$ and $n\geqslant 0$
$$ d(T^n x,T^n y)< c\lambda^n d(x,y)\qquad y\in W_{\eta}^s(x)$$
and
$$ d(T^{-n}x,T^{-n}y)< c\lambda^n d(x,y)\qquad y\in W_{\eta}^u(x).$$
\end{enumerate}

Axiom A$^*$ homeomorphisms satisfy the local specification property. Topologically mixing Axiom A$^*$ homeomorphisms satisfy both the local and the global specification properties.

\subsubsection*{Example 2}
Consider a  dynamical system $(X,T)$ which possesses a Markov partition $\alpha$ of  sets $A$ which are contained in the closure of their interior. Then $(X,T)$ is a continuous factor of  a subshift of finite type $(Z,S)$. The periodic points in $Z$ of period $n$ correspond to sets in the refinement $\alpha_0^{n-1}$ and thus define measures on the $\sigma$-field generated by $\alpha_0^{n-1}$. Thus statements about measures on periodic points in $(Z,S)$ correspond to statements on natural measures on $(X,T)$ equipped with finite $\sigma$-fields. The results presented in this article can therefore be applied to this type of dynamical systems.

As an example, consider a hyperbolic rational map $T$ of the Riemannian sphere $\mathbb{S}^2$.
Ma\~n\'e~\cite{Man85} proved that $T$ is semi-conjugated to a
Bernoulli shift %$\left(\{1,...,N\}^{\mathbb N}, \mathcal F, \sigma, (N^{-1},...,N^{-1})^{\mathbb N}\right)$
by a finite-to-one factor map  which is one-to-one almost everywhere.
Hence, the map $T$ on the completed Julia set equipped with the sequence topology trivially satisfies the local specification property.

\subsubsection*{Example 3}
A simple example of a system satisfying the global specification property which is not an Axiom A$^*$ homeomorphism is given by the subshift $\Sigma\subset \{1,...,N\}^{\mathbb Z}$ obtained by excluding a set of blocks from the full shift on $\{1,...,N\}^{\mathbb Z}$ as follows (see \cite[(17.1)]{DenGriSig76}).
For any $n\in\mathbb N$ pick two sequences $[p(n)]\neq [q(n)]$ of length $n$ from $\{1,..., N-1\}^{n}$. Exclude from the set of admissible words the cylinders of the form $\{[p(n)Nq(n)N]:n\in \mathbb N\}$. Consider the usual distance defined by $d(x,y):= r^k$ where $k=\min\{ |l|: x_{l}\ne y_{l} \ \mbox{\rm or}\ x_{-l}\ne y_{-l}\}$ for some $0<r<1$ and denote the shift transformation by $\sigma$. The subshift $\Sigma$ does not have the local specification property. To see this, use the blocks $[p(n)Nq(n)]$ and $[q(n)Np(n)]$ to generate periodic points $x(n)$ and $y(n)$ of period $2n+1$. Then for any $\delta>0$ and $n>-\log_{1/r}\delta$,
$$d(\sigma^{n+1}x(n),y(n))=r^n<\delta\quad\mbox{ and }\quad d(\sigma^{n+1}y(n),x(n))=r^n<\delta.$$
But if $p\in P_{4n+2}$ satisfies $d(\sigma^j p, \sigma^j x(n))<r/2$ and $d(\sigma^{n+1+j} p, \sigma^j y(n))<r/2$ for all $j=0,...,n$, then $p$ contains the excluded block $[p(n)Nq(n)N]$, a contradiction. It can be shown that such systems have the global specification property.

\section{A CLT under global specification}\label{sec:globalCLT}

The global specification property allows us to single out sets of periodic points which exhibit independent structure in the measure theoretical sense, and are spread out over the whole phase space.

\begin{definition}[$\epsilon$-Independence]\label{def:independence}
Let $\epsilon>0$ and $k,n, M\in\mathbb N$. A set ${\mathcal P}\subset P_{k(n+M)}$ is
$\epsilon$-independent if there exist a subset $E\subset P_{n+M}$
which $(n, 3\epsilon)$-spans $P_{k(n+M)}$ and a bijection $\Phi: E^k\to {\mathcal P}$ such that for any ${\bf x}=(x_1,...,x_{k})\in E^k$ and $1\leqslant i\leqslant k$,
\begin{equation}\label{eq:epsind}T^{(i-1)(n+M)}(\Phi({\bf x}))\in B^n_{\epsilon}(x_{i}).\end{equation}
The set $F:=E^k$ will be called the product set of $\mathcal P$.
\end{definition}

Recall that in the definition of global specification (Definition \ref{def:spec}) any given $\epsilon>0$ defines a $M(\epsilon)\in\mathbb N$.
\begin{proposition}\label{prop:indstr}
Let $(X,T)$ satisfy the (global) specification property.
 Then for any $\epsilon>0$, $k,n\in \mathbb N$ and $M\geqslant M(\epsilon)$ there exists a $\epsilon$-independent set $\mathcal P\subset P_{k(n+M)}$.
\end{proposition}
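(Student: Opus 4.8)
The plan is to produce the product set $F=E^k$ first, by choosing a suitable set $E\subset P_{n+M}$, and then to define the bijection $\Phi$ by gluing the chosen periodic orbit pieces together via the specification property. For $E$ I would take a \emph{maximal $(n,2\epsilon)$-separated} subset of $P_{n+M}$. Such a set exists and is finite because $P_{n+M}$ is finite by standing assumption (and it is nonempty: applying the specification property to a single point $x_1\in X$ already yields a point of $P_{n+M}$). The separation scale is chosen to be $2\epsilon$, rather than $\epsilon$, precisely so that the map $\Phi$ constructed below turns out to be injective.

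I would then check that $E$ is $(n,3\epsilon)$-spanning for $P_{k(n+M)}$, as required in Definition~\ref{def:independence}. Fix $y\in P_{k(n+M)}$. Applying the specification property to the single orbit piece of length $n$ starting at $y$ (with the given $\epsilon$ and $M\geqslant M(\epsilon)$) produces a periodic point $p\in P_{n+M}$ with $p\in B^n_\epsilon(y)$, i.e.\ $d_n(p,y)<\epsilon$. By maximality of $E$ there is $q\in E$ with $d_n(p,q)\leqslant 2\epsilon$ (take $q=p$ if $p\in E$; otherwise $E\cup\{p\}$ fails to be $(n,2\epsilon)$-separated). The triangle inequality for $d_n$ then gives $d_n(q,y)<3\epsilon$, so $y\in B^n_{3\epsilon}(q)$, and hence $E$ is $(n,3\epsilon)$-spanning for $P_{k(n+M)}$.

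Finally I would define $\Phi\colon E^k\to P_{k(n+M)}$: given ${\bf x}=(x_1,\dots,x_k)\in E^k$, the specification property (this time applied to all $k$ orbit pieces $x_1,\dots,x_k$ of length $n$, with $M\geqslant M(\epsilon)$) yields a periodic point $p\in P_{k(n+M)}$ with $T^{(i-1)(n+M)}p\in B^n_\epsilon(x_i)$ for $1\leqslant i\leqslant k$; set $\Phi({\bf x})$ equal to one such $p$. Then \eqref{eq:epsind} holds by construction. For injectivity, suppose $\Phi({\bf x})=\Phi({\bf x}')=p$; for each $i$ both $x_i$ and $x_i'$ lie in $B^n_\epsilon(T^{(i-1)(n+M)}p)$, hence $d_n(x_i,x_i')<2\epsilon$, and since $x_i,x_i'\in E$ with $E$ being $(n,2\epsilon)$-separated this forces $x_i=x_i'$, so ${\bf x}={\bf x}'$. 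Taking $\mathcal P:=\Phi(E^k)\subset P_{k(n+M)}$, the map $\Phi\colon E^k\to\mathcal P$ is a bijection and $\mathcal P$ is $\epsilon$-independent with product set $F=E^k$, as desired.

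The construction presents no serious obstacle; the only delicate point is balancing the two competing requirements on $E$ — it must be separated enough (at scale $2\epsilon$) for the gluing map $\Phi$ to be injective, yet, after one application of specification to descend from $P_{k(n+M)}$ to $P_{n+M}$, dense enough to $(n,3\epsilon)$-span $P_{k(n+M)}$ — which is exactly what fixes the constants $2\epsilon$ and $3\epsilon$ appearing in Definition~\ref{def:independence}.
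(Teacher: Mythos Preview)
Your proof is correct and follows essentially the same approach as the paper: choose $E\subset P_{n+M}$ maximal $(n,2\epsilon)$-separated, use specification once to show $E$ is $(n,3\epsilon)$-spanning for $P_{k(n+M)}$, then use specification on $k$-tuples to build $\Phi$ and exploit the $2\epsilon$-separation for injectivity. Your write-up is in fact slightly more detailed (nonemptiness of $E$, the explicit triangle inequality), but the argument is identical in substance.
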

\begin{proof}
Choose a maximal $(n,2\epsilon)$-separated set $E\subset P_{n+M}$, then $E$ $(n,2\epsilon)$-spans $P_{n+M}$ by maximality.
For every $p\in P_{k(n+M)}$ there exists $x\in P_{n+M}\cap B^n_\epsilon(p)$ by specification.
Hence $E$ also $(n,3\epsilon)$-spans $P_{k(n+M)}$.
Again due to specification, for every ${\bf x}=(x_1,\ldots, x_k)\in E^k$, there exists $p\in P_{k(n+M)}$ such that $T^{(i-1)(n+M)}(p)\in B^n_{\epsilon}(x_i)$ for all $i=1,...,k$. This correspondence defines a map
$\Phi:E^k\mapsto P_{k(n+M)}$.
$\Phi$ is injective since $\Phi({\bf x})=\Phi({\bf y})$ implies $x_i\in B^n_{2\epsilon}(y_i)$ for all $i=1,...,k$. As $E$ is $(n,2\epsilon)$-separated, this implies $x_i=y_i$.
Hence ${\mathcal P}:=\Phi(E^k)$ is $\epsilon$-independent.
\end{proof}

We can use this structure to prove a Lindeberg type CLT for Birkhoff sums with respect to the uniform measure on $\epsilon$-independent sets. To this end, we approximate the measure on the $\epsilon$-independent sets by the uniform measure on the product sets.
Recall that for any finite $Z\subset X$, $h\in L^2(\nu_Z)$ and any $\eta> 0$ the \emph{Lindeberg function} is given by
$$ L_Z(h, \eta):= \int \big(h(z)-\mathbb E_Z(h)\big)^2 \mathbbm 1_{\{|h(z)-\mathbb E_Z(h)| >\eta\}}(z)d\nu_Z,$$
and that $\sigma_Z^2$ stands for the variance with respect to the uniform measure $\nu_Z$.
\begin{proposition}\label{prop:indclt} For every $l\in\mathbb N$ consider a finite subset $E_l\subset X$ and the Cartesian product set $F_l=E_l^{k_l}$ for some $k_l\in\mathbb N$. For each $1\leqslant i\leqslant k_l$ let $G_{l,i}:F_l \to\mathbb R$ be a function which depends only on the $i$-th component:
$$G_{l,i}(x_1,\ldots,x_{k_l}) =G_{l,i}(x_i),\quad \forall (x_1,\ldots,x_{k_l})\in F_l.$$
Set $\hat{s}^2_{l}=\sum_{i=1}^{k_l}  \sigma_{F_l}^2(G_{l,i}).$
Then the Lindeberg condition holds, i.e.
\begin{equation}\label{eq:prop1}
\lim_{l\to \infty}\frac 1 {\hat{s}_{l}^2}\sum_{i=1}^{k_l} L_{F_l}(G_{l,i}, \eta\hat s_l) =0, \quad \forall \eta>0
\end{equation}
if and only if the array is asymptotically negligible, i.e.
\begin{equation}\label{eq:propngl}
\lim_{l\to\infty}\max_{1\leqslant i\leqslant k_l}\nu_{F_l}(\{|G_{l,i}-\mathbb E_{F_l}(G_{l,i})|\geqslant \eta\hat s_l\})=0, \quad \forall\eta>0
\end{equation}
and the CLT holds, i.e.
\begin{equation}\label{eq:propclt}
\lim_{l\to\infty}\nu_{F_l}
\left(\bigg\{
\sum_{i=1}^{k_l}\left( G_{l,i} - \mathbb E_{F_l}(G_{l,i})\right)\leqslant t \hat{s}_{l}
\bigg\}\right)
=\mathcal{N}(t), \quad \forall t\in\mathbb R.
\end{equation}
\end{proposition}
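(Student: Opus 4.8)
The plan is to recognize Proposition~\ref{prop:indclt} as a restatement of the classical Lindeberg–Feller theorem for triangular arrays of independent (but not identically distributed) random variables, applied to the probability space $(F_l,\nu_{F_l})$. The key structural observation is that $\nu_{F_l}$, being the uniform measure on a Cartesian product $F_l=E_l^{k_l}$, is exactly the $k_l$-fold product of the uniform measure $\nu_{E_l}$ on $E_l$. Since $G_{l,i}$ depends only on the $i$-th coordinate, the functions $G_{l,1},\ldots,G_{l,k_l}$ are genuinely independent random variables under $\nu_{F_l}$. Writing $\xi_{l,i}:=G_{l,i}-\mathbb E_{F_l}(G_{l,i})$ for the centered variables, we have $\mathbb E_{F_l}(\xi_{l,i})=0$, $\mathrm{Var}_{\nu_{F_l}}(\xi_{l,i})=\sigma^2_{F_l}(G_{l,i})$, and $\sum_{i=1}^{k_l}\mathrm{Var}_{\nu_{F_l}}(\xi_{l,i})=\hat s_l^2$. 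The sum in~\eqref{eq:propclt} is $\sum_{i=1}^{k_l}\xi_{l,i}$, and~\eqref{eq:prop1} is precisely the Lindeberg condition $\hat s_l^{-2}\sum_i \mathbb E_{F_l}(\xi_{l,i}^2\mathbbm 1_{\{|\xi_{l,i}|>\eta\hat s_l\}})\to 0$ for the normalized array $\xi_{l,i}/\hat s_l$.

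First I would verify the product-measure identity and the independence of the $G_{l,i}$ carefully, since this is where the dynamical setup is translated into a purely probabilistic one; once that is done, one sets $\eta_{l,i}:=\xi_{l,i}/\hat s_l$ so that $\sum_i \mathrm{Var}(\eta_{l,i})=1$. The forward direction (Lindeberg $\Rightarrow$ CLT and asymptotic negligibility) is the classical Lindeberg central limit theorem: the CLT~\eqref{eq:propclt} follows from the standard characteristic-function argument (Taylor expansion of $\mathbb E[e^{it\eta_{l,i}}]$, truncation at level $\eta$, and the bound $|e^{ix}-1-ix+x^2/2|\le\min(|x|^2,|x|^3)$), and asymptotic negligibility~\eqref{eq:propngl} is immediate from Chebyshev: for any $\eta>0$,
$$
\max_{1\le i\le k_l}\nu_{F_l}\big(\{|\xi_{l,i}|\ge\eta\hat s_l\}\big)
\le \frac{1}{\eta^2\hat s_l^2}\max_{1\le i\le k_l}\mathbb E_{F_l}\big(\xi_{l,i}^2\mathbbm 1_{\{|\xi_{l,i}|\ge\eta\hat s_l\}}\big)
+ \frac{1}{\eta'^{2}\hat s_l^{2}}\max_i \sigma^2_{F_l}(G_{l,i}),
$$
where one first splits off the tail using a smaller threshold $\eta'<\eta$ and then uses~\eqref{eq:prop1} together with the fact that~\eqref{eq:prop1} forces $\hat s_l^{-2}\max_i\sigma^2_{F_l}(G_{l,i})\to 0$ (bound the variance of each summand by its truncated part plus $\eta'^2\hat s_l^2$). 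For the reverse direction (asymptotic negligibility $+$ CLT $\Rightarrow$ Lindeberg), I would invoke the Feller part of the Lindeberg–Feller theorem: under the uniform asymptotic negligibility condition~\eqref{eq:propngl}, convergence of $\sum_i\eta_{l,i}$ to a standard normal forces the Lindeberg condition. This is the classical converse, proved by analyzing the characteristic functions $\prod_i\mathbb E[e^{it\eta_{l,i}}]$ and using that negligibility makes $\log\prod_i\mathbb E[e^{it\eta_{l,i}}]$ comparable to $\sum_i(\mathbb E[e^{it\eta_{l,i}}]-1)$, whose real part controls the Lindeberg sum from below.

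The main obstacle is not any single estimate — all of these are textbook (e.g.\ the Lindeberg–Feller theorem as in Feller's or Billingsley's book) — but rather the bookkeeping required to state everything so that the classical theorem applies verbatim: one must confirm that finiteness of all moments is automatic (each $G_{l,i}$ takes finitely many values since $E_l$ is finite, so $G_{l,i}\in L^2(\nu_{F_l})$ and indeed $L^\infty$), that the normalization $\hat s_l>0$ holds (if $\hat s_l=0$ for some $l$ the statement is vacuous or trivial, and one may pass to the tail where $\hat s_l>0$), and that the array genuinely has independent rows under the product measure. Once the translation is in place, the proof reduces to citing the Lindeberg–Feller theorem in both directions; I would state it as a lemma with a reference and then apply it. If a self-contained argument is preferred, the forward CLT is the standard Lindeberg swapping/characteristic-function proof and the converse is Feller's argument, each occupying a short paragraph.
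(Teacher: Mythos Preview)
Your proposal is correct and takes essentially the same approach as the paper: both recognize that $\nu_{F_l}$ is the $k_l$-fold product of $\nu_{E_l}$, so that the coordinate functions $G_{l,i}$ are genuinely independent, and then invoke the classical Lindeberg--Feller theorem (the paper cites \cite[Theorem~15.43]{Klenke2014}) for the equivalence. The paper's proof is in fact just the two-sentence citation version of what you outline in detail.
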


\begin{proof}
Recall Lindeberg's CLT for independent random variables (see e.g. \cite[Theorem 15.43]{Klenke2014}): for an independent array of random variables, the Lindeberg condition holds if and only if the array is asymptotically negligible and the CLT holds. Then the proposition follows immediately since the functions $G_{l,i}$ form an independent array on $(F_l, \nu_{F_l})$.
\end{proof}

To realize the previously mentioned approximation, we require the observables to have controlled oscillations along the $\epsilon$-independent sets (see~\eqref{eq:4.2-0}) and to have negligible variance over the gaps (see~\eqref{eq:4.2-1}).
\begin{theorem}\label{theo:1}
Let $\{\epsilon_l>0, M_l, k_l, n_l\in \mathbb N\}_{l\in\mathbb N}$ be sequences of numbers, with $k_l\xrightarrow{l\to\infty}\infty$. Consider a sequence of $\epsilon_l$-independent sets $\mathcal P_{l}\subset P_{k_l(n_l+M_l)}$
and observables $h_l:P\to \mathbb R$ satisfying that
\begin{equation}\label{eq:4.2-0}
\lim_{l\to\infty}\,\frac{1}{s_l^j} \sum_{i=1}^{k_l}
\int \left(\omega_{a_i}^{n_l}(h_l,4\epsilon_l,p)\right)^j d\nu_{\mathcal P_{l}}(p)=0,\quad  j=1,2,
\end{equation}
where $a_{i}=(i-1)(n_l+M_l)$ and
$s_{l}^2=\sum_{i=1}^{k} \sigma^2_{\mathcal P_{l}}(S_{a_{i}}^{n_l} h_l)$, and
that
\begin{equation}\label{eq:4.2-1}
\lim_{l\to\infty}\, \frac1{s^2_l}\ \sigma^2_{\mathcal P_{l}}\left(\sum_{i=1}^{k_l} S_{a_{i}+n_l}^{M_l} h_l\right)=0.
\end{equation}
Then the Lindeberg condition
\begin{equation}\label{eq:epslind}
\lim_{l\to\infty}\frac 1{s_l^2} \sum_{i=1}^{k_l}L_{\mathcal P_l}(S_{a_i}^{n_l} h_l, \eta s_l) =0, \quad \forall \eta>0
\end{equation}
implies the CLT: for every $t\in\mathbb R$
\begin{equation}\label{eq:epsclt}
\lim_{l\to\infty}\nu_{\mathcal P_{l}}
\left(\Big\{
S^{k_l(n_l+M_l)} h_l-\mathbb{E}_{\mathcal{P}_{l}}(S^{k_l(n_l+M_l)} h_l)
\leqslant t s_l
\Big\}\right)=\mathcal{N}(t).\end{equation}

If
\begin{equation}\label{eq:4.2-00}
\lim_{l\to\infty}\,\frac{1}{\sigma^2_{\mathcal P_{l}}(S^{n_l} h_l)}
\int \left(\omega^{n_l}(h_l,2\epsilon_l,p)\right)^2d\nu_{\mathcal P_{l}}(p)=0
\end{equation}
holds additionally then the Lindeberg condition is also necessary.
\end{theorem}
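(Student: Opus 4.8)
The plan is to transport the statement from the $\epsilon_l$-independent set $\mathcal P_l$ to its product set $F_l=E_l^{k_l}$, where the relevant functions form a genuine independent array, and then to invoke Lindeberg's classical theorem in the packaged form of Proposition~\ref{prop:indclt}. First I would dispose of the gaps: writing $N_l=k_l(n_l+M_l)$ and splitting $S^{N_l}h_l=\sum_{i=1}^{k_l}S_{a_i}^{n_l}h_l+R_l$ with $R_l=\sum_{i=1}^{k_l}S_{a_i+n_l}^{M_l}h_l$, the gap condition~\eqref{eq:4.2-1} says precisely that $s_l^{-1}(R_l-\mathbb{E}_{\mathcal P_l}R_l)\to 0$ in $L^2(\nu_{\mathcal P_l})$, hence in probability; so by Slutsky's theorem the CLT~\eqref{eq:epsclt} for $S^{N_l}h_l$ normalized by $s_l$ is equivalent to the CLT for $\sum_i(S_{a_i}^{n_l}h_l-\mathbb{E}_{\mathcal P_l}S_{a_i}^{n_l}h_l)$ normalized by $s_l$, while the Lindeberg condition~\eqref{eq:epslind} involves the blocks only. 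Let $\Phi_l\colon F_l\to\mathcal P_l$ be the bijection of Definition~\ref{def:independence}, which is measure preserving from $(F_l,\nu_{F_l})$ onto $(\mathcal P_l,\nu_{\mathcal P_l})$. On $F_l$ I set $G_{l,i}(x_1,\dots,x_{k_l}):=S^{n_l}h_l(x_i)$, a function of the $i$-th coordinate alone, so that $\{G_{l,i}\}$ is an independent array under $\nu_{F_l}$ with each $G_{l,i}$ carrying the $\nu_{E_l}$-distribution of $S^{n_l}h_l$.

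The core of the argument is the comparison of $S_{a_i}^{n_l}h_l\circ\Phi_l$ with $G_{l,i}$ on $(F_l,\nu_{F_l})$. Since $T^{a_i}\Phi_l(\mathbf x)\in B_{\epsilon_l}^{n_l}(x_i)$ and $x_i\in P$, the definition of the oscillation gives the pointwise bound $|S_{a_i}^{n_l}h_l\circ\Phi_l-G_{l,i}|\le\omega_{a_i}^{n_l}(h_l,4\epsilon_l,\Phi_l(\cdot))$ on $F_l$. Combined with the measure preservation of $\Phi_l$ and the two cases of~\eqref{eq:4.2-0}, this yields $\sum_i\mathbb{E}_{F_l}|S_{a_i}^{n_l}h_l\circ\Phi_l-G_{l,i}|=o(s_l)$ from $j=1$ and $\sum_i\|S_{a_i}^{n_l}h_l\circ\Phi_l-G_{l,i}\|_{L^2(\nu_{F_l})}^2=o(s_l^2)$ from $j=2$. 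From the $L^2$ bound, comparing standard deviations term by term and using the triangle inequality in $\ell^2$ for the vectors $(\sigma_{\mathcal P_l}(S_{a_i}^{n_l}h_l))_i$ and $(\sigma_{F_l}(G_{l,i}))_i$, one gets $\hat s_l/s_l\to 1$, where $\hat s_l^2:=\sum_i\sigma_{F_l}^2(G_{l,i})$; from the same bound and a standard perturbation estimate for Lindeberg sums (two row-wise centred arrays with $\sum_i\|U_{l,i}-V_{l,i}\|_{L^2}^2=o(s_l^2)$ have the same Lindeberg condition at scale $s_l$), the condition~\eqref{eq:epslind} is equivalent to~\eqref{eq:prop1} for $\{G_{l,i}\}$, the normalizers $s_l$ and $\hat s_l$ being interchangeable; and from the $L^1$ bound together with $\hat s_l\sim s_l$ and Slutsky, the CLT~\eqref{eq:epsclt} is equivalent to~\eqref{eq:propclt} for $\{G_{l,i}\}$.

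Granting these equivalences, the forward implication is immediate: \eqref{eq:epslind}$\Rightarrow$\eqref{eq:prop1}$\Rightarrow$\eqref{eq:propclt} by Proposition~\ref{prop:indclt}$\Rightarrow$\eqref{eq:epsclt}. For the converse I would assume~\eqref{eq:epsclt} together with the uniform oscillation condition~\eqref{eq:4.2-00}; then~\eqref{eq:propclt} holds for $\{G_{l,i}\}$, and by Proposition~\ref{prop:indclt} it remains to establish the asymptotic negligibility~\eqref{eq:propngl}. Since the $G_{l,i}$ are identically distributed one has $\sigma_{F_l}^2(G_{l,i})=\hat s_l^2/k_l$, so~\eqref{eq:propngl} follows from $k_l\to\infty$ by Chebyshev; the point of~\eqref{eq:4.2-00}, comparing the per-block oscillation $\omega^{n_l}(h_l,2\epsilon_l,\cdot)$ with the per-block variance $\sigma_{\mathcal P_l}^2(S^{n_l}h_l)$, is to keep this variance non-degenerate and to force the original blocks $\{S_{a_i}^{n_l}h_l\}$—not merely their surrogates on $F_l$—to be asymptotically negligible, which is the property that makes~\eqref{eq:epslind} follow from~\eqref{eq:epsclt}; it is also the condition that survives the passage to the general dynamical arrays of Definition~\ref{def:arraygeral}, where the blocks are no longer identically distributed. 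The perturbation estimate then transports~\eqref{eq:prop1} back to~\eqref{eq:epslind}.

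The step I expect to be the main obstacle is the comparison of the second paragraph: checking that the two cases of the oscillation hypothesis are exactly what is needed to make the passage $\mathcal P_l\leftrightarrow F_l$ transparent, simultaneously, for the normalizing constants, for the centred partial sums, and for the Lindeberg functionals—the $j=2$ case controlling the variances and the truncated second moments, the $j=1$ case controlling the means and the sums themselves. Everything before this reduction (the gap term) is a routine Chebyshev/Slutsky argument, and everything after it (on the product set $F_l$) is the classical Lindeberg theorem already recorded in Proposition~\ref{prop:indclt}.
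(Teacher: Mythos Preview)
Your proof is correct. The forward implication tracks the paper's argument: strip off the gap term via~\eqref{eq:4.2-1} and Slutsky, transport the block array $\{S_{a_i}^{n_l}h_l\}$ on $\mathcal P_l$ to the coordinate functions $G_{l,i}(\mathbf x)=S^{n_l}h_l(x_i)$ on $F_l$ using the pointwise oscillation bound and the two cases of~\eqref{eq:4.2-0}, and apply Proposition~\ref{prop:indclt}. For the converse you take a shorter route than the paper. You observe that the $G_{l,i}$ on $F_l=E_l^{k_l}$ are not merely independent but in fact i.i.d.\ (the same function $S^{n_l}h_l$ evaluated at i.i.d.\ coordinates from $E_l$), so $\sigma^2_{F_l}(G_{l,i})=\hat s_l^2/k_l$ identically and the asymptotic negligibility~\eqref{eq:propngl} follows from $k_l\to\infty$ by Chebyshev alone; in the setting of the theorem as stated this actually dispenses with the extra hypothesis~\eqref{eq:4.2-00}. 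The paper instead uses~\eqref{eq:4.2-00} together with a cyclic-shift bijection $p\mapsto q_i(p)$ on $\mathcal P_l$ to establish the stronger fact~\eqref{eq:4.2-3} that the block variances $\sigma^2_{\mathcal P_l}(S_{a_i}^{n_l}h_l)$ are uniformly comparable across $i$, and only then deduces negligibility. Your shortcut yields a slight strengthening of the converse here; the paper's detour does not rely on the i.i.d.\ structure and therefore carries over to the general dynamical arrays of Definition~\ref{def:arraygeral}, where different blocks may involve different functions $h_{l,i}$ and the i.i.d.\ trick is unavailable---your closing remark correctly identifies this as the natural home of~\eqref{eq:4.2-00}, though the role you assign it (forcing negligibility of the original blocks $S_{a_i}^{n_l}h_l$ rather than of the $G_{l,i}$) is slightly off target, since what Proposition~\ref{prop:indclt} needs is~\eqref{eq:propngl}, which in the present i.i.d.\ situation comes for free.
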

\begin{proof}
First notice that condition~\eqref{eq:4.2-1} on the gaps implies that we can replace $S^{k_l(n_l+M_l)}h_l$ by the dynamical array $\sum_{i=1}^{k_l} S_{a_{i}}^{n_l} h_l$ without affecting the limit distribution.

Define an array $\{G_{l,i}\}$ on $F_l$, the product set of $\mathcal P_l$, by setting $$G_{l,i}(x_1,\ldots, x_{k_l}):=S^{n_l}h_l(x_i)$$ and let $\hat{s}^2_l=\sum_{i=1}^{k_l}\sigma^2_{F_l}(G_{l,i})$.
We show that $\frac{1}{s_l}\sum_{i=1}^{k_l}S_{a_i}^{n_l}h_{l}$ on $(\mathcal P_l, \nu_{\mathcal P_l})$ has the same limit distribution as $\frac{1}{\hat s_l}\sum_{i=1}^{k_l}G_{l,i}$ on $(F_l, \nu_{F_l})$, hence \eqref{eq:epsclt} is equivalent to \eqref{eq:propclt}.
In fact because \begin{equation}\label{eq:epsindaprox}
|G_i(\Phi^{-1}p)-S^{n_l}_{a_i}h_l(p)|\leqslant \omega_{a_i}^{n_l}(h_l,\epsilon_l, p),
\end{equation}
we have
for any $t>0$
$$
\nu_{\mathcal P_l}\left(\Big\{|\sum_{i=1}^{k_l}(G_{l,i}\circ \Phi^{-1}-S_{a_i}^{n_l} h_l|>ts_l\Big\}\right)\leqslant \frac{1}{t s_l}\sum_{i=1}^{k_l}\int \omega_{a_i}^{n_l}(h_l,\epsilon_l,p)d\nu_{\mathcal P_l}.
$$
Then as $l$ tends to $\infty$ the upper bound tends to $0$ by \eqref{eq:4.2-0}. Similar calculation shows that $$\lim_{\l\to\infty}{\hat{s}_l}/{s_l}=1.$$

Next we show that \eqref{eq:epslind} is equivalent to \eqref{eq:prop1}, consequently the first part of the theorem follows from Proposition \ref{prop:indclt}.
For any $\eta>0$, if $|G_{l,i}(\Phi^{-1}p)-\mathbb E_{F_l}(G_{l,i})|>\eta \hat{s}_l$ then by \eqref{eq:epsindaprox} either
$$|S_{a_i}^{n_l} h_l(p)-\mathbb E_{\mathcal P_l}(S_{a_i}^{n_l}h_l)|>\frac{\eta}3 s_l
$$
or else with
$$
\omega_{a_i}^{n_l}(h_l,\epsilon_l,p)>\frac{\eta}3 s_l
$$
since when $l$ is large enough \eqref{eq:4.2-0} yields
$$\int \omega_{a_i}^{n_l}(h_l,\epsilon_l,p)d\nu_{\mathcal P_l}\leqslant \eta \hat{s}_l-\frac{2\eta}{3} s_l.$$
This implies the equivalence of the Lindeberg conditions because
$$
  L_{F_l}(G_{l,i},\eta \hat{s}_l)\lesssim L_{\mathcal P_l}(S^{n_l}_{a_i} h_l, \frac\eta 3 s_l)  +  \int (\omega_{a_i}^{n_l}(h_l,\epsilon_l, p))^2 d\nu_{\mathcal P_l}
 $$
and \eqref{eq:4.2-0} and that the roles of $G_{l,i}$ and $S_{a_i}^{n_l}h_l$ can be switched.

Conversely we need to verify \eqref{eq:propngl}. We will show that the additional oscillation condition \eqref{eq:4.2-00} implies
\begin{equation}\label{eq:4.2-3}
\lim_{l\to\infty}\frac{\sigma^2_{\mathcal P_{l}}(S^{n_l}_{a_{i}}h_l)}{\sigma^2_{\mathcal P_{l}}(S^{n_l} h_l)} =1, \quad \text{ uniformly in } 1\leqslant i\leqslant k_l.
\end{equation}
Recall that there exists a bijection $\Phi$ between $\epsilon$-independent set $\mathcal P_l$ and its product set $F_l:=E_l^{k_l}$. Let $p\in \mathcal P_{l}$ and $\Phi^{-1}(p)=(x_1,\ldots, x_{k_l})\in F_l.$
Fix $i$, let $q_i=q_i(p):=\Phi(x_i, x_{i+1},...,x_{k_l}, x_1,...,x_{i-1})\in\mathcal P_{l}$ where $x_j:=x_{j \!\!\!\mod k_l}$ for $j\geqslant k_l$. Then for all $1\leqslant j\leqslant k_l$
$$T^{a_{j}}q_i\in B_\epsilon^{n_l}(x_{i+j-1}), \text{ hence } T^{a_{j}}q_i\in B_{2\epsilon}^{n_l}(T^{a_{i+j-1}}p).$$
Since $\Phi$ is bijective, so is the map $p\mapsto q_i(p)$ on $\mathcal P_{l}$. Therefore
\begin{align*}
&\quad |\sigma^2_{\mathcal P_{l}}(S_{a_{i}}^{n_l} h_l)-\sigma^2_{\mathcal P_l}(S^{n_l}h_l)|\notag\\
&=\frac{1}{|\mathcal P_{l}|} \cdot\left|\sum_{p\in \mathcal P_{l}}\Big(S^{n_l}_{a_i}h_l(p)-\mathbb E_{\mathcal P_{l}}(S_{a_i}^{n_l}h_l)\Big)^2-\sum_{p\in \mathcal P_{l}}\Big(S^{n_l}h_l(q_i)-\mathbb E_{\mathcal P_{l}}(S^{n_l}h_l)\Big)^2\right|\\
&\leqslant \frac{1}{|\mathcal P_{l}|} \cdot\sum_{p\in\mathcal P_{l}}\bigg(\Delta_p^2+2\Delta_p\cdot \big|S^{n_l}h_l(q_i)-\mathbb E_{\mathcal P_{l}}(S^{n_l}h_l)\big|\bigg)\notag\\
&\leqslant \int \Delta_p^2 \,d\nu_{\mathcal P_{l}}+2\Big(\int \Delta_p^2\,d\nu_{\mathcal P_{l}}\Big)^{1/2}\cdot\sigma_{\mathcal P_{l}}(S^{n_l}h_l)
\end{align*}
where
\begin{align*}
\Delta_p&:=\big|(S^{n_l}_{a_{i}}h(p)- S^{n_l}h_l(q_i))-(\mathbb E_{\mathcal P_{l}}(S_{a_i}^{n_l} h_l)-\mathbb E_{\mathcal P_{l}}(S^{n_l}h_l))\big|\\
&\leqslant \omega^{n_l}(h_l, 2\epsilon_l, q_i)+ \int \omega^{n_l}(h_l, 2\epsilon_l, p)\, d\nu_{\mathcal P_{l}}(p).
\end{align*}
It follows that
$$\left|\frac{\sigma^2_{\mathcal P_{l}}(S^{n_l}_{a_i}h_l)}{\sigma^2_{\mathcal P_{l}}(S^{n_l}h_l)}-1\right|\lesssim \frac{\int (\omega^{n_l}(h_l,2\epsilon_l, p))^2 \nu_{\mathcal P_{l}}}{\sigma^2_{\mathcal P_{l}}(S^{n_l}h_l)}+\frac{(\int (\omega^{n_l}(h_l,2\epsilon_l, p))^2 \nu_{\mathcal P_{l}})^{1/2}}{\sigma_{\mathcal P_{l}}(S^{n_l}h_l)}$$
and hence \eqref{eq:4.2-3}.
Therefore $\{\frac{1}{\hat s_l}(G_{l,i}-\mathbb E_{F_l}(G_{l,i}))\}$ is asymptotically negligible, since
$$\nu_{F_l}(\{|G_{l,i}-\mathbb E_{F_l}(G_{l,i})|\geqslant \eta\hat s_l\})\leqslant \frac{\sigma^2_{F_l}(G_{l,i})}{\eta^2\hat s_l^2}\lesssim \frac{\sigma^2_{\mathcal P_l}(S_{a_i}^{n_l}h_l)}{\eta^2 s_l^2}\lesssim \frac{1}{\eta^2k_l}\xrightarrow{l\to\infty} 0.$$
\end{proof}

Note that a $\epsilon$-independent set $\mathcal P\subset P_{k(n+M)}$ is generally a proper subset of $P_{k(n+M)}$. Nevertheless we still can obtain much information in the limit of the uniform measures on $\mathcal P$ as if in the case of $P_{k(n+M)}$ (see Theorem \ref{theo:weak-convergence}).
On the other hand, we consider here a weighted measure with support $P_{k(n+M)}$ coming naturally from $\mathcal P$ and show that a Lindeberg CLT for this weighted measure can be deduced from the CLT for the uniform measure.
 For every $p\in \mathcal P$ and its counterpart in the product set $(x_1,\ldots,x_k)=\Phi^{-1}(p)\in E^k$, let
$$Q(p):=\{q\in P_{k(n+M)}: d_n(T^{(i-1)(n+M)}q, x_i)<3\epsilon, \forall 1\leqslant i\leqslant k\}.$$
Because $E$ is $(n,3\epsilon)$-spanning for $P_{k(n+M)}$ it follows that
$$P_{k(n+M)}\subset\bigcup_{p\in\mathcal P_{\epsilon}}Q(p)$$
and for every $p\in \mathcal P$ and $q \in Q(p)$, one has
\begin{equation}\label{eq:qpeps}
 d_n(T^{(i-1)(n+M)} p, T^{(i-1)(n+M)} q )< 4\epsilon, \quad \forall 1\leqslant i\leqslant k.
 \end{equation}
However, a point $q\in P_{k(n+M)}$ may belong to multiple $Q(p)$. To account for this multiplicity define a weighted probability measure
on $P_{k(n+M)}$ by
$$\nu^w(A):=\sum_{q\in P_{k(n+M)}}w(q)\,\mathbbm 1_A(q)$$
where
$$w(q):=\frac1{|\mathcal P|}\sum_{\{p\colon q\in Q(p)\}}\frac1{|Q(p)|}.$$

\begin{corollary}\label{cor:1}
Consider $\epsilon_l$-independent sets $\mathcal P_l\subset P_{k_l(n_l+M_l)}$ and observables $h_l$ satisfying the oscillation condition \eqref{eq:4.2-0} and that
$$\lim_{l\to\infty}\frac{k_l^2M_l^2\|h_l\|^2_\infty}{s_l^2}=0.$$
Then the Lindeberg condition \eqref{eq:epslind} implies the CLT with respect to the weighted measure $\nu_l^w$
$$\lim_{l\to\infty}\nu^w_l\left(\Big\{
S^{k_l(n_l+M_l)}h_l
- \mathbb E_{\nu^w_l} (S^{k_l(n_l+M_l)}h_l) \leqslant t s_l\Big\}\right)=\mathcal{N}(t).$$

\end{corollary}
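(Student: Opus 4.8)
The plan is to reduce the CLT for the weighted measure $\nu_l^w$ to the CLT for the uniform measure $\nu_{\mathcal P_l}$ already established in Theorem~\ref{theo:1}, by exhibiting a measure-preserving-in-distribution bridge between the two. The key observation is that $\nu_l^w$ is, by construction, the pushforward under the (multivalued, weighted) correspondence $q\mapsto\{p:q\in Q(p)\}$ of $\nu_{\mathcal P_l}$: concretely, one can realize $\nu_l^w$ as the law of a random point obtained by first drawing $p$ uniformly from $\mathcal P_l$ and then drawing $q$ uniformly from $Q(p)$. Under this coupling, $p$ has law $\nu_{\mathcal P_l}$ and $q$ has law $\nu_l^w$, and by~\eqref{eq:qpeps} the two points stay $4\epsilon_l$-close along each of the $k_l$ orbit pieces of length $n_l$.

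First I would set up this coupling formally and use it to compare the two ergodic sums. Writing $S^{k_l(n_l+M_l)}h_l(q)=\sum_{i=1}^{k_l} S_{a_i}^{n_l}h_l(q)+\sum_{i=1}^{k_l}S_{a_i+n_l}^{M_l}h_l(q)$, I would bound the gap terms crudely by $k_lM_l\|h_l\|_\infty$ for every $q$, and likewise for $p$; the hypothesis $\lim_l k_l^2M_l^2\|h_l\|_\infty^2/s_l^2=0$ guarantees that after dividing by $s_l$ these gap contributions vanish in probability under $\nu_l^w$, exactly as~\eqref{eq:4.2-1} did for $\nu_{\mathcal P_l}$ in the proof of Theorem~\ref{theo:1}. (Note the corollary does \emph{not} assume~\eqref{eq:4.2-1}, so one genuinely needs the sup-norm bound here rather than the variance bound.) So it suffices to prove the CLT for the truncated array $\sum_{i=1}^{k_l}S_{a_i}^{n_l}h_l$ under $\nu_l^w$.

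For the truncated array, the coupling gives, for each $i$,
$$\bigl|S_{a_i}^{n_l}h_l(q)-S_{a_i}^{n_l}h_l(p)\bigr|\leqslant \omega_{a_i}^{n_l}(h_l,4\epsilon_l,p),$$
using~\eqref{eq:qpeps} and the definition of the oscillation (here is where the factor $4\epsilon_l$ in the oscillation condition~\eqref{eq:4.2-0} is used, rather than $\epsilon_l$). Summing over $i$, dividing by $s_l$, and applying Markov's inequality together with~\eqref{eq:4.2-0} for $j=1$, the difference $\frac1{s_l}\sum_i\bigl(S_{a_i}^{n_l}h_l(q)-S_{a_i}^{n_l}h_l(p)\bigr)$ tends to $0$ in probability along the coupling; the same estimate shows the two centering constants $\mathbb E_{\nu_l^w}(\cdot)$ and $\mathbb E_{\nu_{\mathcal P_l}}(\cdot)$ differ by $o(s_l)$. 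Since under the coupling $p\sim\nu_{\mathcal P_l}$ and the centered, normed sum for $p$ converges to $\mathcal N$ by Theorem~\ref{theo:1} (whose hypotheses~\eqref{eq:4.2-0} and the Lindeberg condition~\eqref{eq:epslind} are assumed; note~\eqref{eq:4.2-1} is implied by the stronger sup-norm hypothesis here, so Theorem~\ref{theo:1} applies), Slutsky's lemma transfers the limit to the sum for $q\sim\nu_l^w$, which is the desired conclusion.

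The main obstacle I anticipate is not the convergence argument itself but making the coupling rigorous as a statement about the weighted measure: one must check that drawing $p$ uniformly and then $q$ uniformly from $Q(p)$ indeed produces precisely the weights $w(q)=\frac1{|\mathcal P_l|}\sum_{\{p:q\in Q(p)\}}\frac1{|Q(p)|}$, i.e. that the marginal of $q$ is $\nu_l^w$, and that $P_{k_l(n_l+M_l)}\subset\bigcup_{p\in\mathcal P_l}Q(p)$ (already noted in the excerpt) so that $w$ is a genuine probability weight. Once this bookkeeping is in place the rest is the same Slutsky-type argument used in Theorem~\ref{theo:1}, with the only new input being the replacement of the variance-based gap condition by the uniform sup-norm bound.
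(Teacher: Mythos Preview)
Your proposal is correct and follows essentially the same route as the paper. The paper also reduces to the truncated array via the sup-norm bound (deriving \eqref{eq:4.2-1} from it), uses the same oscillation estimate $|S^{n_l}_{a_i}h_l(p)-S^{n_l}_{a_i}h_l(q)|\leqslant\omega_{a_i}^{n_l}(h_l,4\epsilon_l,p)$ for $q\in Q(p)$, and then transfers the limit law from $\nu_{\mathcal P_l}$ to $\nu_l^w$; the only cosmetic difference is that the paper writes out the sandwich $\nu_{\mathcal P_l}(V_l^-(t))\leqslant\nu_l^w(U_l(t))\leqslant\nu_{\mathcal P_l}(V_l^+(t))$ by hand rather than invoking a coupling and Slutsky. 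The ``obstacle'' you flag is not one: the two-step sampling $p\sim\nu_{\mathcal P_l}$, $q\sim\nu_{Q(p)}$ gives $q$ marginal exactly $\sum_{\{p:q\in Q(p)\}}\frac{1}{|\mathcal P_l|}\frac{1}{|Q(p)|}=w(q)$ by definition, so the coupling is immediate.
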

\begin{proof}
First note that \eqref{eq:4.2-1} is satisfied, because
$$\frac{\sigma^2_{\mathcal P_{l}}(\sum_{i=1}^{k_l} S_{a_{i}+n_l}^{M_l} h_l)}{s^2_l} \lesssim \frac{k_l^2M_l^2\|h_l\|_\infty^2}{s_l^2}\xrightarrow{l\to\infty} 0$$
by the assumption, as well as that $$\lim_{l\to\infty}\frac1{s^2_l}\sigma^2_{\nu_l^w}\left(\sum_{i=1}^{k_l} S_{a_{i}+n_l}^{M_l} h_l\right)=0.$$
So it suffices to show that the dynamical array $\{\frac{1}{s_l}\sum_{i=1}^{k_l}S_{a_i}^{n_l} h_l\}$ has the same limit distribution with respect to $\nu_{\mathcal P_l}$ and to $\nu_l^w$ when centered accordingly.
Note that for every pair of $p\in \mathcal P_{l}$ and $q \in Q_l(p)$,  it follows from \eqref{eq:qpeps} that
$$
\left| \sum_{i=1}^{k_l} \left(S^{n_l}_{a_i} h_l(p) -  S^{n_l}_{a_i} h_l(q)\right) \right|\leqslant \sum_{i=1}^{k_l}  \omega_{a_i}^{n_l}(h_l,4\epsilon_l, p)=:\Omega_l(p).
$$
 Denote
\begin{eqnarray*}
&&U_l(t):=\left\{q\in P_{k(n+M)}\colon \sum_{i=1}^{k_l} \left(S_{a_i}^{n_l} h_l (q)-\mathbb{E}_{\nu^w_{l}}(S^{n_l}_{a_i} h_l)\right)\leqslant t s_{l}\right\}\\
&&V^+_l(t) :=\left\{p\in \mathcal P_{l}\colon \sum_{i=1}^{k_l} \left(S^{n_l}_{a_i} h_l(p)-\mathbb E_{\mathcal P_{l}}(S^{n_l}_{a_i} h_l)\right)\leqslant t s_{l} + \Omega_l(p)+\mathbb{E}_{\mathcal P_{l}}(\Omega_l)\right\}\\
&&V^-_l(t) :=\left\{p\in \mathcal P_{l}\colon \sum_{i=1}^{k_l} \left(S^{n_l}_{a_i} h_l(p)-\mathbb E_{\mathcal P_{l}}(S^{n_l}_{a_i} h_l)\right)\leqslant t s_{l} - \Omega_l(p)-\mathbb{E}_{\mathcal P_l}(\Omega_l)\right\}.
\end{eqnarray*}
Then
$$\bigcup_{q\in U_l(t)}\{p\in\mathcal P_{l}: q\in Q_l(p)\}\subset V^+_l(t),
\quad \bigcup_{p\in V^-_{l}(t)} Q_l(p)\subset U_l(t)$$
and thus
\begin{align*}
\nu^w_{l}( U_l(t))&=\frac 1{|\mathcal P_l|} \sum_{q\in U_l(t)}\sum_{\{p\colon q\in
Q_l(p)\}}\frac{1}{|Q_l(p)|}\\
&\leqslant \frac{1}{|\mathcal P_{l}|}{\sum_{p\in V^+_l(t)}}\sum_{q\in Q_l(p)}\frac{1}{|Q_l(p)|}=\nu_{\mathcal P_{l}}(V^+_l(t)).
\end{align*}
Similarly $\nu^w_{l}( U_l(t))\geqslant \nu_{\mathcal P_{l}}(V_l^-(t)).$
Observe that because of \eqref{eq:4.2-0} $\frac 1{s_l}\Omega_l$ converges to $0$ in probability $\nu_{\mathcal P_l}$.
So $\displaystyle\lim_{l\to\infty}\nu_{\mathcal P_{l}}(V_l^+(t))=\lim_{l\to\infty}\nu_{\mathcal P_{l}}(V_l^-(t))=\mathcal N(t)$ by Theorem \ref{theo:1},  and hence $\displaystyle\lim_{l\to\infty}\nu^w_l( U_l(t))=\mathcal N(t)$ as desired.
\end{proof}

%%%%%%%%%%%%%%%%%%%%%%%%%%%
%%%%%%%%%%%%%%%%%%%%%%%%%%%%%%%%%
\section{A CLT under local specification}\label{sec:localCLT}
Compared to the global specification property, the local specification property allows us to single out sets of periodic points with independence structure in a local scenario in which the positions of certain orbits are specified a priori.

\begin{definition}[Local $\epsilon$-Independence]\label{def:locidp}
Let $\epsilon>0$ and $\mathcal U$  be a family of open sets. Let $A\in \bigvee_{i=0}^{k-1} T^{-in}\mathcal U$ for some $k, n\in\mathbb N$. A subset $\mathcal P\subset P$
is locally $\epsilon$-independent with respect to $A$
if there exist $E_i\subset T^{(i-1)n}A$, $1\leqslant i\leqslant k$,
and a bijection
$\Phi$ from $F:=\prod_{i=1}^{k} E_i$ to ${\mathcal P}$ such that for any ${\bf x}=(x_1,...,x_{k})\in F$ and  $1\leqslant i\leqslant k$.
\begin{equation}\label{eq:6.2}
T^{(i-1)n}(\Phi({\bf x}))\in B^n_{\epsilon}(x_{i}).
\end{equation}
The set $F$ will be called the product set of $\mathcal P$.
\end{definition}

Recall that in the definition of local specification (Definition \ref{def:specloc}) any given $\epsilon>0$ defines a $N(\epsilon)\in\mathbb N$ and a $\delta(\epsilon)>0$.
\begin{proposition}\label{prop:locidp}
Let $(X,T)$ satisfy the local specification property. Then for any $\epsilon>0$, $k\in \mathbb N$, $n\geqslant N(\epsilon)$ and any family $\mathcal U$ of open subsets of diameter at most $\delta(\epsilon)$ and $A\in\bigvee_{i=0}^{k-1} T^{-in}\mathcal U$,
there exists a locally $\epsilon$-independent set $\mathcal P\subset P_m$ with respect to $A$,
 where $m\geqslant kn$ is any given multiple of $n$, and such that
 if in addition the system is $\epsilon^*$-expansive and $\epsilon<\epsilon^*/3$ then $$A\cap P_m\subset \mathcal P\subset B_{\epsilon}^{m}(A)\cap P_m.$$
\end{proposition}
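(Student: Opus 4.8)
The plan is to build the index sets $E_i$ directly from the periodic points already lying in $A$, to splice the corresponding orbit pieces into honest period-$m$ orbits by feeding them to the local specification property together with a ``return'' piece drawn from one of those periodic orbits, and to bring in expansiveness only at the very end, where it is needed just to identify the resulting correspondence on $A\cap P_m$.

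Write $m=rn$ with $r\ge k$. For $1\le i\le k$ let $E_i$ be a maximal $(n,2\epsilon)$-separated subset of $T^{(i-1)n}(A\cap P_m)$; this is finite since $P_m$ is, and each $x_i\in E_i$ determines a unique $\pi_i(x_i)\in A\cap P_m$ with $T^{(i-1)n}\pi_i(x_i)=x_i$, because $T^{(i-1)n}$ is injective on $P_m$ (it has $T^{(r-i+1)n}$ as a left inverse there). Given $\mathbf x=(x_1,\dots,x_k)\in F:=\prod_{i=1}^{k} E_i$, put $q:=\pi_k(x_k)$ and apply local specification, legitimately since $n\ge N(\epsilon)$, to the length-$r$ chain
$$x_1,\dots,x_k,\ T^{kn}q,\ T^{(k+1)n}q,\ \dots,\ T^{(r-1)n}q.$$
The requirement $d\big(T^{n}(\text{$j$th entry}),\ \text{$(j{+}1)$th entry}\big)<\delta(\epsilon)$ holds entry by entry: for $1\le j\le k-1$ both $T^{n}x_j=T^{jn}\pi_j(x_j)$ and $x_{j+1}=T^{jn}\pi_{j+1}(x_{j+1})$ lie in the same member $U_j\in\mathcal U$ (because $A\subset T^{-jn}U_j$), whose diameter is $\le\delta(\epsilon)$; the interior joints of the filler are exact; and the wrap-around joint is $d(T^{rn}q,x_1)=d(q,x_1)\le\operatorname{diam}(U_0)\le\delta(\epsilon)$, using $q\in P_m=P_{rn}$ and $q,x_1\in A\subset U_0$. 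This last identity is the crux: the periodic orbit of $q$ supplies precisely the return path that local specification cannot create on its own. Local specification then yields $p\in P_{rn}=P_m$ with $T^{(i-1)n}p\in B^n_\epsilon(x_i)$ for $1\le i\le k$; set $\Phi(\mathbf x):=p$ and $\mathcal P:=\Phi(F)$. Injectivity is immediate: $\Phi(\mathbf x)=\Phi(\mathbf y)$ forces $d_n(x_i,y_i)<2\epsilon$, hence $x_i=y_i$ by $(n,2\epsilon)$-separatedness, so $\Phi$ is a bijection onto $\mathcal P$, which is by construction locally $\epsilon$-independent with respect to $A$.

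It remains, under $\epsilon^*$-expansiveness with $\epsilon<\epsilon^*/3$, to establish the two inclusions. For $\mathcal P\subset B^m_\epsilon(A)\cap P_m$: each entry of the chain for $p$ has the form $T^{(j-1)n}a_j$ with $a_j\in A$, and $p$ $\epsilon$-shadows it on the window $[(j-1)n,jn)$, so $d(T^\ell p,T^\ell A)<\epsilon$ for all $\ell\in[0,m)$, while $p\in P_m$. For $A\cap P_m\subset\mathcal P$: given $q_*\in A\cap P_m$, pick $x_i\in E_i$ with $d_n(T^{(i-1)n}q_*,x_i)\le 2\epsilon$; then $q_*$ and $\Phi(\mathbf x)$ stay within $3\epsilon<\epsilon^*$ of each other throughout $[0,kn)$. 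If $m=kn$ the data windows already exhaust the period, so positive expansiveness (via periodicity) forces $q_*=\Phi(\mathbf x)\in\mathcal P$, whatever the maximal separated sets happened to be. If $m>kn$ the data windows no longer cover $[0,m)$; to control $[kn,m)$ one is forced to take $E_i=T^{(i-1)n}(A\cap P_m)$ (so that no shift of $q_*$ is discarded and the filler spliced onto $\mathbf x$ is the orbit of $q_*$ itself), which remains compatible with $(n,2\epsilon)$-separatedness because $T^{(i-1)n}$ is injective on $P_m$ and, for $n$ large, the uniform form of $\epsilon^*$-expansiveness separates the finitely many period-$m$ orbits inside a single length-$n$ window; positive expansiveness on $[0,m)$ then again yields $q_*=\Phi(\mathbf x)$.

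The step I expect to be the main obstacle is exactly this tension when $m>kn$: injectivity of $\Phi$ wants the $E_i$ sparse, while putting all of $A\cap P_m$ into the image wants them to be the full shift sets $T^{(i-1)n}(A\cap P_m)$, and the filler spliced onto a tuple $\mathbf x$ must be one of the periodic orbits that $\mathbf x$ encodes so that it can both close the chain and, through expansiveness, certify that $\Phi(\mathbf x)$ equals that orbit. Reconciling these is what forces the uniform-expansiveness input and a careful propagation of the constants $\epsilon<2\epsilon<3\epsilon<\epsilon^*$ across the separation, spanning, $\epsilon$-shadowing and expansiveness estimates. By contrast the rest — choosing the $E_i$, verifying the chain condition, and the splicing via Definition~\ref{def:specloc} — is routine once the periodic return piece is in hand.
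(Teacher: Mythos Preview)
Your construction of $\mathcal P$ and the argument for $m=kn$ are fine and close in spirit to the paper's. The genuine gap is exactly where you flagged it: the inclusion $A\cap P_m\subset\mathcal P$ when $m>kn$. With maximal $(n,2\epsilon)$-separated sets $E_i\subset T^{(i-1)n}(A\cap P_m)$ and filler drawn from $\pi_k(x_k)$, you only control $d_{kn}(\Phi(\mathbf x),q_*)$; on $[kn,m)$ the point $\Phi(\mathbf x)$ shadows the orbit of $q':=\pi_k(x_k)$, and $d_n(T^{(k-1)n}q',T^{(k-1)n}q_*)\le 2\epsilon$ does not force $q'=q_*$. Your proposed fix---take $E_i=T^{(i-1)n}(A\cap P_m)$ in full and invoke expansiveness ``for $n$ large''---cannot work, because the proposition fixes $n\ge N(\epsilon)$ \emph{before} the multiple $m$ is chosen. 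As $m$ grows, $A\cap P_m$ may contain distinct periodic points whose orbits only separate well after time $n$, so the full shift set need not be $(n,2\epsilon)$-separated and you lose injectivity of $\Phi$. The order of quantifiers is fatal here; no amount of propagating the constants $\epsilon<2\epsilon<3\epsilon<\epsilon^*$ repairs it.

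The paper resolves the tension without any extra hypothesis by absorbing the tail $[(k-1)n,m)$ into the last coordinate rather than into a filler: it chooses $E_1,\dots,E_{k-1}$ as maximal $(n,2\epsilon)$-separated subsets of $T^{(i-1)n}A$, but $E_k$ as a maximal $\big(m-(k-1)n,\,2\epsilon\big)$-separated subset of $T^{(k-1)n}A\cap T^{-(m-(k-1)n)}A$. Each $x_k$ then already carries a trajectory of length $m-(k-1)n$ returning to $A$, so local specification (the last block being a multiple of $n$ and hence subdivisible into exact length-$n$ pieces) yields $p\in P_m$ with $T^{(k-1)n}p\in B_\epsilon^{m-(k-1)n}(x_k)$. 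For $q_*\in A\cap P_m$ one has $T^{(k-1)n}q_*\in T^{(k-1)n}A\cap T^{-(m-(k-1)n)}A$, maximality provides $\mathbf y$ with $d_{m-(k-1)n}(y_k,T^{(k-1)n}q_*)<2\epsilon$, hence $d_m(\Phi(\mathbf y),q_*)<3\epsilon<\epsilon^*$, and expansiveness plus periodicity forces $\Phi(\mathbf y)=q_*$. The missing idea is simply to let the \emph{separated} structure of $E_k$, not a choice of periodic filler, encode the whole tail.
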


\begin{proof}
For each $1\leqslant i\leqslant k-1$ choose a maximal $(n,2\epsilon)$-separated set $E_i\subset T^{(i-1)}A$ and a maximal $(m-(k-1)n, 2\epsilon)$-separated set $E_k\subset T^{(k-1)n}A\cap T^{-m+(k-1)n}A$. For any $(x_1,\ldots,x_k)\in F=\prod_{i=1}^{k} E_i$, by the local specification property  there exists $p\in P_m$ such that  $$T^{(i-1)n}p\in B_{\epsilon}^n(x_i) \qquad 1\leqslant i\leqslant k-1$$ and $$T^{(k-1)n}p\in B_{\epsilon}^{m-(k-1)n}(x_k).$$
It defines a map $\Phi$ from $F$ to $P_m$. This map is injective as $E_i$ is $(n,2\epsilon)$-separated for each $1\leqslant i\leqslant k$, hence its image, denoted by $\mathcal P$, is a locally $\epsilon$-independent set with respect to $A$.
Clearly $\mathcal P\subset B_\epsilon^m(A)\cap P_m$.

Suppose now the system is $\epsilon^*$-expansive and $\epsilon<\epsilon^*/3$. For any $q\in A\cap P_m$, due to maximality of $E_i$, there exists $\mathbf y\in F$ such that $d_n(y_i, T^{(i-1)n}q)<2\epsilon, 1\leqslant i\leqslant k-1$ and $d_{m-(k-1)n}(y_k, T^{(k-1)n}q)<2\epsilon$. Therefore expansiveness implies that $q=\Phi(\mathbf y)\in\mathcal P$.
\end{proof}

The local $\epsilon$-independence structure also entails a Lindeberg type CLT. %like the $\epsilon$-independence structure.

\begin{theorem}\label{theo:2}

Let $\{\epsilon_l>0, k_l, n_l\in\mathbb N\}_{l\in\mathbb N}$ be sequences of numbers. Let $\mathcal U_l$ be a family of open sets and $A_l\in \bigvee_{i=0}^{k_l-1}T^{-in_l}\mathcal U_l$. Consider a sequence of locally $\epsilon_l$-independent set $\mathcal P_l$ with respect to $A_l$
and observables $h_{l,i}:P\to\mathbb R$ satisfying  that
$$\lim_{l\to\infty}\frac 1{s_l^j} \sum_{i=1}^{k_l} \int \left(\omega_{a_i}^{n_l}(h_{l,i}, \epsilon_l, p)\right)^j d\nu_{\mathcal P_l}(p)=0, \quad j=1,2,$$
where $a_i=(i-1)n_l$ and
$s_l^2=\sum_{i=1}^{k_l} \sigma^2_{\mathcal P_l}(S^{n_l}_{a_i}h_{l,i})$.
Then the Lindeberg condition holds, i.e.
$$\lim_{l\to\infty}\frac 1{s_l^2} \sum_{i=1}^{k_l}
  L_{\mathcal P_l}(S^{n_l}_{a_i}h_{l,i}, \eta s_l)=0,\qquad\forall\eta>0$$
if and only if
the array $\big\{\frac{1}{s_l}\left(S^{n_l}_{a_i}h_{l,i}-\mathbb E_{\mathcal P_l}(S^{n_l}_{a_i}h_{l,i})\right)\big\}_{1\leqslant i\leqslant k_l, l\in\mathbb N}$ is $\nu_{\mathcal P_l}$-asymptotically negligible, i.e.
$$\lim_{l\to\infty}\max_{1\leqslant i\leqslant k_l}\nu_{\mathcal P_l}\left(\left\{\left|S^{n_l}_{a_i}h_{l,i}-\mathbb E_{\mathcal P_l}(S^{n_l}_{a_i}h_{l,i})\right|\geqslant \eta s_l\right\}\right)=0,\qquad\forall\eta>0$$
and the CLT holds, i.e.
for every $t\in \mathbb R$
$$ \lim_{l\to\infty} \nu_{\mathcal P_l} \left(\bigg\{ \sum_{i=1}^{k_l} \left(S^{n_l}_{a_i}h_{l,i}-\mathbb E_{\mathcal P_l}(S^{n_l}_{a_i}h_{l,i})\right)\leqslant ts_l\bigg\}\right) = \mathcal N(t).$$
\end{theorem}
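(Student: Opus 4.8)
The plan is to mimic the proof of Theorem~\ref{theo:1}, replacing the global $\epsilon$-independence structure with the local one; the argument is in fact shorter, since there are no gaps (so no analogue of the gap condition~\eqref{eq:4.2-1} is needed) and the conclusion is already phrased as the equivalence supplied by Proposition~\ref{prop:indclt}. Fix $l$ and let $\Phi\colon F_l\to\mathcal P_l$ be the bijection from Definition~\ref{def:locidp}, where $F_l=\prod_{i=1}^{k_l}E_{l,i}$ is the product set. Since $\Phi$ is a bijection between finite sets carrying uniform measures, $\nu_{\mathcal P_l}=\Phi_*\nu_{F_l}$. Define the independent array $G_{l,i}\colon F_l\to\mathbb R$ by $G_{l,i}(x_1,\dots,x_{k_l}):=S^{n_l}h_{l,i}(x_i)$, so that $G_{l,i}$ depends only on its $i$-th coordinate, and set $\hat s_l^2=\sum_{i=1}^{k_l}\sigma^2_{F_l}(G_{l,i})$. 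The defining inclusion~\eqref{eq:6.2} gives, for every $p\in\mathcal P_l$,
\begin{equation*}
\bigl|G_{l,i}(\Phi^{-1}p)-S^{n_l}_{a_i}h_{l,i}(p)\bigr|\leqslant\omega_{a_i}^{n_l}(h_{l,i},\epsilon_l,p),\qquad 1\leqslant i\leqslant k_l .
\end{equation*}

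Next I would extract three consequences of the oscillation hypothesis. First, the $j=2$ case, the triangle inequality in $L^2(\nu_{\mathcal P_l})$ and Cauchy--Schwarz yield $|\hat s_l^2/s_l^2-1|\xrightarrow{l\to\infty}0$, hence $\hat s_l/s_l\to 1$. Second, the $j=1$ case and Markov's inequality give $\frac1{s_l}\sum_{i}\bigl(G_{l,i}\circ\Phi^{-1}-S^{n_l}_{a_i}h_{l,i}\bigr)\to 0$ in $\nu_{\mathcal P_l}$-probability; combined with $\hat s_l/s_l\to 1$ this shows that, when centered and normed accordingly, $\frac1{s_l}\sum_i\bigl(S^{n_l}_{a_i}h_{l,i}-\mathbb E_{\mathcal P_l}(S^{n_l}_{a_i}h_{l,i})\bigr)$ under $\nu_{\mathcal P_l}$ and $\frac1{\hat s_l}\sum_i\bigl(G_{l,i}-\mathbb E_{F_l}(G_{l,i})\bigr)$ under $\nu_{F_l}$ have the same limiting distribution, so the CLT passes between the two descriptions. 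Third, since all summands are nonnegative, $\frac1{s_l}\sum_i\int\omega_{a_i}^{n_l}(h_{l,i},\epsilon_l,p)\,d\nu_{\mathcal P_l}\to 0$ forces $\max_i\frac1{s_l}\int\omega_{a_i}^{n_l}(h_{l,i},\epsilon_l,p)\,d\nu_{\mathcal P_l}\to 0$; splitting the events $\{|S^{n_l}_{a_i}h_{l,i}-\mathbb E_{\mathcal P_l}(S^{n_l}_{a_i}h_{l,i})|\geqslant\eta s_l\}$ (and conversely) through the displayed approximation, with a Markov bound on the oscillation part and using $\hat s_l/s_l\to 1$, then shows that $\nu_{\mathcal P_l}$-asymptotic negligibility of $\{\frac1{s_l}(S^{n_l}_{a_i}h_{l,i}-\mathbb E_{\mathcal P_l}(S^{n_l}_{a_i}h_{l,i}))\}$ is equivalent to $\nu_{F_l}$-asymptotic negligibility of $\{\frac1{\hat s_l}(G_{l,i}-\mathbb E_{F_l}(G_{l,i}))\}$.

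The remaining ingredient is the equivalence of the two Lindeberg conditions, argued exactly as in Theorem~\ref{theo:1}: on $\{|G_{l,i}-\mathbb E_{F_l}(G_{l,i})|>\eta\hat s_l\}$, once $l$ is large enough that $\int\omega_{a_i}^{n_l}(h_{l,i},\epsilon_l,p)\,d\nu_{\mathcal P_l}\leqslant\eta\hat s_l-\tfrac{2\eta}3 s_l$, the approximation forces either $|S^{n_l}_{a_i}h_{l,i}-\mathbb E_{\mathcal P_l}(S^{n_l}_{a_i}h_{l,i})|>\tfrac\eta3 s_l$ or $\omega_{a_i}^{n_l}(h_{l,i},\epsilon_l,p)>\tfrac\eta3 s_l$, whence
\begin{equation*}
L_{F_l}(G_{l,i},\eta\hat s_l)\lesssim L_{\mathcal P_l}\bigl(S^{n_l}_{a_i}h_{l,i},\tfrac\eta3 s_l\bigr)+\int\bigl(\omega_{a_i}^{n_l}(h_{l,i},\epsilon_l,p)\bigr)^2\,d\nu_{\mathcal P_l};
\end{equation*}
summing over $i$, dividing by $s_l^2$ (using $\hat s_l/s_l\to 1$) and invoking the $j=2$ oscillation condition gives one implication, and exchanging the roles of $G_{l,i}$ and $S^{n_l}_{a_i}h_{l,i}$ gives the other. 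With all three transfers in hand, I would apply Proposition~\ref{prop:indclt} to the independent array $\{G_{l,i}\}$ on $(F_l,\nu_{F_l})$ — for which the Lindeberg condition is equivalent to asymptotic negligibility together with the CLT — and transport each of the three statements to $(\mathcal P_l,\nu_{\mathcal P_l})$, which yields the theorem.

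The main obstacle is the uniform-in-$i$ bookkeeping needed to transfer asymptotic negligibility and the Lindeberg condition: the oscillation hypothesis controls only a sum over $i$, not a maximum, and one must simultaneously reconcile the two centerings ($\mathbb E_{\mathcal P_l}$ versus $\mathbb E_{F_l}$) and the two normings ($s_l$ versus $\hat s_l$). The resolution is that nonnegativity of the summands upgrades the summed oscillation bound to a bound on the maximal summand, while $\hat s_l/s_l\to 1$ absorbs the discrepancy in centerings and normings at the cost of harmless constants; everything else is the same routine estimation as in the proof of Theorem~\ref{theo:1}.
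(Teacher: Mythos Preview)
Your proposal is correct and follows essentially the same approach as the paper: define the auxiliary independent array $G_{l,i}(x_1,\dots,x_{k_l})=S^{n_l}h_{l,i}(x_i)$ on the product set $F_l$, use the inclusion~\eqref{eq:6.2} to obtain $|G_{l,i}\circ\Phi^{-1}-S^{n_l}_{a_i}h_{l,i}|\leqslant\omega^{n_l}_{a_i}(h_{l,i},\epsilon_l,\cdot)$, and then transfer each of the three statements (Lindeberg, asymptotic negligibility, CLT) between the two arrays via the oscillation hypothesis and Proposition~\ref{prop:indclt}. The paper's proof is in fact briefer than yours---it simply refers back to the estimates in Theorem~\ref{theo:1}---while you spell out the three transfers more explicitly; your observation that no analogue of~\eqref{eq:4.2-00} is needed here, because the conclusion is already the full equivalence of Proposition~\ref{prop:indclt}, is exactly the point.
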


\begin{proof}
Like the proof of Theorem \ref{theo:1}, define an array $\{G_{l,i}\}$ on $F_l$, the product set of $\mathcal P_l$, by setting
\begin{equation*}
G_{l,i}(x_1,...,x_{k_l}):= S^{n_l} h_{l,i}(x_i), \quad 1\leqslant i\leqslant k_l
\end{equation*}
and let $\hat{s}_l^2=\sum_{i=1}^{k_l} \sigma^2_{F_l}(G_{l,i})$.
Due to the structure of locally independent sets, specifically by \eqref{eq:6.2}, for any $p\in\mathcal P_l$ one has similar to \eqref{eq:epsindaprox}
$$\left|G_{l,i}(\Phi^{-1}p)-S^{n_l}_{a_i} h_{l,i}(p)\right|\leqslant \omega^{n_l}_{a_i}(h_{l,i}, \epsilon_l, p).$$
Then one can show that each statement for the dynamical array $\{\frac{1}{s_l}(S_{a_i}^{n_l}h_{l,i}-\mathbb E_{\mathcal P_l}(S^{n_l}_{a_i}h_{l,i}))\}$ in the theorem is equivalent to the statement for the array $\{\frac{1}{\hat s_l}(G_{l,i}-\mathbb E_{F_l}(G_{l,i}))\}$ in Proposition \ref{prop:indclt}, much in the same way as illustrated in Theorem \ref{theo:1}.
\end{proof}

\section{Applications: Measures of maximal entropy}\label{sec:mme}
In this section we provide application to system with a unique measure of maximal entropy, since such a measure is the weak-* limit of the equidistributed Dirac measures on periodic points. However, we believe our methods apply in more generality.

Note that in a positively expansive system satisfying global specification property the measure of maximal entropy is unique (by adapting Bowen's proof \cite{Bow71, Bow74} in the homeomorphism case or by taking the natural extension \cite{Rue92}).
Under certain conditions the measure of maximal entropy is in fact the limit of the equidistributed Dirac measures on the $\epsilon$-independent sets.
This is the case if, for instance:
\begin{equation}\label{eq:pphomeo}
T|_{P_n} \text{ is a homeomorphism for every } n. \tag{*}\footnote{This is automatic if $T$ is a homeomorphism.}
\end{equation}

Recall that in the global specification property the length of the gap $M(\epsilon)$ depends on $\epsilon>0$ (see Definition~\ref{def:spec} and Proposition~\ref{prop:weakspec}). In this section, for a sequence $\{\epsilon_l\}_{l\in\mathbb N}$, $M(\epsilon_l)$ will be abbreviated to $M_l$.
\begin{theorem}\label{theo:weak-convergence}
Let $(X,T)$ be a $\epsilon^*$-positively expansive system satisfying global specification property. Let $\mu$ be the unique measure of maximal entropy. Assume condition \eqref{eq:pphomeo}.
Given any $0<\epsilon<\epsilon^*/8$, then for any sequence of integers $k_l\in\mathbb N$, $n_l\xrightarrow{l\to\infty}\infty$ and
$\epsilon$-independent sets $\mathcal P_l\subset P_{k_l(n_l+M(\epsilon))}$, $$\nu_{\mathcal P_l}\xRightarrow{l\to\infty}\mu.$$
\end{theorem}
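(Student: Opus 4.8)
The plan is to show that $\nu_{\mathcal P_l}$ converges weakly to $\mu$ by combining two facts: that any weak-* limit point of $\nu_{\mathcal P_l}$ is $T$-invariant, and that any such limit point has entropy equal to the topological entropy $h_{\mathrm{top}}(T)$, so that uniqueness of the measure of maximal entropy forces the limit to be $\mu$. Since $X$ is compact, the space of Borel probability measures is weak-* compact, so it suffices to identify all subsequential limits. Passing to a subsequence, assume $\nu_{\mathcal P_l}\Rightarrow\nu$ for some probability measure $\nu$.

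First I would establish approximate invariance. A point $p\in\mathcal P_l$ lies in $P_{k_l(n_l+M_l)}$, so $\nu_{\mathcal P_l}$ is "almost" invariant: for a fixed continuous $g$, $\int g\circ T\, d\nu_{\mathcal P_l} - \int g\, d\nu_{\mathcal P_l}$ is an average, over $p\in\mathcal P_l$, of telescoping differences along the periodic orbit of $p$, and since each orbit has period $k_l(n_l+M_l)\to\infty$ this difference is $O(\|g\|_\infty/(k_l(n_l+M_l)))\to 0$. Hence $\nu$ is $T$-invariant.

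The core of the argument is the entropy lower bound $h_\nu(T)\geqslant h_{\mathrm{top}}(T)$; the reverse inequality is automatic by the variational principle (expansiveness also guarantees the sup is attained and the entropy map is upper semicontinuous, which is what actually lets us pass to the limit). To get the lower bound, I would exploit the product structure of the $\epsilon$-independent set: $\mathcal P_l=\Phi(E_l^{k_l})$ where $E_l\subset P_{n_l+M_l}$ is $(n_l,2\epsilon)$-separated and $(n_l,3\epsilon)$-spans $P_{k_l(n_l+M_l)}$. The number of periodic orbits grows like $e^{n h_{\mathrm{top}}(T)}$ (for expansive systems with specification, $\frac1n\log|P_n|\to h_{\mathrm{top}}(T)$, using specification for the lower bound and a standard separated-set argument for the upper bound), so $|E_l|$ is comparable to $e^{n_l h_{\mathrm{top}}(T)}$ up to subexponential corrections coming from $M(\epsilon)$ being fixed and from the gap between spanning and separation constants; consequently $|\mathcal P_l|=|E_l|^{k_l}$ is roughly $e^{k_l n_l h_{\mathrm{top}}(T)}$. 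Then I would run the classical Misiurewicz-type / Bowen argument: the points of $\mathcal P_l$ are $(N_l,\epsilon)$-separated for $N_l=k_l(n_l+M_l)$ (this needs $\epsilon<\epsilon^*/8$ so that the $2\epsilon$-separation of the $E_l$ blocks, combined with the $\epsilon$-shadowing in \eqref{eq:epsind}, forces the whole orbits of distinct $p,q\in\mathcal P_l$ to be $\epsilon$-separated somewhere within time $N_l$), and for a fixed finite partition $\xi$ of small diameter one estimates $H_{\nu_{\mathcal P_l}}\big(\bigvee_{j=0}^{N_l-1}T^{-j}\xi\big)\geqslant \log|\mathcal P_l| - (\text{boundary corrections})$ because the uniform measure on an $(N_l,\epsilon)$-separated set spreads nearly one point per atom of the refined partition; dividing by $N_l$ and letting $l\to\infty$ along the subsequence, together with the fact that $M_l=M(\epsilon)$ is fixed so $N_l/(k_l n_l)\to 1$, yields $h_\nu(T,\xi)\geqslant h_{\mathrm{top}}(T)$ after taking $\mathrm{diam}\,\xi\to 0$ and using upper semicontinuity of $\mu\mapsto h_\mu$ under expansiveness to pass the entropy through the weak-* limit.

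The step I expect to be the main obstacle is the last one: making the passage to the limit rigorous, i.e. converting the combinatorial lower bound $\frac1{N_l}\log|\mathcal P_l|\to h_{\mathrm{top}}(T)$ into a lower bound on $h_\nu(T)$ rather than merely on the "finite-time" entropies $\frac1{N_l}H_{\nu_{\mathcal P_l}}(\xi_0^{N_l-1})$. This is exactly the delicate point in the Misiurewicz proof of the variational principle: one fixes a partition $\xi$ whose boundary has $\nu$-measure zero (possible by choosing endpoints generically), uses weak-* convergence to compare $H_{\nu_{\mathcal P_l}}$ on a block of length $q$ with $H_\nu$ on the same block, sums over $N_l/q$ blocks with an error $O(q/N_l)$ plus boundary terms, and only then lets $l\to\infty$, $q\to\infty$, $\mathrm{diam}\,\xi\to0$ in the correct order. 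The role of condition \eqref{eq:pphomeo} is to ensure that the dynamics on $P_n$ is genuinely a permutation of the orbit pieces, so that the uniform measures on $\mathcal P_l$ are the right objects and their weak-* limit is honestly $T$-invariant with the full entropy; I would need to check it enters the approximate-invariance and orbit-counting steps as claimed. The hypothesis $n_l\to\infty$ is essential here: it is what makes $N_l/(k_l n_l)\to1$ and kills the fixed-gap contribution, and also what makes the per-block entropy $\frac1{n_l}\log|E_l|$ converge to $h_{\mathrm{top}}(T)$ rather than to some smaller quantity.
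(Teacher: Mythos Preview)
Your invariance argument has a genuine error. The measure $\nu_{\mathcal P_l}=\frac{1}{|\mathcal P_l|}\sum_{p\in\mathcal P_l}\delta_p$ is the uniform measure on the \emph{points} of $\mathcal P_l$, not on their orbits. The telescoping trick you invoke works for the orbit measure $\frac{1}{N}\sum_{j=0}^{N-1}\delta_{T^jp}$, but for $\nu_{\mathcal P_l}$ one has
\[
\int g\circ T\,d\nu_{\mathcal P_l}-\int g\,d\nu_{\mathcal P_l}=\frac{1}{|\mathcal P_l|}\sum_{p\in\mathcal P_l}\bigl(g(Tp)-g(p)\bigr),
\]
and nothing cancels: $\mathcal P_l$ is a \emph{proper} subset of $P_{k_l(n_l+M)}$ and is generally \emph{not} $T$-invariant (the map $T$ permutes all of $P_N$, not $\mathcal P_l$). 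So there is no reason for this sum to be small, and the period $N_l\to\infty$ is irrelevant here.

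The paper handles invariance by a genuinely different mechanism, and this is exactly where condition \eqref{eq:pphomeo} is used. For each $p\in\mathcal P_l$ one uses that $E_l$ $(n_l,3\epsilon)$-spans $P_{k_l(n_l+M)}\ni Tp$ to find $\mathbf y\in E_l^{k_l}$ shadowing the blocks of $Tp$, and sets $q(p):=\Phi_l(\mathbf y)\in\mathcal P_l$. Then $d_{n_l}(Tp,q(p))<4\epsilon<\epsilon^*$, so expansiveness together with $n_l\to\infty$ forces $d(Tp,q(p))\to 0$ uniformly in $p$. The key point is that $p\mapsto q(p)$ is \emph{injective}: if $q(p)=q(p')$ then the block orbits of $Tp$ and $Tp'$ are $6\epsilon<\epsilon^*$-close, and one uses \eqref{eq:pphomeo} to propagate this closeness through the gaps and conclude $p=p'$ by expansiveness. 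Injectivity on a finite set gives bijectivity, hence $\sum_p g(q(p))=\sum_p g(p)$, and the invariance follows from $|g(Tp)-g(q(p))|\to 0$. Your proposal does not supply this bijection, and your guess about the role of \eqref{eq:pphomeo} misses the target.

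For the entropy lower bound the paper also takes a different route from your Misiurewicz sketch: rather than counting separated points in $\mathcal P_l$, it fixes $m$ and bounds $\nu(A)\leqslant C/r_{m,\epsilon^*}(X)$ for every atom $A\in\bigvee_{i=0}^{m-1}T^{-i}\alpha$ directly, via a map $\kappa:R\times(E_l\cap \hat B_{2\epsilon}^m(A))\to E_l$ (built from Proposition~\ref{prop:weakspec}) with fibers of size at most $c^{2M}$. This yields $H_\nu(\alpha_0^{m-1})\geqslant \log r_{m,\epsilon^*}(X)-\log C$ immediately, with no need to pass entropy through a weak-* limit. Your Misiurewicz approach could in principle be made to work, but it is more delicate than you suggest (e.g.\ the points of $\mathcal P_l$ are not obviously $(N_l,\epsilon)$-separated from the definition---the block separation $2\epsilon$ is eaten by the two $\epsilon$-shadowings), and in any case the invariance gap above has to be fixed first.
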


\begin{proof}
Note that a positively expansive transformation has finite entropy. We first prove that
any weak accumulation measure $\nu$ is $T$-invariant.
Let wlog $\nu=\text{w-}\lim_{l\to\infty}\nu_{\mathcal{P}_l}$. Recall $E_l\subset P_{k_l(n_l+M)}$  from  Definition~\ref{def:independence}
with the bijection $\Phi_l: E_l^{k_l}\to \mathcal P_l$. In the rest of the proof we will write $k,n$ for $k_l, n_l$ for simplicity, where the dependence is self-evident.

Given $p\in\mathcal P_l$, since $E_l$ $(n,3\epsilon)$-spans $P_{k(n+M)}$ and $Tp\in P_{k(n+M)}$, there exists $\mathbf{y}=(y_1,\ldots,y_{k})\in E_l^k$ such that for any $1\leqslant i\leqslant k$
$$ T^{(i-1)(n+M)}\left(Tp\right)\in B^n_{3\epsilon}(y_i).$$
Let $q(p):=\Phi_l(\mathbf{y})\in\mathcal P_l$, then
$T^{(i-1)(n+M)}(q(p))\in B_\epsilon^n(y_i)$,
hence
$$d_n(Tp, q(p))\leqslant d_n(Tp, y_1)+d_n(y_1, q(p))<4\epsilon<\epsilon^*.$$
Since $T$ is $\epsilon^*$-positively expansive and $n\xrightarrow{l\to\infty}\infty$,
$$ \lim_{l\to \infty} d(Tp, q(p))=0$$
uniformly in $p$, that is, for any $\delta>0$ there exists $\ell(\delta)\in\mathbb N$ such that for any $l>\ell(\delta)$ and any $p\in\mathcal P_l$, $d(Tp, q(p))<\delta$.
Moreover, the map $p\mapsto q(p)$ is injective:
for $p, p'\in \mathcal P_l\subset P_{k(n+M)}$, if $q(p)=q(p')$ then they correspond to the same $\mathbf y\in E_l^k$, hence for all $1\leqslant i\leqslant k$
$$d_n(T^{(i-1)(n+M)}Tp, T^{(i-1)(n+M)}Tp')< 6\epsilon<\epsilon^*.$$
Therefore uniformly
$$\lim_{l\to\infty} d(T^{(i-1)(n+M)}Tp, T^{(i-1)(n+M)}Tp')=0.$$
With condition \eqref{eq:pphomeo}
it implies that for large $l$ and all $1\leqslant i\leqslant k$
$$d_M(T^{i(n+M)+n}Tp, T^{i(n+M)+n}Tp')<\epsilon^*.$$
Thus orbits of $p$ and $p'$ are always $\epsilon^*$-close and $p=p'$ by expansiveness.
Finally, for any $\delta>0$, any Lipschitz function $f$ and $l>\ell(\delta)$,
$$\left| \int  f(Tp) \,d\nu_{\mathcal P_l}(p) - \int f(p) \,d\nu_{\mathcal P_l}(p)\right| \leqslant
\frac{1}{|\mathcal P_l|}\sum_{p\in \mathcal P_l}
|f(Tp)-f(q(p))| \leqslant \delta\cdot Lip(f).$$
Letting $l\to\infty$ proves the $T$-invariance of $\nu$.

We now prove that the metric entropy $h(\nu)$ of any weak accumulation point $\nu$ of the sequence $\nu_{\mathcal{P}_l}$ agrees with the topological entropy $h(T)$.

Let $\alpha$ be a partition of $X$ into Borel sets $A\in\alpha$ of diameter $<\epsilon$ with $\nu(\partial A)=0$, where $\partial A$ denotes the boundary of $A$.
For any $m\geqslant 1$ and $A\in \bigvee_{i=0}^{m-1} T^{-i}\alpha$ one has
$$
\nu(A)=\lim_{l\to\infty} \nu_{\mathcal P_l}(A)=\lim_{l\to\infty}
\frac{|\mathcal P_l\cap A|}{|\mathcal P_l|}.$$
Denote the maximal cardinality of all $(m, \epsilon^*)$-separated sets of $X$ by $r_{m,\epsilon^*}(X)$. We claim there exists $C=C(\epsilon)>0$ such that
\begin{equation}
\label{claim:upper-bound}
\lim_{l\to\infty}
\frac{|\mathcal P_l\cap A|}{|\mathcal P_l|}\leqslant
\frac{C}{r_{m,\epsilon^*}(X)}.
\end{equation}
Assuming this claim it follows that
$$ \frac 1m H_\nu\left(\bigvee_{i=0}^{m-1} T^{-i}\alpha\right) \geqslant \frac 1m \log \left(\frac{ r_{m,\epsilon^*}(X)}{C}\right)$$
and, letting $m\to\infty$ yields
$$ h_\nu(T,\alpha)\geqslant  h(T).$$
Since $(X,T)$ is $\epsilon^*$-positively expansive, partitions by sets of diameter $<\epsilon^*$ are generating and thus $h_\nu(T)=h_\nu(T,\alpha)\geqslant  h(T)$.
The reversed inequality is well-known.

We now prove claim~\eqref{claim:upper-bound}. Assume $l$ large such that $n>m+2M-1$ and consider $p\in\mathcal P_l\cap A$. Let $(x_1, \ldots,x_{k})=\Phi_l^{-1}(p)$, then $p\in B_\epsilon^n(x_1)$ and $\displaystyle{\sup_{0\leqslant i\leqslant m-1}diam(T^iA)<\epsilon}$, hence
$$
x_1\in \hat B^m_{2\epsilon}(A):=\{x:d_m(x,y)<2\epsilon, \forall y\in A\}.
$$
It follows that
$$|\mathcal P_l\cap A|\leqslant |E_l\cap \hat B^m_{2\epsilon}(A)|\cdot |E_l|^{k-1}.$$
Given a $(m,\epsilon^*)$-separated set $R$ of maximal cardinality, define a map
$$\kappa: R\times \{E_l\cap \hat B_{2\epsilon}^m(A)\} \to  E_l$$
for which the number of preimages is bounded by a constant $C(\epsilon)$ so that
$$ r_{m,\epsilon^*}(X)\cdot |E_l\cap \hat B^m_{2\epsilon}(A)| \leqslant C |E_l|,$$
which in turn implies claim~\eqref{claim:upper-bound}.
Define $\kappa$ as follows. By
Proposition~\ref{prop:weakspec} for any $(z,x)\in R\times \{E_l\cap \hat B_{2\epsilon}^m(A)\}$, and since $n>m+2M-1$, there exists $p\in P_{n+M}$ with
$$p\in B_{\epsilon}^m(z) \quad \text{ and  }\quad
 T^{m+M}p\in B_\epsilon^{n-(m+M)}\left(T^{m+M}x\right).$$
Since $E_l$ is $(n,3\epsilon)$-spanning for $P_{k(n+M)}$ there exists $y\in E_l\cap  B_{3\epsilon}^n(p)$.
The map $\kappa$ is defined by choosing $\kappa(z,x):=y$.
To prove the uniform bound on the number of pre-images of $\kappa$, assume
$\kappa(z,x)=\kappa(z',x')=y$ for some $y\in E_l$.
Denote by $p, p'\in P_{n+M}$ the associated periodic points. It follows that
$$ d_n(p,p')< 6\epsilon\quad\mbox{ and }\quad
d_m(z,z')< 8\epsilon<\epsilon^*.$$
But since $z, z'\in R$ and $R$ is $(m,\epsilon^*)$-separated, it follows that $z=z'$.
Moreover, since $x,x'\in \hat B_{2\epsilon}^m(A)$
we have
$$ d_m(x,x')< 4\epsilon<\epsilon^*$$
and also
$$ d_{n-(m+M)}(T^{m+M}x, T^{m+M}x')<8\epsilon<\epsilon^*.$$
Therefore if $x\ne x'\in E_l\subset P_{n+M}$, there exists $t\in [m,m+M]\cup[n,n+M]$ with
$$ d(T^t(x), T^t(x'))>\epsilon^*.$$
Let $c$ denote the minimal amount of balls of radius $<\epsilon^*/2$ necessary to cover $X$. Then for fixed $x\in E_l\cap \hat B_{2\epsilon}^m(A)$ there are at most $c^{2M}$ points $x'$ mapping to the same image under $\kappa$. Hence setting
$ C:= c^{2M}$ proves the statement.
\end{proof}

Moreover, applying a diagonal argument to the previous theorem yields:

\begin{corollary} \label{cor:wkcvg}
Let $(X,T)$ be a positively expansive system satisfying the global specification property. Let $\mu$ be the unique measure of maximal entropy. Assume condition \eqref{eq:pphomeo}. Then, given sequences $\{\epsilon_l>0, k_l\in\mathbb N\}_{l\in\mathbb N}$ with $\epsilon_l\to 0$ and $k_l\to\infty$, there exists $n_l^*\to\infty$ such that for any sequence $\{n_l\}$ with $n_l> n^*_l$ and any $\epsilon_l$-independent sets $\mathcal P_l\subset P_{k_l(n_l+M_l)}$, $\nu_{\mathcal P_l}\Rightarrow \mu$.
\end{corollary}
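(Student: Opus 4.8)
The plan is to apply Theorem~\ref{theo:weak-convergence} separately for each index $l$, holding the value $\epsilon=\epsilon_l$ \emph{fixed}, and then to diagonalize over $l$. Since $\epsilon_l\to 0$ there is an $l_0$ with $\epsilon_l<\epsilon^*/8$ for all $l\geqslant l_0$; the finitely many indices $l<l_0$ are irrelevant for weak convergence of the sequence $\{\nu_{\mathcal P_l}\}$, so for those we simply set $n_l^*:=l$ and forget about them.

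Fix $l\geqslant l_0$ and recall that $M_l=M(\epsilon_l)$. Applying Theorem~\ref{theo:weak-convergence} with $\epsilon:=\epsilon_l$ to the constant sequence of integers $k_m\equiv k_l$ together with an arbitrary sequence $n_m\to\infty$, we learn that for every choice of $\epsilon_l$-independent sets $\mathcal Q_m\subset P_{k_l(n_m+M_l)}$ one has $\nu_{\mathcal Q_m}\Rightarrow\mu$. Let $\rho$ be a metric inducing the weak topology on the (compact) space of Borel probability measures on $X$. The first key step is to upgrade this "every sequence converges" statement to a uniform threshold: there exists $N_l\in\mathbb N$ such that $\rho(\nu_{\mathcal Q},\mu)<\tfrac1l$ for every $n>N_l$ and every $\epsilon_l$-independent set $\mathcal Q\subset P_{k_l(n+M_l)}$. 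Indeed, were this false, one could pick $n_m\to\infty$ and $\epsilon_l$-independent sets $\mathcal Q_m\subset P_{k_l(n_m+M_l)}$ with $\rho(\nu_{\mathcal Q_m},\mu)\geqslant\tfrac1l$, contradicting the convergence just obtained; here one uses Proposition~\ref{prop:indstr} (with $M_l\geqslant M(\epsilon_l)$) to guarantee that such sets $\mathcal Q_m$ exist for every $n_m$.

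The second key step is the diagonalization. Set $n_l^*:=\max\{N_l,l\}$ for $l\geqslant l_0$ (and $n_l^*:=l$ for $l<l_0$); then $n_l^*\to\infty$. Given any sequence $n_l>n_l^*$ and any $\epsilon_l$-independent sets $\mathcal P_l\subset P_{k_l(n_l+M_l)}$, applying the threshold bound with $\mathcal Q=\mathcal P_l$ gives $\rho(\nu_{\mathcal P_l},\mu)<\tfrac1l$ for all $l\geqslant l_0$, hence $\rho(\nu_{\mathcal P_l},\mu)\to 0$, i.e. $\nu_{\mathcal P_l}\Rightarrow\mu$.

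No new dynamical input beyond Theorem~\ref{theo:weak-convergence} and Proposition~\ref{prop:indstr} is required: for each fixed $l$ the gap $M_l$ is a fixed integer and $\epsilon_l<\epsilon^*/8$, so the theorem applies verbatim; the fact that $M_l$ may tend to infinity is harmless because the threshold $N_l$ is chosen per index. The only step needing a little care is the passage to the uniform threshold $N_l$, which is the standard compactness/contradiction argument recalled above; the hypothesis $k_l\to\infty$ is not actually used here and is retained only for compatibility with the later applications (e.g. Theorem~B).
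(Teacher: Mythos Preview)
Your proposal is correct and is precisely the diagonal argument the paper alludes to; the paper gives no proof beyond the single phrase ``applying a diagonal argument to the previous theorem yields,'' so your write-up is a faithful expansion of that hint. Two minor remarks: the invocation of Proposition~\ref{prop:indstr} in the contradiction step is unnecessary (the negation itself furnishes the bad sets $\mathcal Q_m$), and your observation that the hypothesis $k_l\to\infty$ plays no role in this corollary is accurate.
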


\begin{theorem}\label{theo:3}
Let $(X,T)$ be a positively expansive system satisfying the global specification property. Let $\mu$ be the unique measure of maximal entropy.  Assume condition \eqref{eq:pphomeo}. Let $\{\epsilon_l>0, k_l, n_l\in\mathbb N\}_{l\in\mathbb N}$ be sequences such that $\epsilon_l\to 0, k_l^{1-\eta_l^*}\to\infty, \epsilon_lk_l^{(1+\eta_l^*)/2}\to 0, n_l>n^*_l$ and ${k_l^{1+\eta_l^*} M_l^2}/{n_l^2}\to 0$ as $l\to\infty$ for some $\eta_l^*\to1^-$.
Consider  a sequence of $\epsilon_l$-independent sets ${\mathcal P}_{l}\subset P_{k_l(n_l+M_l)}$ and a Lipschitz function $h$.
Then
$$
\lim_{l\to\infty}\nu_{\mathcal P_{l}}
\left(\bigg\{
\frac{1}{k_l(n_l+M_l)}\big(S^{k_l(n_l+M_l)}h -\mathbb{E}_{\mathcal P_{l}}(S^{k_l(n_l+M_l)}h)\big)
\leqslant k_l^{-\frac12+\eta} \bigg\}\right)=1.
$$ for any $\eta>0$.

Moreover, if
$$\sum_{l\in\mathbb N} \frac{1}{\sqrt{k_l}}\ \cdot\, \sup_{\|h\|_{\rm Lip}\leqslant 1}\ \frac {\mathbb E_ {{\mathcal  P}_{l}} (S^{n_l}h)^4}{\sigma^4_{{\mathcal P}_{l}}(S^{n_l}h)} <\infty,$$
then one has
$$
\frac 1{k_l(n_l+M_l)} \sum_{i=0}^{k_l(n_l+M_l)-1}\delta_{T^i(p_{l})}
\xRightarrow{l\to\infty}\mu
$$
for $\nu$ a.e. sequence  $\displaystyle{\{p_{l}\}_{l\in\mathbb N}\in \prod_{l\in\mathbb N}{\mathcal  P}_{l}}$
where $\nu$ denotes the product measure of $\nu_{{\mathcal  P}_{l}}$ on $P^{\mathbb N}$.
\end{theorem}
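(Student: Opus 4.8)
The plan is to establish both assertions by moment estimates, drawing the required variance and fourth-moment bounds for the block sums $S^{n_l}_{a_i}h$ from the $\epsilon_l$-independence structure exactly as in the proof of Theorem~\ref{theo:1}. Throughout write $N_l=k_l(n_l+M_l)$, $a_i=(i-1)(n_l+M_l)$, and recall the bijection $\Phi_l\colon F_l=E_l^{k_l}\to\mathcal P_l$ and the independent array $G_{l,i}(x_1,\ldots,x_{k_l})=S^{n_l}h(x_i)$ on $(F_l,\nu_{F_l})$, which satisfies $|S^{n_l}_{a_i}h\circ\Phi_l-G_{l,i}|\le\omega^{n_l}_{a_i}(h,\epsilon_l,\cdot)\le n_l\epsilon_l\|h\|_{\mathrm{Lip}}$ since $h$ is Lipschitz.

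First I treat the concentration statement by Chebyshev's inequality (not through the CLT of Theorem~\ref{theo:1}, whose Lindeberg condition can fail for a fixed function when $n_l$ is large). Decompose $S^{N_l}h=\sum_{i=1}^{k_l}S^{n_l}_{a_i}h+\sum_{i=1}^{k_l}S^{M_l}_{a_i+n_l}h$; the gap part is bounded by $k_lM_l\|h\|_\infty$ in sup-norm, hence contributes $O(k_l^2M_l^2)$ to the variance, while pushing the orbit-piece part through $\Phi_l$ and using $\sigma^2_{F_l}(G_{l,i})\le\|S^{n_l}h\|_\infty^2\le n_l^2\|h\|_\infty^2$ together with independence on $F_l$ and the displayed oscillation bound gives $\sigma^2_{\mathcal P_l}(\sum_iS^{n_l}_{a_i}h)\lesssim k_ln_l^2(1+k_l\epsilon_l^2)$. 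Since $\epsilon_lk_l^{(1+\eta_l^*)/2}\to0$ forces $k_l\epsilon_l^2\to0$ and $k_l^{1+\eta_l^*}M_l^2/n_l^2\to0$ forces $k_l^2M_l^2=o(k_ln_l^2)$, we obtain $\sigma^2_{\mathcal P_l}(S^{N_l}h)\lesssim k_ln_l^2$; as $N_l\asymp k_ln_l$ and $k_l\to\infty$, Chebyshev yields $\nu_{\mathcal P_l}(|S^{N_l}h-\mathbb E_{\mathcal P_l}(S^{N_l}h)|>N_lk_l^{-1/2+\eta})\lesssim k_l^{-2\eta}\to0$ for every $\eta>0$.

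For the weak-convergence statement, fix a countable family of Lipschitz functions $\{f_m\}$ dense in $C(X)$; because each $\mathcal P_l\subset P_{N_l}$, it suffices to prove that for each fixed Lipschitz $f$ one has $\frac1{N_l}S^{N_l}f(p_l)\to\int f\,d\mu$ for $\nu$-a.e.\ $\{p_l\}$, and then to intersect over $m$. Split $\frac1{N_l}S^{N_l}f(p_l)=\frac1{N_l}\big(S^{N_l}f(p_l)-\mathbb E_{\mathcal P_l}(S^{N_l}f)\big)+\frac1{N_l}\mathbb E_{\mathcal P_l}(S^{N_l}f)$. The deterministic term equals $\frac1{N_l}\sum_{i=0}^{N_l-1}\int f\circ T^i\,d\nu_{\mathcal P_l}$; running the spanning argument from the proof of Theorem~\ref{theo:weak-convergence} with $T$ replaced by each iterate $T^i$ (the induced self-map $p\mapsto q_i(p)$ of $\mathcal P_l$ is a bijection by the same use of expansiveness and~\eqref{eq:pphomeo}) gives $|\int f\circ T^i\,d\nu_{\mathcal P_l}-\int f\,d\nu_{\mathcal P_l}|\le 4\epsilon_l\|f\|_{\mathrm{Lip}}$ uniformly in $i$, so the deterministic term differs from $\int f\,d\nu_{\mathcal P_l}$ by at most $4\epsilon_l\|f\|_{\mathrm{Lip}}\to0$ and therefore, by Corollary~\ref{cor:wkcvg} (which is where $n_l>n_l^*$ is used), converges to $\int f\,d\mu$.

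The random term is handled by the analogous fourth-moment estimate: transferring through $\Phi_l$ and using the independence of $\{G_{l,i}\}$ on $F_l$ (for which $\mathbb E_{F_l}[(\sum_iY_i)^4]=\sum_i\mathbb E[Y_i^4]+3\sum_{i\ne j}\mathbb E[Y_i^2]\mathbb E[Y_j^2]$ with $Y_i=G_{l,i}-\mathbb E_{F_l}G_{l,i}$), together with the crude bounds $\mathbb E[Y_i^2]\le n_l^2\|f\|_\infty^2$, $\mathbb E[Y_i^4]\le16n_l^4\|f\|_\infty^4$ and the sup-norm control of the gap sum (by $k_lM_l\|f\|_\infty$) and of the oscillation defect (by $k_ln_l\epsilon_l\|f\|_{\mathrm{Lip}}$), gives $\mathbb E_{\mathcal P_l}\big((S^{N_l}f-\mathbb E_{\mathcal P_l}(S^{N_l}f))^4\big)/N_l^4\lesssim k_l^{-2}+\epsilon_l^4+M_l^4/n_l^4$. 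Since $\sup_{\|h\|_{\mathrm{Lip}}\le1}\mathbb E_{\mathcal P_l}(S^{n_l}h)^4/\sigma^4_{\mathcal P_l}(S^{n_l}h)\ge1$, the moment hypothesis forces $\sum_lk_l^{-2}<\infty$, while $\epsilon_l^2k_l^{1+\eta_l^*}\to0$ and $k_l^{1+\eta_l^*}M_l^2/n_l^2\to0$ make $\sum_l\epsilon_l^4<\infty$ and $\sum_lM_l^4/n_l^4<\infty$; hence $\sum_l\mathbb E_{\mathcal P_l}((S^{N_l}f-\mathbb E_{\mathcal P_l}(S^{N_l}f))^4)/N_l^4<\infty$, and since the coordinates $p_l$ are $\nu$-independent, Borel--Cantelli gives $\frac1{N_l}\big(S^{N_l}f(p_l)-\mathbb E_{\mathcal P_l}(S^{N_l}f)\big)\to0$ $\nu$-a.s. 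Combining the two terms and intersecting over $m$ yields the asserted weak convergence. The step I expect to require the most care is this fourth-moment bound: the transfer of independence from $F_l$ to $\mathcal P_l$ must be pushed to fourth order, so one has to bound the $L^4(\nu_{\mathcal P_l})$-norms of the oscillation defects $\sum_i\omega^{n_l}_{a_i}(f,\epsilon_l,\cdot)$ and of the gap sums $\sum_iS^{M_l}_{a_i+n_l}f$, and then verify that every resulting error term, after division by $N_l^4\asymp k_l^4n_l^4$, is summable in $l$ under the stated hypotheses on $\epsilon_l,M_l,n_l,k_l$ and the moment condition.
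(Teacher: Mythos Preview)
Your argument is correct, and it diverges from the paper's proof in both halves of the theorem.

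For the concentration statement, the paper does not use a single Chebyshev bound. Instead it partitions the indices into $I=\{l:s_l^2\ge k_l^{1-\eta_l^*}n_l^2\}$ and its complement: on $I$ it verifies the oscillation, gap, and Lindeberg hypotheses of Theorem~\ref{theo:1} and reads off the conclusion from the resulting CLT (evaluated at $t\to\infty$), while on the complement it uses a Chebyshev-type estimate with the small-variance bound. Your route is more elementary: the universal variance bound $\sigma^2_{\mathcal P_l}(S^{N_l}h)\lesssim k_ln_l^2$, obtained by pushing through $\Phi_l$ and using independence on $F_l$ exactly as you describe, already gives the result by one application of Chebyshev, and the case split becomes unnecessary. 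What the paper's detour buys is an actual CLT along the subsequence $I$, which is strictly more than the statement demands; your argument trades this extra information for brevity.

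For the almost-sure weak convergence, the paper appeals to the Berry--Esseen theorem on the independent blocks over $F_l$ to obtain that the deviation probability in~\eqref{eq:4.2-5} is $O(k_l^{-1/2})$ up to a moment-ratio factor, and then invokes Borel--Cantelli through the displayed summability hypothesis. You replace Berry--Esseen by the explicit fourth-moment identity for sums of independent centered variables, arriving at the summable bound $k_l^{-2}+\epsilon_l^4+M_l^4/n_l^4$ after transfer through $\Phi_l$; the moment hypothesis enters only via $\sum_lk_l^{-1/2}<\infty$, and the remaining terms are dominated by $k_l^{-2}$ using $\epsilon_l^2k_l^{1+\eta_l^*}\to0$ and $k_l^{1+\eta_l^*}M_l^2/n_l^2\to0$. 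This is again more elementary, at the cost of not extracting a rate in the normal approximation. You also make explicit the convergence of the centering term $\tfrac1{N_l}\mathbb E_{\mathcal P_l}(S^{N_l}f)\to\int f\,d\mu$, which the paper leaves to the reader; your reduction to approximate $T$-invariance of $\nu_{\mathcal P_l}$ via the bijection $p\mapsto q_i(p)$ together with Corollary~\ref{cor:wkcvg} is the natural way to supply this step, and the injectivity of that bijection for general $i$ indeed follows from the argument in the proof of Theorem~\ref{theo:weak-convergence} combined with condition~\eqref{eq:pphomeo}.
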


\begin{proof}
Let  $a_{i}=(i-1)(n_l+M)$ and
$s_{l}^2=\sum_{i=1}^{k_l} \sigma^2_{\mathcal P_{l}}(S_{a_{i}}^{n_l} h)$
$$I:=\{ l\in\mathbb N:   s_l^2 \geqslant {k_l}^{1-\eta_l^*}  n_l^2\}.$$
Assume $I$ has infinitely many elements. We verify conditions in Theorem \ref{theo:1} for $l\in I$.
Since $h$ is Lipschitz, one has for $j=1,2$ that
$$\frac1{s_l^j}\sum_{i=1}^{k_l}
\int \left(\omega_{a_i}^{n_l}(h,4\epsilon_l,p)\right)^j d\nu_{\mathcal P_{l}}\lesssim \frac{k_l(\epsilon_l n_l)^j}{(k_l^{1-\eta_l^*} n_l^2)^{j/2}}=k_l^{1-j/2+\eta_l^*j/2}\epsilon_l^{j}\xrightarrow{l\to\infty} 0.$$
and
$$\frac1{s^2_l}\ \sigma^2_{\mathcal P_{l}}\left(\sum_{i=1}^{k_l} S_{a_{i}+n_l}^{M_l} h\right)\lesssim \frac{k_l^2M_l^2\|h\|_\infty^2}{k_l^{1-\eta_l^*} n_l^2}\xrightarrow{l\to\infty}0$$
by assumptions. Also since
$${|S_{a_i}^{n_l} h(p)-\mathbb E_{\mathcal P_l}(S_{a_i}^{n_l} h)|}/{s_l}\lesssim n_l/s_l\leqslant 1/k_l^{1/2-\eta_l^*/2}\xrightarrow{l\to\infty}0,$$
$$\lim_{l\to\infty}\frac 1{s_l^2} \sum_{i=1}^{k_l}L_{\mathcal P_l}(S_{a_i}^{n_l} h_l, \eta s_l)=0\quad  \forall \eta>0.$$
Hence Theorem \ref{theo:1} implies that for any subsequence $l\in I$ and any $\eta>0$
\begin{align}
&\quad \lim_{l\to\infty}\nu_{\mathcal P_l}\left(\bigg\{\frac{1}{k_l(n_l+M_l)}\big(S^{k_l(n_l+M_l)}h -\mathbb{E}_{\mathcal P_{l}}(S^{k_l(n_l+M_l)}h)\big)
\leqslant k_l^{-1/2+\eta}\bigg\}\right)\label{eq:4.2-5}\\
&=\lim_{l\to\infty} \mathcal N(k_l^{1/2+\eta}(n_l+M_l)/s_l)=1 \notag
\end{align}
since $s_l^2\lesssim k_l n_l^2$, and hence
$k_l^{1/2+\eta}(n_l+M_l)/s_l\gtrsim k_l^{1/2+\eta}(n_l+M_l)/(k_l^{1/2}n_l)\xrightarrow{l\to\infty}\infty.$

On the other hand, for any subsequence $l\not\in I$, where $\eta>(1-\eta^*_l)/2$
\begin{align*}
&\quad \nu_{\mathcal P_l}\left(\bigg\{\big| \frac 1 {k_l(n_l+M_l)} (S^{k_l(n_l+M_l)}h -\mathbb E_{{\mathcal P}_{l}}(S^{k_l(n_l+M_l)}h))\big |\geqslant k_l^{-1/2+\eta}\bigg\}\right)\\
&\lesssim \frac{k_l^2M_l^2+k_ls_l^2}  {k_l^{1+2\eta}(n_l+M_l)^2}\leqslant \frac{k_l^{1-2\eta}M_l^2}{n_l^2}+k_l^{1-\eta_l^*-2\eta}\xrightarrow{l\to\infty} 0.
 \end{align*}
This establishes the first part of the theorem.

For the second part, notice that the Berry-Esseen theorem applied to the $k$ blocks in the proof of Theorem \ref{theo:1} yields that the set of points satisfying  (\ref{eq:4.2-5}) is of order $k_l^{-1/2}$, hence the set of $p$ not satisfying (\ref{eq:4.2-5}) has measure zero by  Borel-Cantelli lemma. This implies the second part of the theorem.
\end{proof}

Since the measure of maximal entropy is often mixing it seems to be suggesting that the central limit theorem can only be derived through measures on periodic orbits if these reflect the mixing properties of the measure of maximal entropy. This seems to be difficult (if not at all impossible in general) using the global specification property. This is supported by the fact that periodic points do not have a natural filtration of $\sigma$-fields. Thus, an application of the method in Section \ref{sec:globalCLT} provides a different approach to this problem. At this point we are not discussing this further; instead,
we will derive a CLT of a somewhat different type.

Assume that $(X,T)$ satisfies local specification property with a unique measure of maximal entropy $\mu$. %Note that uniqueness of the equilibrium state follows for many special examples (\cite{Bow74, Bow75, DenKelUrb90, Man83}).
In a topologically mixing system with local specification property, one can show in parallel to Theorem \ref{theo:weak-convergence} that $\mu$ is the weak limit of $\nu_{P_n}$, the uniform measure on periodic points.
The following proposition is one from  a variety of others which can be proved along the lines.
We denote by
$\mu_Y$  the induced measure of $\mu$ on $Y$ and call an open, non-empty set $B$ an $\mu$-set if  $\mu(\partial B)=0$.

 \begin{proposition}\label{prop:9.1} Let $(X,T)$ be a $\epsilon^*$-positively expansive and topologically mixing system satisfying the local specification property. Let $\mu$ be the unique measure of maximal entropy.
Consider $\{\mathcal U_l\}_{l\in\mathbb{N}}$ a family   of pairwise disjoint open $\mu$-sets  of diameter less than $\delta(\epsilon^*/3)$ and let $\alpha_l:= \bigvee_{i=0}^{k_l-1} T^{-in_l}\mathcal U_l$, where
$k_l, n_l\in\mathbb N$ and $n_l\geqslant N(\epsilon^*/3)$ (see Definition~\ref{def:specloc}).
Assume
\begin{equation}\label{eq:9.2}
\mu\left(\bigcup_{U\in\mathcal U_l} U\right) = 1- o\left(\frac1{k_l}\right),
\end{equation}
\begin{equation}\label{eq:9.2b}
\lim_{l\to\infty}\sup_{A\in \alpha_l}\limsup_{j\in\mathbb N}\frac {\mu(B_{\epsilon^*}^j(A)\cap P_j)}{\mu(A\cap P_j)}= 1.
\end{equation}

\noindent
Consider a family $\{h_l\}_{l\in\mathbb{N}}$ of real valued functions in $L^3(\mu)$ such that
\begin{enumerate}
\item  for $a_i=(i-1)n_l$ and $s_l^2=\max_{A\in \alpha_l} \sigma^2_{\mu_A} (S^{k_l n_l} h_l)\to\infty$
\begin{equation}\label{eq:9.1}
\lim_{l\to\infty}\frac 1{s_l^2} \max_{A\in\alpha_l}\sum_{1\leqslant i\leqslant k_l} \int_A (\omega_{a_i}^{n_l}(h_l, \epsilon^*, p))^2 d\mu_A(p)=0,
\end{equation}

 \item $\exists K\in \mathbb R$ such that $\forall l\in\mathbb N, A \in \alpha_l$ and $1\leqslant i \leqslant k_l$
 \begin{equation}\label{eq:9.3}
  \mathbb E_{\mu_A}\left(|S^{n_l}_{a_i}h_l - \mathbb E_{\mu_A} (S_{a_i}^{n_l}h_l)|^3\right) \leqslant K  {\sigma}^3_{\mu_A}(S_{a_i}^{n_l}h_l),
  \end{equation}
\item  for some measurable function $\sigma_x$
 \begin{equation}\label{eq:9.5}
 \frac1{s_l^2}\sum_{A\in\alpha_l}\mathbbm 1_A(x)\cdot \sigma^2_{\mu_A}(S^{k_l n_l}h_l)
  \xrightarrow{L^1(\mu)}\sigma_x.
\end{equation}
 \end{enumerate}
 Then for any $t\in\mathbb R$
 $$\lim_{l\to\infty}\mu\big(\big\{ S^{k_l n_l}h_l- \sum_{A\in \alpha_l} \mathbbm 1_A \cdot \mathbb E_{\mu_A} (S^{k_l n_l}h_l) \leqslant t  s_l\big\}\big)=\int_X \mathcal N(t/\sigma_x) d\mu(x),$$
 where for $\sigma_x=0$, $\mathcal N(t/\sigma_x)= \mathbbm 1_{\{s>0\}}(t)$
is the distribution function of the Dirac measure in $0$.
    \end{proposition}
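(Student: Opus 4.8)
The strategy is to run the local central limit theorem Theorem~\ref{theo:2} \emph{conditionally on each atom} $A\in\alpha_l$ and then average the resulting conditional limit laws against $\mu$, using~\eqref{eq:9.5} to recover the random variance.

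First I would reduce to a single atom. The union of the atoms of $\alpha_l$ equals $\bigcap_{i=0}^{k_l-1}T^{-in_l}\big(\bigcup_{U\in\mathcal U_l}U\big)$, so the $T$-invariance of $\mu$ together with~\eqref{eq:9.2} gives $\mu\big(X\setminus\bigcup_{A\in\alpha_l}A\big)\leqslant k_l\cdot o(1/k_l)=o(1)$; it therefore suffices to control $\mu_A$ for $A\in\alpha_l$ and sum the contributions $\mu(A)\,\mu_A(\{\,\cdot\,\})$. Fix $A=\bigcap_{i=1}^{k_l}T^{-(i-1)n_l}U_i$ with $U_i\in\mathcal U_l$. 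Since $\operatorname{diam}\mathcal U_l<\delta(\epsilon^*/3)$ and $n_l\geqslant N(\epsilon^*/3)$, Proposition~\ref{prop:locidp} (with parameter $\epsilon^*/3$ and period $m=k_ln_l$) produces a locally $\epsilon^*/3$-independent set $\mathcal P^A_l\subset P_{k_ln_l}$ with respect to $A$ obeying $A\cap P_{k_ln_l}\subset\mathcal P^A_l\subset B_{\epsilon^*}^{k_ln_l}(A)\cap P_{k_ln_l}$. Condition~\eqref{eq:9.2b} forces this sandwich to be asymptotically tight, so together with the weak convergence $\nu_{P_j}\Rightarrow\mu$ (which holds here as noted above) and $\mu(\partial A)=0$ one gets $\nu_{\mathcal P^A_l}\approx\nu_{A\cap P_{k_ln_l}}\Rightarrow\mu_A$; the oscillation bound~\eqref{eq:9.1} then upgrades this to a comparison of the laws of the blocks $S_{a_i}^{n_l}h_l$, $a_i=(i-1)n_l$, valid despite $h_l$ being merely in $L^3(\mu)$. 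All error terms here are uniform in $A$ because of the $\max_{A\in\alpha_l}$ and $\sup_{A\in\alpha_l}$ built into~\eqref{eq:9.1} and~\eqref{eq:9.2b}.

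Next I would apply Theorem~\ref{theo:2} on $(\mathcal P^A_l,\nu_{\mathcal P^A_l})$ to the array $\{S_{a_i}^{n_l}h_l\}_{1\leqslant i\leqslant k_l}$: its oscillation hypothesis is~\eqref{eq:9.1} transported to $\nu_{\mathcal P^A_l}$, while its Lindeberg hypothesis follows from the uniform Lyapunov bound~\eqref{eq:9.3} via Lyapunov's criterion (one also checks, using~\eqref{eq:9.1}, that $\max_{1\leqslant i\leqslant k_l}\sigma^2_{\mathcal P^A_l}(S_{a_i}^{n_l}h_l)=o(\tilde s_l^2)$, where $\tilde s_l^2:=\sum_i\sigma^2_{\mathcal P^A_l}(S_{a_i}^{n_l}h_l)$, so that no single block carries a positive fraction of the total variance). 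Independence of the coordinate functions on the product set, together with~\eqref{eq:9.1}, forces $\sigma^2_{\mu_A}(S^{k_ln_l}h_l)=\tilde s_l^2+o(s_l^2)$ uniformly in $A$, so Theorem~\ref{theo:2} (renormed from $\tilde s_l$ to $s_l$) and~\eqref{eq:9.5} yield, for $\mu$-a.e.\ $x\in\{\sigma_x>0\}$ with $A(x)$ the atom containing $x$,
\begin{equation*}
\mu_{A(x)}\Big(\big\{\,S^{k_ln_l}h_l-\mathbb E_{\mu_{A(x)}}(S^{k_ln_l}h_l)\leqslant t\,s_l\,\big\}\Big)\xrightarrow{l\to\infty}\mathcal N(t/\sigma_x);
\end{equation*}
on $\{\sigma_x=0\}$ the same limit (the Dirac distribution function) follows from Chebyshev's inequality together with~\eqref{eq:9.5}. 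To make the first convergence quantitative and \emph{uniform} over the $l$-dependent family of atoms on which $\sigma^2_{\mu_A}(S^{k_ln_l}h_l)/s_l^2$ stays bounded away from $0$, I would replace the soft appeal to Theorem~\ref{theo:2} by the Berry--Esseen bound for the independent array on the product set, whose error is governed by the uniform constant $K$ of~\eqref{eq:9.3}, by $k_l^{-1/2}$, and by the uniform oscillation averages in~\eqref{eq:9.1}--\eqref{eq:9.2b}.

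Finally I would integrate. Writing the probability in the statement as $\int_X\mathbb E_\mu[\mathbbm 1_{\{\dots\}}\mid\alpha_l]\,d\mu$ and noting $\mathbb E_\mu[\mathbbm 1_{\{\dots\}}\mid\alpha_l](x)=\mu_{A(x)}(\{\dots\leqslant t s_l\})$, I would combine the uniform conditional convergence above with the $L^1(\mu)$-convergence in~\eqref{eq:9.5} (hence convergence in $\mu$-measure of the normalized conditional variance); the continuous mapping theorem and bounded convergence then give $\int_X\mathcal N(t/\sigma_x)\,d\mu(x)$ in the limit, using that $v\mapsto\mathcal N(t/\sqrt v)$ is bounded, right-continuous on $[0,\infty)$ and, for $t\neq0$, continuous at $v=0$ with the stated convention. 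I expect the main obstacle to be exactly this uniformity: Theorem~\ref{theo:2} is phrased for one fixed sequence of sets, whereas here the conditional CLTs must hold along the $l$-dependent family $\{A\}_{A\in\alpha_l}$ with an error that is small in $L^1(\mu)$, so one must extract a genuine Berry--Esseen rate from the local $\epsilon$-independence structure and check that the substitution $\nu_{\mathcal P^A_l}\leadsto\mu_A$ --- licensed only by~\eqref{eq:9.2b} and the oscillation control, with $h_l$ merely $L^3$ --- does not destroy it; the variance bookkeeping relating $\tilde s_l$, $s_l$ and $\sigma_{\mu_A}$ is the other delicate point.
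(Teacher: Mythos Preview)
Your strategy matches the paper's: reduce to atoms via~\eqref{eq:9.2}, run a quantitative CLT on each atom through the local $\epsilon$-independence structure, use~\eqref{eq:9.3} for a Berry--Esseen rate uniform in $A$, and then integrate against~\eqref{eq:9.5}. The paper in fact skips the soft appeal to Theorem~\ref{theo:2} and goes straight to Berry--Esseen on the product set $F_l$ (Petrov, Chapter~V, \S2, Theorem~3), which is exactly the upgrade you anticipate needing.

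There is one substantive technical difference that resolves the obstacle you flag at the end. You take the period of the locally independent set to be $m=k_ln_l$ and then try to argue $\nu_{A\cap P_{k_ln_l}}\approx\mu_A$; but since $A\in\alpha_l$ varies with $l$, the portmanteau argument from $\nu_{P_j}\Rightarrow\mu$ and $\mu(\partial A)=0$ does not apply to this coupled limit. The paper instead \emph{decouples} the periodic-point period from $l$: for each fixed $l$ and each $A\in\alpha_l$ it first replaces $h_l$ by a $\mu$-a.e.\ continuous approximant and then replaces $\mu_A$ by $\nu_{P_j\cap A}$ for $j$ a sufficiently large multiple of $n_l$, chosen so that all the moments in~\eqref{eq:9.1} and~\eqref{eq:9.3} are reproduced to any desired accuracy. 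Proposition~\ref{prop:locidp} is then applied with this large $j$ (not $k_ln_l$), yielding $A\cap P_j\subset\mathcal P_l\subset B_{\epsilon^*}^{\,j}(A)\cap P_j$, and~\eqref{eq:9.2b} is used in the form $|F_l|/|P_j\cap A|=1+o(1)$ to pass from $\nu_{P_j\cap A}$ to the product measure $\nu_{F_l}$. With this free parameter $j$ the substitution $\mu_A\leadsto\nu_{F_l}$ costs an error that can be made arbitrarily small for each $l$, and the remaining error is the genuine $O(1/\sqrt{k_l})$ from Berry--Esseen, uniform over $A$. This is precisely the missing ingredient in your sketch.
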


\begin{proof}
Notice  that since by (\ref{eq:9.2})
$$ \mu \left(X\setminus\bigcup_{A\in \alpha_l} A\right)\leqslant \sum_{i=1}^{k_l} \left(1-\mu(T^{-a_i}(\bigcup_{U\in\mathcal U_l}U))\right) \to 0,$$
we can consider the distribution restricted to $\cup_{A\in\alpha_l} A$.
We  first   estimate
\begin{equation}\label{eq:9.6}
 \mu_A\big(\big\{ S^{k_l n_l}h_l(x)- \mathbb E_{\mu_A} (S^{k_l n_l}h_l)\leqslant t s_{l,A}\big\}\big) - \mathcal N(t)
\end{equation}
for  $A\in\alpha_l$,  where
 $$s_{l,A}^2= \sum_{i=1}^{k_l} \sigma^2_{\mu_A}(S^{n_l}_{a_i}h_l).$$
Approximating $h_l$ by a $\mu$-a.e. continuous function and then $\mu$ by the uniform distribution on all periodic points  in ${P}_j$ for sufficiently large $j$, it is sufficient to estimate the expression (\ref{eq:9.6}) with $\mu_A$ replaced by $\nu_{P_{j}\cap A}$.  We also may assume that $j$ is so large that we can approximate
\begin{eqnarray*}
&& \sigma^2_{\mu_A} (S_{a_i}^{n_l}h_l); \qquad \mathbb E_{\mu_A}\left(|S^{n_l}_{a_i}h_{l}- \mathbb E_{\mu_A} (S_{a_i}^{n_l}h_l)|^3\right), \ 1\leqslant i\leqslant k_l
\end{eqnarray*}
by the corresponding expressions under $\nu_{{P}_j\cap A}$ in such a way that it does not affect the limiting distribution and that the assumptions still hold for $\nu_{P_j\cap A}$.

The problem now reduces to proving the statement for the dynamical array $$\{ S^{n_l}_{a_i} h_l-  \mathbb E_{P_j\cap A}(S^{n_l}_{a_i} h_{l})\}_{1\leqslant i\leqslant k_l}$$ on the probability space $(X, \nu_{{P}_j\cap A})$.

By Proposition \ref{prop:locidp} there exists a locally $\epsilon^*/3$-independent set $\mathcal P_l\subset P_j$ with  respect to $A$ such that $ A\cap P_j \subset\mathcal P_l\subset B_{\epsilon^*}^j(A)\cap P_j$ and that $\mathcal P_l$ is bijective with a product set $F_l\subset X^{k_l}$ by $\Phi: F_l\to \mathcal P_l$, which approximates $\mathcal P_l$ as in \eqref{eq:6.2}.
By (\ref{eq:9.2b}) and approximating $\mu$ by $\nu_{P_j}$ we may assume that
$$ 1+o(1)\geqslant \frac{\nu_{P_j}(B_{\epsilon^*}^j(A))}{\nu_{P_j}(A)}\geqslant \frac{|F_l|}{|P_j\cap A|}\geqslant 1.$$
Together with  condition \eqref{eq:9.1}, it follows that
we may exchange the measure $\nu_{P_j\cap A}$  for the product measure $\nu_{F_l}$.
With $\nu_{F_l}$ we can apply the central limit theorem  with rate estimate (see \cite{Pet75}, Chapter V, Section 2, Theorem 3). Since the quotient of the third centered moment of $S^{n_l}_{a_i} h_l\circ \Phi$  divided by its variance raised to the $3/2$ moment stays bounded by (\ref{eq:9.3}) it follows that
$$\left|\nu_{F_l} (\{S^{k_l n_l} h_l\circ \Phi - \mathbb E_{F_l}(S^{k_l n_l} h_{l}\circ \Phi)\leqslant t s_{{l},A}\})- \mathcal N(t)\right|
= O(1/\sqrt{k_l}),$$
where the estimate is uniform over $\mathcal P_l$ and $A\in\alpha_l$.
The same estimate holds therefore for $\nu_{P_j\cap A}$, hence
 \begin{eqnarray*}
 && \nu_{P_j}\left(\bigg\{x\in \bigcup_{A\in \alpha_l} A: S^{k_ln_l} h_l(x)- \sum_{A\in \alpha_l} \mathbbm 1_A(x) \cdot \mathbb E_{\mu_A} (S^{k_ln_l}h_l)\leqslant t s_{l}\bigg\}\right) \\
 &=& \sum_{A\in\alpha_l} \nu_{P_j\cap A}(\{S^{k_ln_l} h_l  - \mathbb E_{\mu_A} (S^{k_ln_l}h_l)\leqslant t s_{l}\}) \nu_{P_j}(A)+o(1)\\
 &=& \sum_{A\in\alpha_l} \mathcal N(ts_{l}/s_{{l},A})  \nu_{P_j}(A)+o(1)\\
&=&  \int \mathcal N(t/\sigma_x) d\nu_{P_j}(x)+o(1),
\end{eqnarray*}
since $ s_{{l},A}/s_{l}$ converges to $\sigma_x$ in $L^1(\mu)$ by assumption (\ref{eq:9.5}). By the choice of $P_j$, the conclusion holds as well for $\mu$.
\end{proof}

The method of proof of Proposition \ref{prop:9.1} has many applications. Here we use  the proposition to obtain convergence to a mixture of normal distributions for  a special class of square integrable functions, which we call wildly oscillating.
We call a finite partition $\beta$ of $X$ a $\nu$-partition if for each $B\in \beta$,  $B\subset\overline{\mbox{int} B}$ and $B$ is a $\nu$-set.

\begin{definition}\label{def:5.2} Let $(X, T, \nu)$ be a probability preserving dynamical system.  Suppose
for every $l\in\mathbb N$, $\beta_l$ is a $\nu$-partition of diameter less than $\epsilon_l$ with $\epsilon_l\to0$ and $W_l\geqslant n_l$ are natural numbers. A square integrable  function $h:X\to \mathbb R$ is wildly oscillating with data $(\beta_l, \epsilon_l, W_l, n_l)_{l\in \mathbb N}$, if
with $k_l:=[W_l/n_l]$, $\alpha_l:=\bigvee_{i=0}^{k_l-1}T^{-i n_l} \beta_{l}$ and $s_l^2:=\max_{A\in\alpha_l} \sigma^2_{\nu_A}(S^{k_l n_l}h)$,
\begin{enumerate}
 \item
 $$s_l\to\infty \quad \text{ and }\quad \frac{1}{s_l^2}\max_{1\leqslant r< n_l}\max_{A\in\alpha_l} \sigma^2_{\nu_A}(S^r h)\to 0,$$
\item
$$\lim_{l\to\infty}\frac 1{s_l^2} \max_{A\in\alpha_l}\sum_{i=1}^{k_l} \int_A \left(\omega_{(i-1)n_l}^{n_l}(h, \epsilon_l, p)\right)^2 d\nu_A(p)=0.$$
\end{enumerate}
\end{definition}

  \begin{theorem}\label{theo:8.2} Let $(X,T)$ be a positively expansive and topologically mixing dynamical system with the local specification property.
Let $\mu$ be the unique measure of maximal entropy. Let $h :X\to \mathbb R$ be a wildly oscillating function in $L^3(\mu)$ with data $(\beta_l,\epsilon_l, W_l, n_l=N(\epsilon_l))_{l\in \mathbb N}$. Assume that with notations above
 \begin{enumerate}
\item $\exists K>0$
 such that $\forall  l\in \mathbb N, A\in \alpha_l, 1\leqslant i\leqslant k_l$
 \begin{equation}\label{eq:9.30}
\mathbb E_{\mu_A}(| S_{(i-1)n_l}^{n_l} h-\mathbb E_{\mu_A}(S^{n_l}_{(i-1)n_l}h)|^3)\leqslant  K (\sigma^2_{\mu_A}(S^{n_l}_{(i-1)n_l} h))^{3/2},
 \end{equation}
\item
\begin{equation}\label{eq:9.32}
\frac1{s_l^2}\sum_{A\in\alpha_l}\mathbbm 1_A\cdot \sigma^2_{\mu_A}(S^{W_l}h) \text{ converges in } L^1.
  \end{equation}
\end{enumerate}
 Then with respect to $\mu$
 \begin{equation}\label{eq:9.36}
  \frac 1{s_l} \left(S^{W_l}h-  W_l \int hd\mu\right)
  \end{equation}
 converges weakly to a mixture of centered  normal distributions as $l\to \infty$.
    \end{theorem}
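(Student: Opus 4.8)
The plan is to prove Theorem~\ref{theo:8.2} by running the argument behind Proposition~\ref{prop:9.1}, now with the locally $\epsilon_l$-independent sets furnished by Proposition~\ref{prop:locidp} in place of the fixed-scale ones, so that every hypothesis of that proposition is met by the data $(\beta_l,\epsilon_l,W_l,n_l=N(\epsilon_l))$ of the wildly oscillating function $h$. Concretely, set $\mathcal U_l:=\{\operatorname{int}B:B\in\beta_l\}$, a family of pairwise disjoint open $\mu$-sets; since $\epsilon_l\to0$ one has $\operatorname{diam}\beta_l<\epsilon_l\leqslant\delta(\epsilon^*/3)$ and, taking the function $N$ in Definition~\ref{def:specloc} non-increasing without loss of generality, $n_l=N(\epsilon_l)\geqslant N(\epsilon^*/3)$ for all large $l$; then $\alpha_l=\bigvee_{i=0}^{k_l-1}T^{-in_l}\beta_l$ agrees $\mu$-almost everywhere with $\bigvee_{i=0}^{k_l-1}T^{-in_l}\mathcal U_l$. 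With this dictionary, $\mu\big(\bigcup_{U\in\mathcal U_l}U\big)=1$ gives condition~\eqref{eq:9.2} at once; the moment bound~\eqref{eq:9.30} is exactly~\eqref{eq:9.3}; clause~(2) of Definition~\ref{def:5.2} is~\eqref{eq:9.1}, needed only at the scale $\epsilon_l$ matching the $\epsilon_l$-independent sets; and~\eqref{eq:9.32}, once one knows that $S^{W_l}h$ may be exchanged for $S^{k_l n_l}h$ (see below), is~\eqref{eq:9.5}. The only hypothesis requiring genuine work is~\eqref{eq:9.2b}.

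To verify~\eqref{eq:9.2b} I would use that an $\epsilon^*$-expansive, topologically mixing system with the local specification property and a unique measure of maximal entropy satisfies $\nu_{P_j}\xRightarrow{j\to\infty}\mu$ (proved, as remarked before Proposition~\ref{prop:9.1}, in parallel to Theorem~\ref{theo:weak-convergence}), so the ratio in~\eqref{eq:9.2b} is the $j\to\infty$ limit of $|P_j\cap B^j_{\epsilon^*}(A)|/|P_j\cap A|\geqslant1$. A periodic point in $B^j_{\epsilon^*}(A)$ is $\epsilon^*$-shadowed along its whole period by the orbit of the cylinder $A\in\alpha_l$, and $\epsilon^*$-expansiveness pins such points down, up to a multiplicity bounded in terms of $\epsilon^*$ alone, to the periodic points lying in $A$ --- the same mechanism as the uniformly bounded fibres of the map $\kappa$ in the proof of Theorem~\ref{theo:weak-convergence}; the approach of the ratio to $1$ as $l\to\infty$ comes from $\operatorname{diam}\beta_l=\epsilon_l\to0$, which makes the $\epsilon^*$-thickening of the deep cylinder $A$ (of depth $k_l n_l=W_l\to\infty$) $\mu$-negligible in the limit.

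Granting the hypotheses, the argument of Proposition~\ref{prop:9.1} yields, for every $t\in\mathbb R$,
$$\lim_{l\to\infty}\mu\Big(\Big\{S^{k_l n_l}h-\sum_{A\in\alpha_l}\mathbbm 1_A\,\mathbb E_{\mu_A}\big(S^{k_l n_l}h\big)\leqslant t\,s_l\Big\}\Big)=\int_X\mathcal N(t/\sigma_x)\,d\mu(x),$$
the distribution function of a mixture of centered normal laws. It then remains to replace the random centering $\sum_{A}\mathbbm 1_A\,\mathbb E_{\mu_A}(S^{k_l n_l}h)$ by $W_l\int h\,d\mu$ and the length $k_l n_l$ by $W_l$. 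With $r_l:=W_l-k_l n_l\in\{0,\dots,n_l-1\}$, the leftover $S^{W_l}h-S^{k_l n_l}h=S^{r_l}_{k_l n_l}h$ is a Birkhoff sum of length $<n_l$, so $T$-invariance gives $\sigma^2_\mu\big(S^{r_l}_{k_l n_l}h\big)=\sigma^2_\mu\big(S^{r_l}h\big)$, which is $o(s_l^2)$ by clause~(1) of Definition~\ref{def:5.2} (within-class part) together with an analogous between-class estimate, while $|W_l-k_l n_l|\,\big|\int h\,d\mu\big|\leqslant n_l\big|\int h\,d\mu\big|=o(s_l)$ by the same clause. Setting $Z_l:=\sum_{A\in\alpha_l}\mathbbm 1_A\,\mathbb E_{\mu_A}(S^{k_l n_l}h)-k_l n_l\int h\,d\mu$, one has $\mathbb E_\mu Z_l=0$, so $\sigma^2_\mu(Z_l)=o(s_l^2)$ forces $Z_l/s_l\to0$ in $\mu$-probability, and Slutsky's theorem then gives weak convergence of $s_l^{-1}\big(S^{W_l}h-W_l\int h\,d\mu\big)$ to the same mixture of centered normals.

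The estimate $\sigma^2_\mu(Z_l)=o(s_l^2)$ --- equivalently, negligibility of the \emph{between-class} dispersion of the conditional block sums $\mathbb E_{\mu_A}(S^{k_l n_l}h)$ relative to the total \emph{within-class} variance $s_l^2=\max_{A\in\alpha_l}\sigma^2_{\mu_A}(S^{k_l n_l}h)$ --- is, I expect, the main obstacle, and it is where the mixing content of the local specification property, the uniqueness of $\mu$, and the oscillation condition~\eqref{eq:9.1} must all be combined. The heuristic is that for an interior block $[(i-1)n_l,in_l)$ of length $n_l=N(\epsilon_l)\to\infty$, conditioning on $\alpha_l$ moves $\mathbb E_{\mu_A}(S^{n_l}_{(i-1)n_l}h)$ away from $n_l\int h\,d\mu$ only through boundary effects at the two ends of the block --- which the oscillation hypothesis quantifies once one passes, as in the proof of Proposition~\ref{prop:9.1}, through $\nu_{P_j\cap A}$ and the product set $F_l$ on which the block means factorize --- so the delicate point is to sum these $k_l$ boundary contributions and show the total is $o(s_l)$, simultaneously ruling out any residual contribution of the block-index process to the limiting fluctuation. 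The same type of between-class control settles the leftover $S^{r_l}_{k_l n_l}h$ invoked above, which is why Definition~\ref{def:5.2}(1) and condition~\eqref{eq:9.32} are imposed in the precise forms given.
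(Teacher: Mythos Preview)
Your overall approach --- verify the hypotheses of Proposition~\ref{prop:9.1} from the data of the wildly oscillating function and then invoke that proposition --- is exactly the paper's approach. In fact, the paper's entire proof consists of two sentences: observe that $\mu$ is approximable by periodic point measures (via Theorem~\ref{theo:weak-convergence}) and apply Proposition~\ref{prop:9.1}. Your explicit dictionary matching~\eqref{eq:9.2}, \eqref{eq:9.2b}, \eqref{eq:9.1}, \eqref{eq:9.3}, \eqref{eq:9.5} to the wildly-oscillating data is more detailed than anything the paper writes down.

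You have, however, put your finger on a genuine issue that the paper's proof does not address. Proposition~\ref{prop:9.1} concludes with the \emph{random} centering $\sum_{A\in\alpha_l}\mathbbm 1_A\,\mathbb E_{\mu_A}(S^{k_l n_l}h)$, whereas the statement of Theorem~\ref{theo:8.2} uses the \emph{constant} centering $W_l\int h\,d\mu$. Bridging the two requires controlling the between-class term $Z_l/s_l$, which you correctly flag as the main obstacle and treat only heuristically. The paper simply does not mention this step. The Remark immediately following the theorem --- especially parts (1) and (2), which treat the behaviour of $\frac{1}{s_l}\big(\mathbb E_\mu(S^{W_l}h\mid\alpha_l)-W_l\int h\,d\mu\big)$ as a separate and ``difficult'' problem --- strongly suggests that the authors themselves do not claim $Z_l/s_l\to 0$. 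So either the intended conclusion of Theorem~\ref{theo:8.2} is really with the random centering (making the displayed formula~\eqref{eq:9.36} slightly loose), or the proof as written is incomplete in exactly the place you identify. Your heuristic for $Z_l$ is reasonable but is not a proof; in light of Remark~(2), converting it into one may be genuinely hard, and is in any case beyond what the paper itself supplies.
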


    \begin{proof} First note that by the assumptions Theorem \ref{theo:weak-convergence} applies so that the measure $\mu$ of maximal entropy is approximable by periodic point measures. The proof now follows applying Proposition \ref{prop:9.1}.
    \end{proof}

 \begin{remark} (1) Condition (\ref{eq:9.32}) of the above theorem may be replaced by the assumption that
 \begin{equation}\label{eq:9.40}
  \frac 1{s_l}  \left(\mathbb E_\mu(S^{W_l} h|\alpha_l)- W_l \int h d\mu\right)
  \end{equation}
 converges to some normal distribution. Moreover, if
 $$\frac{1}{s_l^2}{\sigma^2_\mu(S^{W_l} h|\alpha_l)}\to 1,$$
 then \eqref{eq:9.36} converges to the standard normal distribution.

(2) It seems to be a difficult problem to show asymptotic normality  for the functions in (\ref{eq:9.40}). One needs to prove a CLT under Lindeberg conditions for  locally constant functions. If the dynamics is Gibbs-Markov (\cite{AarDen01}) the CLT in \cite{DenSenZha17} is not applicable, since the Lipschitz norm of the conditional expectation $\mathbb E_\mu(\cdot|\alpha_l)$ generally  is of exponential growth as $l\to \infty$. In this case it seems to be more promising to use concepts of probabilistic mixing.

(3) The conditions of Theorem \ref{theo:8.2} are natural for functions for which one wants to study local fluctuation of ergodic sums. It seems to be quite possible to weaken condition (\ref{eq:9.30}) since a uniform speed of convergence in the Berry-Esseen type theorem  might be more than sufficient to get the result.

   (4) Nontrivial examples of functions which satisfy the assumptions of the theorem
may be constructed as in \cite{BurDen87}.

 \end{remark}

%\appendix
\section{The decomposition theorem for fluctuations}\label{sec:decomposition}

Here we are considering a positively expansive and topologically mixing dynamical system $(X,T)$ with the local specification property. As a consequence, $(X,T)$ also  fulfills the global specification property, and hence the fluctuation of an ergodic sum has at least two competing sources of randomness demonstrated by Theorems \ref{theo:1} and \ref{theo:2}.  Both effects may be present for a given ergodic sum. The global specification determines a global CLT while the local one determines many local CLT's determined by sequences of open sets.
The effect with the fastest growing variances dominates the CLT. If variances have asymptotic equivalent growth rates the resulting limit distribution will be a mixture of Gaussian distributions. In Section \ref{sec:mme} we present such an application.
As a result one needs to study the behavior of the variance of ergodic sums and to decompose it according to  different sources of fluctuation. This is the content of this section.

Let $\epsilon>0$, $\mathcal U$ be a finite collection of open sets and $k,n \in \mathbb N$. Let $\alpha=\bigvee_{i=0}^{k-1}T^{-in}\mathcal U$. Recall that any locally $\epsilon$-independent set $\mathcal P\subset P_{kn}$ is associated to a product set $F_{\mathcal P}\subset X^k$ by a bijection $\Phi$.

\begin{definition}\label{def:var} The variation by periodic orbits over $\mathcal U$  of a  function $h:X\to \mathbb R$ is defined by
$$  \mbox{\rm Var}_{\text{\rm per}}(h)= \min\left\{ \sum_{\mathbf x\in F_{\mathcal P}}\sum_{i=1}^k \left(h(T^{(i-1)n}\Phi(\mathbf x))- h(x_i)\right)^2\right\},$$
where the minimum is taken over all possible choices of $F_{\mathcal P}$  as given above.
\end{definition}
We denote by $\Pi(\mathcal U)$ the collection of all $F_{\mathcal P}=\prod_{i=1}^k E_i$ where the minimum in Definition \ref{def:var} is attained.
\begin{definition}\label{def:local}
The local variation over $\mathcal U$ of a function $h:X\to \mathbb R$ is defined as
$$ \mbox{\rm Var}_{\text{\rm loc}}(h)= \min\left\{\sum_{i=1}^k  \sum_{x_i\in E_i}\left( h(x_i)- \mathbb E_{E_i}(h)\right)^2: F_{\mathcal P}\in \Pi(\mathcal U)\right\}.$$
\end{definition}

We say that $F_{\mathcal P}\in\Pi(\mathcal U)$ is CLT-admissible (for a function $h$) if it minimizes the expression in Definition \ref{def:local} (and \ref{def:var} as well).

\begin{definition}\label{def:Holder}
The H\"older variation over $\mathcal U$ and $F_{\mathcal P}\in \Pi(\mathcal U)$ of a function $h:X\to \mathbb R$ is defined by
$$ \mbox{\rm Var}_{\text{\rm H\"ol}}= |F_{\mathcal P}|\sum_{i=1}^k \left(\mathbb E_{E_i}(h)- \mathbb E_{P_{kn}}(h)\right)^2.$$
\end{definition}

\begin{definition}\label{def:global} The total variation over $\mathcal U$ of a function $h:X\to\mathbb R$ is defined as
$$ \mbox{\rm Var}_{\text{\rm tot}} = \sum_{p\in \mathcal P} \sum_{i=1}^k \left(h(T^{(i-1)n}p) - \mathbb E_{P_{kn}}(h)\right)^2.$$
\end{definition}

\begin{theorem}\label{theo:7.5} Let $h:X\to \mathbb R$ be a function. Then for any CLT-admissible $F_{\mathcal P}\in\Pi(\mathcal U)$  we have
$$ \mbox{\rm Var}_{\text{\rm tot}} = \mbox{\rm Var}_{\text{\rm per}} +\mbox{\rm Var}_{\text{\rm loc}}+ \mbox{\rm Var}_{\text{\rm H\"ol}} + 2 \mbox{\rm Cov}_{\text{\rm per},\text{\rm loc}+ \text{\rm H\"ol}},$$
where
$$\mbox{\rm Cov}_{\text{\rm per},\text{\rm loc}+ \text{\rm H\"ol}}= \sum_{p\in \mathcal P} \sum_{i=1}^k \left(h(T^{(i-1)n}(p)) - h(x_i(p))\right)\left(h(x_i(p))- \mathbb E_{P_{kn}}(h)\right).$$
Moreover,
$$ \mbox{\rm Cov}_{\text{\rm per},\text{\rm loc}+ \text{\rm H\"ol}} \leqslant \sqrt{ \mbox{\rm Var}_{\text{\rm per}}(\mbox{\rm Var}_{\text{\rm loc}}+\mbox{\rm Var}_{\text{\rm H\"ol}})}.$$
\end{theorem}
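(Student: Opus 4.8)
The plan is to expand $\mbox{\rm Var}_{\text{\rm tot}}$ by inserting the intermediate value $h(x_i(p))$, where $x_i(p)$ denotes the $i$-th coordinate of $\Phi^{-1}(p)$ for the chosen CLT-admissible $F_{\mathcal P}\in\Pi(\mathcal U)$. Writing $h(T^{(i-1)n}p)-\mathbb E_{P_{kn}}(h) = \big(h(T^{(i-1)n}p)-h(x_i(p))\big) + \big(h(x_i(p))-\mathbb E_{P_{kn}}(h)\big)$ and squaring, one gets
$$\mbox{\rm Var}_{\text{\rm tot}} = \sum_{p\in\mathcal P}\sum_{i=1}^k\big(h(T^{(i-1)n}p)-h(x_i(p))\big)^2 + \sum_{p\in\mathcal P}\sum_{i=1}^k\big(h(x_i(p))-\mathbb E_{P_{kn}}(h)\big)^2 + 2\mbox{\rm Cov}_{\text{\rm per},\text{\rm loc}+\text{\rm H\"ol}}.$$
The first term is exactly $\mbox{\rm Var}_{\text{\rm per}}$ for a CLT-admissible choice (by definition of $\Pi(\mathcal U)$ and admissibility, the minimum in Definition~\ref{def:var} is attained). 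The cross term is $\mbox{\rm Cov}_{\text{\rm per},\text{\rm loc}+\text{\rm H\"ol}}$ verbatim. So it remains to show the second term equals $\mbox{\rm Var}_{\text{\rm loc}}+\mbox{\rm Var}_{\text{\rm H\"ol}}$.

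For that, I would use the product structure of $\mathcal P$: since $\Phi:F_{\mathcal P}=\prod_{i=1}^k E_i\to\mathcal P$ is a bijection and $x_i(p)$ ranges over $E_i$ with each value attained exactly $|F_{\mathcal P}|/|E_i|$ times (because the other coordinates are free), we have
$$\sum_{p\in\mathcal P}\sum_{i=1}^k\big(h(x_i(p))-\mathbb E_{P_{kn}}(h)\big)^2 = \sum_{i=1}^k \frac{|F_{\mathcal P}|}{|E_i|}\sum_{x_i\in E_i}\big(h(x_i)-\mathbb E_{P_{kn}}(h)\big)^2.$$
Here I should note the convention (implicit in the definitions) that all the $|E_i|$ share a common value, say $|F_{\mathcal P}|^{1/k}$ — or at worst absorb the factor $|F_{\mathcal P}|/|E_i|$ consistently — so that Definitions~\ref{def:local} and~\ref{def:Holder} line up dimensionally with this sum; granting that, apply the standard bias–variance split $h(x_i)-\mathbb E_{P_{kn}}(h) = \big(h(x_i)-\mathbb E_{E_i}(h)\big)+\big(\mathbb E_{E_i}(h)-\mathbb E_{P_{kn}}(h)\big)$ on each block. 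The cross terms vanish because $\sum_{x_i\in E_i}\big(h(x_i)-\mathbb E_{E_i}(h)\big)=0$, leaving $\sum_i\sum_{x_i\in E_i}\big(h(x_i)-\mathbb E_{E_i}(h)\big)^2$ plus $\sum_i |E_i|\big(\mathbb E_{E_i}(h)-\mathbb E_{P_{kn}}(h)\big)^2$, which (after multiplying by the common weight) are $\mbox{\rm Var}_{\text{\rm loc}}$ and $\mbox{\rm Var}_{\text{\rm H\"ol}}$ respectively, using that $F_{\mathcal P}$ is CLT-admissible so both minima are attained at this same $F_{\mathcal P}$.

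Finally, the covariance bound is just Cauchy--Schwarz on the sum over $(p,i)\in\mathcal P\times\{1,\dots,k\}$: with $u_{p,i}:=h(T^{(i-1)n}p)-h(x_i(p))$ and $v_{p,i}:=h(x_i(p))-\mathbb E_{P_{kn}}(h)$,
$$\Big|\sum_{p,i}u_{p,i}v_{p,i}\Big| \leqslant \Big(\sum_{p,i}u_{p,i}^2\Big)^{1/2}\Big(\sum_{p,i}v_{p,i}^2\Big)^{1/2} = \sqrt{\mbox{\rm Var}_{\text{\rm per}}\,(\mbox{\rm Var}_{\text{\rm loc}}+\mbox{\rm Var}_{\text{\rm H\"ol}})},$$
where the last equality uses the identity just established for $\sum_{p,i}v_{p,i}^2$. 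The main obstacle I anticipate is purely bookkeeping: pinning down the normalization/cardinality conventions so that $\mbox{\rm Var}_{\text{\rm loc}}$, $\mbox{\rm Var}_{\text{\rm H\"ol}}$ and $\mbox{\rm Var}_{\text{\rm per}}$ (which carry different-looking prefactors $|F_{\mathcal P}|$ versus none) genuinely add up, and making sure "CLT-admissible" is invoked correctly so that a single $F_{\mathcal P}$ simultaneously realizes all three minima — the algebra of the orthogonal decomposition itself is routine.
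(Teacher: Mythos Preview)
Your proposal is correct and follows the same approach as the paper: the paper inserts both intermediate values $h(x_i(p))$ and $\mathbb E_{E_i}(h)$ at once and expands the resulting trinomial square directly, which is your two-step decomposition collapsed into one, with the vanishing of $\sum_{p,i}\big(h(x_i(p))-\mathbb E_{E_i}(h)\big)\big(\mathbb E_{E_i}(h)-\mathbb E_{P_{kn}}(h)\big)$ the only cross term singled out (the Cauchy--Schwarz bound is left implicit). Your bookkeeping worry about the prefactor $|F_{\mathcal P}|/|E_i|$ in matching $\sum_{p\in\mathcal P}\sum_i(h(x_i(p))-\mathbb E_{E_i}(h))^2$ to $\mbox{\rm Var}_{\text{\rm loc}}$ as defined is legitimate and is not addressed in the paper's proof either.
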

\begin{proof}
\begin{align*}
\mbox{\rm Var}_{\text{\rm tot}} &= \sum_{p\in \mathcal P}\sum_{i=1}^k \left( h(T^{(i-1)n}(p)) -
h(x_i(p)) +h(x_i(p))- \mathbb E_{E_i}(h)\right.\\
&\left. \quad + \mathbb E_{E_i}(h) - \mathbb E_{P_{kn}}(h)\right)^2\\
&= \mbox{\rm Var}_{\text{\rm per}}+\mbox{\rm Var}_{\text{\rm loc}}+\mbox{\rm Var}_{\text{\rm H\"ol}}\\
&\quad +2\sum_{p\in \mathcal P} \sum_{i=1}^k \left(h(T^{(i-1)n}(p)) - h(x_i(p))\right)\left(h(x_i(p))- \mathbb E_{P_{kn}}(h)\right)\\
&\quad + 2 \sum_{p\in \mathcal P} \sum_{i=1}^k \left(h(x_i(p))- \mathbb E_{E_i}(h)\right)\left(\mathbb E_{E_i}(h)- \mathbb E_{P_{kn}}(h)\right).
\end{align*}
Since the fifth summand vanishes the theorem follows.
\end{proof}

\begin{remark} A similar decomposition of the total variation can be obtained for systems with the global specification property.
\end{remark}
%The title of your section 1

%For acknowledgements section, please don't number the section, please begin it with \section*{Acknowledgements}
\section*{Acknowledgments}  The research was supported  by {\it n\'umero 158/2012 de Pe\-squisador Visitante Especial de CAPES}. M.D and X.Z also thank the Center of Mathematical Sciences of Huazhong University of Science and Technology for the support when the paper was finalized. S.S was supported by the CNPq. X.Z was supported by PNPD/CAPES and then by Fapesp grant \#2018/15088-4.

% You may incorporate your references as follows in your main tex file.
% Using BibTex is not recommended but can be handled.


\begin{thebibliography}{99}
%\setlength{\itemsep}{0pt}
\bibitem{AarDen01} J. Aaronson, M. Denker.
\newblock Local limit theorems for partial sums of stationary sequences generated by Gibbs-Markov maps.  
\newblock {\em Stochastics and Dynamics}, {\bf 1}, 2001, 193--237.

\bibitem{AarDenUrb93} J. Aaronson, M. Denker, M. Urba\'nski. 
\newblock Ergodic theory for Markov fibred systems and parabolic rational maps.  
\newblock {\em Trans. Amer. Math. Soc.}, {\bf 337}, 1993, 495--548.

\bibitem{Bow71} R. Bowen.
\newblock Periodic points and measures for Axiom A diffeomorphisms. 
\newblock {\em Trans. Amer. Math. Soc.}, {\bf 154} (1971), 377--397.

\bibitem{Bow74} R. Bowen.
\newblock Some systems with unique equilibrium states. 
\newblock {\em Math. Systems Theory}, {\bf 8}, 1974, 193--202.

\bibitem{Bow75} R. Bowen. 
\newblock {\em Equilibrium states and the ergodic theory of Anosov diffeomorphisms}. 
\newblock  Lecture Notes in Mathematics, Vol. 470. Springer-Verlag, Berlin-New York, 1975.

\bibitem{Bra07} R.C. Bradley. 
\newblock {\em Introduction to strong mixing conditions}. Vol. 1--3. 
\newblock Kendrick Press, Heber City, UT, 2007. 

\bibitem{BurDen87} R. Burton, M. Denker. 
\newblock On the central limit theorem for dynamical systems. 
\newblock {\em Trans. Amer. Math. Soc.}, {\bf 302}, 1987, 715--726.

\bibitem{Den89} M. Denker. 
\newblock The central limit theorem for dynamical systems. 
\newblock {\em Dynamical Systems and Ergodic Theory}, ed. K. Krzyzewski. Banach Center Publ.  {\bf 23}, 33--62. Polish Scientific Publ., Warszawa 1989.

\bibitem{DenDuaMcC09} M. Denker, J. Duan, M. McCourt.
\newblock Pseudorandom numbers for conformal measures.
\newblock {\em Dyn. Syst.}, {\bf 24}, 2009, 439--457.

\bibitem{DenGriSig76} M. Denker, C. Grillenberger, K. Sigmund. 
\newblock {\em Ergodic theory on compact spaces}. 
\newblock Lect. Notes in Math. 527. Springer-Verlag 1976.

\bibitem{DenKel83} M. Denker, G. Keller. 
\newblock On U-statistics and v. Mises' statistics for weakly dependent processes.
\newblock {\em Z. Wahrsch. Verw. Gebiete}, {\bf 64}, 1983, 505--522.

%\bibitem{DenKelUrb90} M. Denker, G. Keller, M. Urba\'nski.
%\newblock On the uniqueness of equilibrium states for piecewise monotone mappings. 
%\newblock {\em Studia Mathematica}, {\bf 97}, 1990, 27--36.

\bibitem{DenSenZha17} M. Denker, S. Senti, X. Zhang.
\newblock The Lindeberg theorem for Gibbs-Markov dynamics.
\newblock {\em Nonlinearity}, {\bf 30}, 2017, 4587--4613.

\bibitem{Dou94} P. Doukhan.
\newblock {\em Mixing. Properties and examples}. 
\newblock Lecture Notes in Statistics, 85. Springer-Verlag, New York, 1994.

\bibitem{Gor69} M. Gordin.
\newblock The central limit theorem for stationary processes. (Russian)
\newblock {\em Dokl. Akad. Nauk SSSR}, {\bf188}, 1969, 739--741.

\bibitem{Gou04} S. Gou\"ezel.
\newblock  Central limit theorem and stable laws for intermittent maps.
\newblock {\em Probab. Theory Relat. Fields}, {\bf 128}, 2004, 82--122.

\bibitem{GouMel14}  S. Gou\"ezel, I. Melbourne.
\newblock  Moment bounds and concentration inequalities for slowly mixing dynamical systems.
\newblock {\em Electron. J. Probab}, {\bf 19}, 2014, no. 93, 30 pp.

\bibitem{HayNicVaiZha13} N. Haydn, M. Nicol, S. Vaienti, L. Zhang.
\newblock Central limit theorems for the shrinking target problem. 
\newblock {\em J. Stat. Phys.}, {\bf 153}, 2013, 864--887. 

\bibitem{Hasselblatt2017a} B. Hasselblatt.
\newblock Introduction to hyperbolic dynamics and ergodic theory,
\newblock in {\em Ergodic theory and negative curvature}, Lecture Notes in Math, 2164, 1--124, Springer, Cham, 2017.

\bibitem{HenHer01} H. Hennion, L. Herv\'e.
\newblock {\em Limit theorems for Markov chains and stochastic properties of dynamical systems by quasi-compactness}.
\newblock Lecture Notes in Mathematics, 1766. Springer-Verlag, Berlin, 2001.

\bibitem{IbrLin71} I.A. Ibragimov, J.V. Linnik.
\newblock {\em Independent and stationary sequences of random variables}.
\newblock With a supplementary chapter by I. A. Ibragimov and V. V. Petrov. Translation from the Russian edited by J. F. C. Kingman. Wolters-Noordhoff Publishing, Groningen, 1971. 

\bibitem{Klenke2014} A. Klenke.
\newblock {\em Probability theory}, second edition.
\newblock Universitext. Springer, London, 2014.

\bibitem{KwiLacOpr16} D. Kwietniak, M. {{\L}acka}, P. Oprocha.
\newblock A panorama of specification-like properties and their consequences.
\newblock {\em Dynamics and Numbers}, Contemporary Mathematics, {\bf 669}, 2016, 155--186.

\bibitem{Liv95} C. Liverani. 
\newblock Central limit theorem for deterministic systems,
\newblock in {\em International {C}onference on {D}ynamical {S}ystems}
		({M}ontevideo, 1995), Pitman Res. Notes Math. Ser.,362,
56--75, Longman, Harlow, 1996.

%\bibitem{Man83} R. Ma\~n\'e.
%\newblock On the uniqueness of the maximizing measure for rational maps.
%\newblock {\em Bol. Soc. Brasil. Mat.}, {\bf 14}, 1983, 27--43. 

\bibitem{Man85} R. Ma\~n\'e.
\newblock On the Bernoulli property for rational maps.
\newblock{\em Ergodic Theory Dynam. Systems}, {\bf 5}, 1985,  71--88.

\bibitem{Pet75} V.V. Petrov.
\newblock {\em Sums of independent random variables}.
\newblock Springer, New York,  1975.

\bibitem{Rou83} J. Rousseau-Egele.
\newblock Un th\'eor\`eme de la limite locale pour une classe de transformations dilatantes et monotones par morceaux.
\newblock {\em Ann. Probab.}, {\bf 11}, 1983, 772--788.

\bibitem{Rue92} D. Ruelle.
\newblock Thermodynamic formalism for maps satisfying positive expansiveness and specification.
\newblock {\em Nonlinearity}, {\bf 5}, 1992, 1223--1236.

\bibitem{Sig70} K. Sigmund.
\newblock Generic properties of invariant measures for {A}xiom {${\rm A}$}\ diffeomorphisms.
\newblock {\em Invent. Math.}, {\bf 11}, 1970, 99--109.

\bibitem{Tho14} D. Thomine.
\newblock A generalized central limit theorem in infinite ergodic theory. 
\newblock {\em Probab. Theory Related Fields}, {\bf 158}, 2014, 597--636. 

\bibitem{Tho15} D. Thomine.
\newblock Variations on a central limit theorem in infinite ergodic theory.
\newblock {\em Ergodic Theory Dynam. Systems}, {\bf 35}, 2015, 1610--1657.


\end{thebibliography}
\end{document}